\newtheorem{theorem}{Theorem}
\newtheorem{corollary}[theorem]{Corollary}
\newtheorem{lemma}[theorem]{Lemma}
\newtheorem{prop}[theorem]{Proposition}
\theoremstyle{definition}
\newtheorem{assumption}[theorem]{Assumption}
\newtheorem{remark}[theorem]{Remark}
\newcommand{\ee}{\varepsilon}
\newtheorem{rem}{Remark}
\newcommand{\EE}{\mathbb{E}}
\newcommand{\PP}{\mathbb{P}}
\newcommand{\RR}{\mathbb{R}}
\newcommand{\R}{\mathbb{R}}
\newcommand{\dint}{\mathrm{d}}
\newcommand{\TRIM}{\texttt{TrIM }}
\renewcommand{\hat}[1]{\widehat{#1}}
\newcommand{\edit}[1]{{\textcolor{black}{{#1}}}}
\begin{document}

\begin{frontmatter}

\title{TrIM: Transformed Iterative Mondrian Forests\\ for Gradient-based Dimension Reduction and High-Dimensional Regression}

\runtitle{Transformed Iterative Mondrian Forests}

\begin{aug}
\author[a1,a1b]{\fnms{Ricardo} \snm{Baptista}%
\ead[label=e1]{r.baptista@utoronto.ca}},
\author[a2]{\fnms{Eliza} \snm{O'Reilly}%
\ead[label=e2]{eoreill2@jh.edu}},
\and
\author[a3]{\fnms{Yangxinyu} \snm{Xie}%
\ead[label=e3]{xinyux@wharton.upenn.edu}}

\runauthor{Baptista, O'Reilly, Xie}

\address[a1]{Computing and Mathematical Sciences Department\\
California Institute of Technology\\
}
\address[a1b]{Department of Statistical Sciences\\
University of Toronto\\
\printead*{e1}
}
\address[a2]{Department of Applied Mathematics and Statistics\\
Johns Hopkins University\\
\printead*{e2}
}
\address[a3]{Department of Statistics and Data Science\\
University of Pennsylvania\\
\printead*{e3}
}
\end{aug}  

\begin{abstract}
We propose a computationally efficient 
algorithm for gradient-based linear dimension reduction and high-dimensional regression. 
The algorithm initially computes a Mondrian forest and uses this estimator to identify a relevant feature subspace of the inputs from an estimate of the expected gradient outer product (EGOP) of the regression function.
In addition, we introduce an iterative approach known as Transformed Iterative Mondrian (TrIM) forest to improve the Mondrian forest estimator by using the EGOP estimate to update the set of features and weights used by the Mondrian partitioning mechanism. 
We obtain consistency guarantees and convergence rates for estimating the EGOP matrix 
and the random forest estimator obtained from one iteration of the TrIM algorithm. Lastly, we demonstrate the effectiveness of our proposed algorithm for learning the relevant feature subspace across various settings with both simulated and real data.

\end{abstract}

\begin{keyword}
\kwd{Active subspace} 
\kwd{Dimension reduction}
\kwd{Iterative algorithm}
\kwd{Mondrian forest estimator}
\kwd{Relevant feature subspace}
\end{keyword}

\end{frontmatter}

\def\spacingset#1{\renewcommand{\baselinestretch}
{#1}\small\normalsize}

\section{Introduction}

Building accurate models and extracting information from data with a very large number of covariates is a difficult challenge in many scientific and engineering applications.
Successful machine learning approaches crucially rely on the presence of low-dimensional structure in real-world datasets. Supervised dimension reduction techniques play an important role in addressing the challenges of high-dimensional learning problems. These techniques offer two key benefits: (1) they enhance the explainability of models by identifying a low-dimensional set of features that are most relevant for predicting the response variable; (2) they improve the accuracy and computational efficiency of models in downstream regression and classification tasks by initially mapping the covariates to this low-dimensional relevant feature space. The improved accuracy is particularly important for prediction in high-dimensional settings with limited data.

Many approaches for dimension reduction select a subset of relevant features for prediction from the given set of covariates. These methods, often referred to as feature or variable selection, are generally applied after an initial model has been generated. The selection process involves computing ad-hoc feature importance scores that quantify the influence of each variable on the model output. These scores are then used to determine which variables are most crucial for the model's performance.
\emph{Random forests} \citep{breiman2001random} are a widely used class of prediction algorithms that have demonstrated state-of-the-art empirical performance \citep{fernandez2014we} and are equipped with several feature importance scores including the Mean Decrease in Impurity (MDI) and Mean Decrease in Accuracy (MDA)~\citep{breiman2001random}. %
These scores have increased the popularity of the algorithms in applications where insight into the prediction process is highly valued, but they have well-known biases in favoring features with higher entropy or lower dependence on other features \citep{Strobletal2007Bias,Strobletal2008Bias}. %
There has been significant effort to study and address these drawbacks (see recent work by \citet{Hooker2021,agarwal2023mdiflexiblerandomforestbased,ScornetMDA2022} and the references therein) and several alternative approaches for variable selection using random forests have also been proposed \citep{TreeSHAP2020,ColemanMentch2022,zhu2015reinforcement,deng2022towards}. %

Currently, all of the tree-based feature importance measures are limited to evaluating the relevance of a single covariate at a time for predicting the output. The corresponding theoretical results showing that such dimension reduction techniques allow the model to overcome the curse of dimensionality rely on the assumption that the regression model is \emph{sparse}, that is, the output only varies along a small subset of the input dimensions. %
\edit{Instead, a more flexible \emph{multi-index model} assumes that the output may depend on \emph{all} input coordinates, but only varies along the span of a general low-dimensional subspace of the inputs called the \emph{relevant feature subspace}.
In this paper, we study a new approach for dimension reduction and regression that is designed to adapt to the low-dimensional structure captured by this model.} 
The algorithm combines Mondrian random forests with a linear dimension reduction mechanism based on estimating the expected gradient outer product of the function, which captures the relevant feature subspace. 

Mondrian random forests \citep{lakshminarayanan2014mondrian,balog2016mondrian} are a computationally efficient random forest variant that is generated from a random hierarchical partitioning process called the \emph{Mondrian process} \citep{roy2008mondrian}. They were introduced to efficiently handle the online setting where data arrives over time and the estimator is updated accordingly. The partitioning mechanism is also particularly amenable to theoretical analysis. In fact, Mondrian random forests and their generalizations have so far been the only random forest estimators shown to obtain minimax optimal rates of convergence for regression in arbitrary dimension~\citep{mourtada2020minimax, OReillyTran2021minimax, KlusowskiMondrian2023}. The drawback of this algorithm is that the partitioning mechanism is not data-adaptive, and thus the performance suffers in high-dimensional feature spaces as compared to more data-driven random forest algorithms. %

In light of these challenges, we propose a new algorithm called \emph{Transformed Iterative Mondrian} (\texttt{TrIM}) forests that incorporates data adaptivity into the Mondrian process through an iterative two-step procedure. First, a set of features made up of linear combinations of the covariates are selected in a data-driven way. Second, a random forest estimator is built using a Mondrian process to hierarchically split the dataset along these selected features. This approach mitigates increased computational costs as the second step retains the efficiency of the Mondrian partitioning method, as well as allowing one to import much of the existing asymptotic theory for Mondrian forests and their generalizations. %

More specifically, %
the first step of our algorithm is to estimate the expected gradient outer product (EGOP) of the regression function. Under the assumption that the regression model is a ridge function, this matrix is a projection onto the relevant feature subspace. %
We will use an efficient difference quotient approximation as proposed in \cite{Trivedietal2014} to estimate this matrix. This approximation requires an initial estimate of the true regression function, which is a standard Mondrian forest estimator in our algorithm. The advantage of using Mondrian forests for this initial estimate is twofold. They are more computationally efficient than kernel estimates when the data set is large \citep{lakshminarayanan2016mondrian}, and they attain minimax optimal risk bounds. %
Our first theoretical result in Theorem \ref{t:Hn_approx_rate} shows consistency and a convergence rate of our EGOP estimator. The rate depends on properly scaling the parameters of the Mondrian forest, as well as a step size parameter from the difference quotient approximation, with the amount of data. 

The second step of our algorithm applies the estimated EGOP to the training inputs and computes a Mondrian forest using this transformed dataset. This step implicitly generates a hierarchical partition of the input space via a \emph{stable under iteration (STIT) process} \citep{Nagel2005} that divides the data along a set of features that are linear combinations of the original covariates. %
The resulting estimator is a particular kind of random tessellation forest \citep{TehRTFs2019, OReillyTran2021} that we will refer to as an \emph{oblique Mondrian forest}.
\citet{OReilly_ObliqueMondrian} recently obtained convergence rates of such estimators illustrating their robustness to error in estimating the projection onto the relevant feature subspace when the underlying regression function is a multi-index model. Combining these results with the error estimate of the EGOP gives our second main theoretical result in Theorem \ref{thm:conv_rate} that shows consistency and a convergence rate for the estimator obtained from one iteration of the \TRIM algorithm.%

In this work, we also present simulation and real data experiments that demonstrate the effectiveness of our proposed algorithm. Our results show that \TRIM is able to learn the relevant feature subspace and improve the accuracy of the Mondrian forest regression model across a variety of settings. The code to reproduce the numerical results can be found at~\href{https://github.com/Xieyangxinyu/TrIM}{\url{github.com/Xieyangxinyu/TrIM}}. %

\subsection{Related Work}

There has been a lot of attention in the statistical literature on learning the relevant feature subspace for a multi-index model. These methods are collectively known as sufficient dimension reduction, and we refer to \citet{CookLecture2007} and \citet{LiBook2017} and the references therein for a summary of this literature. 
We highlight a subset of these linear dimension reduction methods called inverse regression, which broadly aims to estimate the relevant feature subspace by approximating functionals of the covariates conditioned on the response, i.e. $X | Y$. These techniques include Sliced Inverse Regression (SIR) \citep{SIR_Li1991, SIR_Lin2018} %
and Sliced Average Variance
Estimation (SAVE) \citep{dennis2000save}. %
Related to these methods, \citet{ma2012semiparametric} showed how to construct semi-parametric estimators and obtain consistent estimators directly for the subspace. 
\cite{loyal2022dimension} used SIR and SAVE to select linear combinations of covariates as inputs for \edit{Breiman's} random forest algorithm and empirically compares the performances. In Section \ref{sec:simulation} we perform numerical experiments comparing our approach to those from \cite{loyal2022dimension}. A drawback of these inverse regression techniques is that the number of relevant features is not known \emph{a priori}, and so one may miss any relevant features by selecting a set of directions that does not have the ability the span the relevant feature subspace. Another set of approaches use non-parametric methods to approximate gradients of the function and identify the relevant subspace~\citep{Xia2002MAVE, Trivedietal2014}, which trace back to~\cite{li1992principal}. We note that most approaches do not account for the subsequent regression step as we do here. A related work that accounts for errors in both the relevant subspace and the regression step is in~\citet{liu2024learning}. The %
regression relies on piece-wise polynomial approximations, which result in a convergence rate that only depends on the dimension of the central subspace, but \edit{exact knowledge of a variance quantity is assumed and it} can be challenging to use in practice.

A parallel line of research in the computational modeling literature focuses on high-dimensional function approximation %
\edit{under a multi-index model assumption. In this literature, the linear dimension reduction model is referred to as a \emph{ridge function} and} the relevant feature subspace has been coined the \emph{active subspace}~\citep{constantine2014active}. These methods identify the active subspace using the EGOP matrix as in our proposed algorithm. In particular, they extract the subspace components from the dominant eigenvectors of the EGOP matrix. A number of theoretical guarantees have been obtained in this literature for recovery of the active subspace, but the results assume access to gradient evaluations of the function. %
In particular,~\citet{zahm2020gradient} derived a bound for the approximation error of vector-valued functions based on the eigenvalues of the EGOP matrix. We do not assume access to such gradient measurements in this work. Instead, we perform regression and learn the underlying low-dimensional subspace from paired input-output samples of the function alone. Moreover, we do not require solving an eigenvalue problem, which may be costly in high-dimensional settings.%

Finally, there have been a few other works that consider an iterative approach of a feature learning or dimension reduction step combined with a subsequent model for prediction. For instance, given a set of importance scores for each feature, one can try to improve the performance of a random forest by only using the features with large importance scores. To avoid removing features that may have some influence even if the importance score is low, one can also use the scores to \emph{reweight} the distribution used to select a set of features along which to make splits in each tree and iterate this procedure by using the reweighted random forest to compute new feature importance scores. This is the mechanism for \emph{iterative random forests} proposed by \citet{basu2018iterative}. We consider a variant of that approach in Section \ref{sec:weighted_mondrian}, where we use a gradient-based feature importance score to reweight the distribution over the covariate dimensions in the Mondrian process. Of course, this approach is limited to updating feature importance scores of the covariates rather than more general features made of linear combinations of covariates. Iterative algorithms that alternatively estimate the EGOP and a regression function \edit{using} kernel machines have been studied as well \citep{radhakrishnan2023mechanismfeaturelearningdeep} where the EGOP and the model are kernel estimators. Neither work has the corresponding statistical guarantees and consistency rates we obtain here for the \TRIM algorithm.

\subsection{Outline} The remainder of this paper is organized as follows. Section~\ref{sec:regression} presents the structured regression problem and the general form of the random forest and EGOP estimator we will use to learn the low-dimensional subspace and the regressor. Section~\ref{sec:methodology} introduces our full algorithm built from Mondrian processes and linear transformations of the data called Transformed Iterative Mondrian (\texttt{TrIM}) forests. Section~\ref{sec:theory} presents the consistency guarantees and Section~\ref{sec:simulation} shows the experimental results on small and large-scale examples. 

\section{Regression setting and notation} \label{sec:regression}

Suppose a data set $\mathcal{D}_n := \{(X_1, Y_1), \ldots, (X_n,Y_n)\}$ is given consisting of $n$ i.i.d. samples from a random pair $(X,Y) \in \RR^d \times \RR$ such that $\EE[Y^2] < \infty$.  Let $\mu$ denote the unknown distribution of $X$ and assume
\begin{align}\label{e:model1}
Y = f(X) + \ee,
\end{align}
for some unknown function $f: \RR^d \to \RR$ and noise $\ee$ satisfying $\EE[\ee|X] = 0$ and $\mathrm{Var}(\ee|X) = \sigma^2 < \infty$ almost surely. The goal is to estimate the \edit{regressor} $f(x) = \EE[Y | X = x]$, i.e., the conditional mean for the output. We make the additional assumption that the function $f$ has low-dimensional structure as follows.

\begin{assumption} \label{assumption:f_ridge} The function $f$ is of the form
\begin{align}\label{e:multi-index_model}
f(x) = g(Bx), \quad x \in \RR^d,
\end{align}
where $g\colon \RR^s \to \RR$ and $B \in \RR^{s \times d}$ for $1 \leq s \leq d$.    
\end{assumption} 
This dimension reduction model is commonly referred to as a \emph{multi-index model} or a \emph{ridge function}~\citep{pinkus2015ridge, hastie2009elements}. This assumption implies that the regression function depends only on the inputs $\langle b_1, x \rangle, \ldots, \langle b_s, x \rangle$, where $\{b_i\}_{i=1}^s$ are the rows of $B$. The subspace $S \coloneqq \mathrm{span}(\{b_i\}_{i=1}^s)$ will be called the associated \emph{relevant feature subspace} of the model. %
Our goal in this work is to estimate a function of the form in~\eqref{e:multi-index_model} by simultaneously estimating $f$ and $B$. Our estimators for $f$ based on random forest models are described in Section~\ref{sec:random_forest_regression}. We will identify $B$ using an EGOP matrix estimate, also known as a diagnostic matrix from the active subspace literature, described in Section~\ref{sec:EGOP}. \edit{While estimating the matrix $B$ is not identifiable for a given multi-index model, our goal is to identify a low-dimensional subspace with $s \ll d$ and corresponding $g$ that results in an accurate estimator for $f$.}

\subsection{Random forest regression} \label{sec:random_forest_regression}

The approach we will take for estimating $f$ is to build a \emph{random forest estimator} from a random partition of the input space and the data set $\mathcal{D}_n$. Let $\mathcal{P}$ be a random partition of $\RR^d$. The regression tree estimator based on $\mathcal{P}$ is
\begin{align}\label{e:tree}
    \hat{f}_n(x, \mathcal{P}) := \sum_{i=1}^n \frac{1_{\{X_i \in Z_x\}}}{\mathcal{N}_n(x)}Y_i,
\end{align}
where $Z_x$ is the cell of $\mathcal{P}$ that contains $x$ and $\mathcal{N}_n(x) := \sum_{i=1}^n 1_{\{X_i \in Z_x\}}$ is the number of points in $Z_x$. If $\mathcal{N}_n(x) = 0$, then it is assumed that $\hat{f}_n(x, \mathcal{P}) = 0$. The random forest estimator based on $\mathcal{P}$ is defined by averaging $M$ i.i.d. copies of the tree estimator, i.e.
\begin{align}\label{e:forest}
    \hat{f}_{n,M}(x) := \frac{1}{M} \sum_{m=1}^M \hat{f}_n(x, \mathcal{P}_m),
\end{align}
where $\mathcal{P}_1, \ldots, \mathcal{P}_M$ are $M$ i.i.d. copies of $\mathcal{P}$. The performance and asymptotic properties of this estimator depends on how the partition $\mathcal{P}$ is generated.

\subsection{Estimation of the expected gradient outer product}\label{sec:EGOP}

The first step of our algorithm will be a feature learning step that computes a linear transformation approximating a projection onto the relevant feature subspace $S$. We use a gradient-based approach as described in the introduction, and begin by defining the \emph{expected gradient outer product} (EGOP) matrix
\begin{align}\label{e:EGOP}
H = \EE[\nabla f(X) \nabla f(X)^T].
\end{align}
This matrix $H \in \R^{d \times d}$ \edit{corresponds to the second moment of the random variable $\nabla f(X)$. %
Thus, the projection $v^THv$ along any input direction $v \in \R^d$ measures the variability of the gradient of $f$ with respect to this direction.} If the variability is small along $v$, then $f$ can be interpreted having a nearly constant gradient along $v$ on average over the support of $X$.

If we suppose $f$ satisfies Assumption~\ref{assumption:f_ridge} for $B \in \RR^{s \times d}$, then $\nabla f(x) = B^T \nabla g(Bx)$, and
\[H = B^T\EE[\nabla g(BX) \nabla g(BX)^T]B.\]
Thus, the rank of $H$ is at most $s$, and the image of $H$ is contained in the relevant feature subspace $S$. \edit{Additionally, under mild assumptions (Assumption \ref{assump:f} and Assumption \ref{assump:mu} below are sufficient), the image is exactly the relevant feature subspace \citep{WuGradients2010}.} This motivates constructing the expected gradient outer product matrix to identify $S$. 
Without knowledge of the functional form of $f$ or the distribution of $X$, however, one must proceed by obtaining an approximation of $H$. 
In our algorithm, we will use the approximation proposed in \cite{Trivedietal2014} using the second-order difference quotient approximations of the derivatives. %

Given the data set $\mathcal{D}_n$ and an initial \edit{Mondrian forest} estimator $\hat{f}_{n}$ of $f$, we first consider for each $j \in [d]$ the second-order difference quotient estimator $\triangle_{j,t}\hat{f}_{n}(x)$ of $\partial_j f(x)$ for a step size $t > 0$. \edit{To define this, let $\{e_j\}_{j \in [d]}$ denote the standard basis vectors. Then, }for a function $h: \RR^d \to \RR$,
\begin{align}
 \triangle_{j,t}h(x) := \frac{h(x + te_j) - h(x - te_j)}{2t}
\end{align}
is the second-order difference quotient approximation to the derivative of $h$ at $x$. We then define the vector $\hat{\nabla}_t\hat{f}_n(x) \in \RR^d$ for $x \in \RR^d$ which is an approximation of the gradient vector $\nabla f(x)$ given step size $t$. For each $j = 1,\ldots d$, we define
\begin{align}\label{e:grad_approx}
(\hat{\nabla}_t\hat{f}_n(x))_j := \triangle_{j,t}\hat{f}_{n}(x)\mathbbm{1}_{E_{n,t,j}(x)},
\end{align}
where $E_{n,t, j}(x)$ is the event that the cells containing $x + te_j$ and $x - te_j$ both contain training data. Finally, let $\mathcal{X}_n = \{x_i\}_{i=1}^n$ be i.i.d. samples of the input random variable $X$ that are independent of $\mathcal{D}_n$.
The empirical approximation of $H$ is then given by the matrix 
\begin{align}\label{e:H_approx}
\hat{H}_{n,t} := \frac{1}{n} \sum_{i=1}^n \hat{\nabla}_t \hat{f}_n(x_i) \hat{\nabla}_t\hat{f}_n(x_i)^T.
\end{align}
We use an independent sample of inputs $\mathcal{X}_n$ in~\eqref{e:H_approx} for estimating the expectation with respect to $X$ above in order to prove theoretical guarantees for the convergence of the estimator. In practice, one could hold out a collection of inputs from the data sample to use for the evaluation of the expected gradient outer product matrix. However, if data is limited this may not be practical. The algorithm we describe below and implement in the experiments reuses the input samples that were used to build the Mondrian forest estimator to compute~\eqref{e:H_approx}.

\section{TrIM: Transformed Iterative Mondrian Forests} \label{sec:methodology}

The approach we propose in this paper to estimate the regressor $f(x) = \EE[Y | X = x]$ is to combine a feature learning step consisting of a \edit{data-driven} choice of linear transformation of the input data, and the computation of a Mondrian forest estimator. Mondrian forests \citep{lakshminarayanan2014mondrian,lakshminarayanan2016mondrian, balog2016mondrian} belong to a computationally efficient class of random forest estimators, obtained as in \eqref{e:forest} where the random partition $\mathcal{P}$ is generated using a \emph{Mondrian process}. This stochastic process is parameterized by a \emph{lifetime} $\lambda > 0$, which determines the complexity of the random partition. 

The construction of a Mondrian tree with lifetime $\lambda$ restricted to an axis-aligned box in $\RR^d$ is shown in Algorithm~\ref{a:mondrian}. The Mondrian tree generating process is a recursive algorithm that partitions the input data into smaller and smaller blocks, where the partitioning is determined by the Mondrian process. The process is defined by a set of independent exponential clocks, one for each dimension of the input space. The first clock to ring determines the dimension of the split, and the point of the split is chosen uniformly at random from the range of the data along that dimension. The process is terminated when the first clock to ring has a time greater than the lifetime $\lambda$. An illustration of a Mondrian tree is shown in Figure \ref{fig:Mondrian_tree}. 

\begin{figure}[hbt!]
    \centering
    \centering
\begin{tikzpicture}[scale=0.8]
   
     \filldraw[color=black, fill=red!40, very thick](-1,1) rectangle (4, 6);

     \filldraw[color=black, fill=green!40, very thick](2,1) rectangle (4, 6 );
     \filldraw[color=black, fill=white, thick]  (2,3) circle (8pt);
     \draw[minimum size=5mm]   (2,3)  node {\footnotesize .8};

    \filldraw[color=black, fill=blue!40, very thick](-1,2) rectangle (2, 6);
     \filldraw[color=black, fill=white, thick]  (0,2) circle (8pt);
     \draw[minimum size=5mm]   (0,2)  node {\footnotesize 1.7};
     
     \filldraw[color=black, fill=white, thick]  (2,3) circle (8pt);
     \draw[minimum size=5mm]   (2,3)  node {\footnotesize.8};

     \filldraw[color=black, fill=magenta!40, very thick](2,4) rectangle (4, 6 );
     \filldraw[color=black, fill=white, thick]  (3,4) circle (8pt);
        \draw[minimum size=5mm]   (3,4)  node {2.2};
     
     \filldraw[color=black, fill=white, thick]  (0,2) circle (8pt);
     \draw[minimum size=5mm]   (0,2)  node {\footnotesize 1.7};
     \filldraw[color=black, fill=white, thick]  (2,3) circle (8pt);
     \draw[minimum size=5mm]   (2,3)  node {\footnotesize .8};

     \filldraw[color=black, fill=yellow!40, very thick](-1,4.5) rectangle (2, 6);

\filldraw[color=black, fill=white, thick]  (0,4.5) circle (9pt);
\draw[minimum size=5mm]   (0,4.5)  node {\footnotesize 3.1};
\filldraw[color=black, fill=white, thick]  (3,4) circle (9pt);
        \draw[minimum size=5mm]   (3,4)  node {\footnotesize 2.2};
     
     \filldraw[color=black, fill=white, thick]  (0,2) circle (9pt);
     \draw[minimum size=5mm]   (0,2)  node {\footnotesize 1.7};
     \filldraw[color=black, fill=white, thick]  (2,3) circle (9pt);
     \draw[minimum size=5mm]   (2,3)  node {\footnotesize .8};

\end{tikzpicture} \hspace{.2in}
\begin{tikzpicture}[scale=0.9]
 \draw[->, thick] (3,4.8) -- (3,.5);

 \draw[gray, thick] (0,4.8) -- (0,4);
 \draw[gray, thick] (0,4) -- (-1,3.1);
 \draw[gray, thick] (0,4) -- (1,2.6);
 \draw[gray, thick] (-1,3.1) -- (-1.6,.8);
 \draw[gray, thick] (-1,3.1) -- (-.3,1.7);
  \draw[gray, thick] (-.6,.8) -- (-.3,1.7);
  \draw[gray, thick] (.1,.8) -- (-.3,1.7);
    \draw[gray, thick] (.6,.8) -- (1,2.6);
  \draw[gray, thick] (1.6,.8) -- (1,2.6);
 
 \draw[dotted, gray, thick] (-1,3.1) -- (3.1,3.1);
 \draw[dotted, gray, thick] (0,4.8) -- (3.1,4.8);
 \draw[dotted, gray, thick] (0,4) -- (3.1,4);
 \draw[dotted, gray, thick] (1,2.6) -- (3.1,2.6);
   \draw[dotted, gray, thick] (-.3,1.7) -- (3, 1.7);
  \draw[dashed, gray, thick] (-1.6,.8) -- (3, .8);

 \draw  (3.3,4.8) node { \footnotesize 0};
 \draw  (3.3,4) node { \footnotesize .8};
 \draw  (3.4,3.1) node { \footnotesize 1.7};
 \draw  (3.4,2.6) node {\footnotesize 2.2};
 \draw  (3.4,1.7) node {\footnotesize 3.1};
 \draw  (3.6,.8) node {\footnotesize $\lambda = 4$};
 
 \draw[gray, thick] (2.9,4.8) -- (3.1,4.8);
 \draw[gray, thick] (2.9,4) -- (3.1,4);
 \draw[gray, thick] (2.9,3.1) -- (3.1,3.1);
 \draw[gray, thick] (2.9,2.6) -- (3.1,2.6);
 \draw[gray, thick] (2.9,1.7) -- (3.1,1.7);
 \draw[gray, thick] (2.9,.8) -- (3.1,.8);

\filldraw[black] (0,4.8) circle (2pt);
\filldraw[black] (0,4) circle (2pt);
\filldraw[black] (-1,3.1) circle (2pt);
\filldraw[black] (1,2.6) circle (2pt);
\filldraw[black]  (-.3,1.7) circle (2pt);

\filldraw[color=black, fill=red!40, thick] (-1.6,.8) circle (2pt);
\filldraw[color=black, fill=blue!40, thick]  (-.6,.8) circle (2pt);
\filldraw[color=black, fill=yellow!40, thick]  (.1,.8) circle (2pt);
\filldraw[color=black, fill=green!40, thick]  (.6,.8) circle (2pt);
\filldraw[color=black, fill=magenta!40, thick]  (1.6,.8) circle (2pt);
 
\end{tikzpicture}
    \vspace{-0.8cm}
    \caption{An illustration of the axis-aligned hierarchical partition generated by a Mondrian process with lifetime $\lambda = 4$ and the associated tree embedded on a vertical time axis.} 
    \label{fig:Mondrian_tree}
\end{figure}

\begin{algorithm}[hbt!]
\SetAlgoLined
\DontPrintSemicolon
\KwIn{A dataset $X$, lifetime $\lambda$, and dimension $d$}
\KwOut{A partition of $X$ into subsets based on the Mondrian partitioning process}
\BlankLine
\For{each dimension $c \in [d]$}{
  Compute the lower bound $l_c$ and upper bound $u_c$ of $X$ along dimension $c$\;
  Associate an independent exponential clock with rate $|u_c - l_c|$ to dimension $c$\;
}
\BlankLine
$T \gets$ time of the first clock to ring\;
\edit{$c \gets$} dimension of the clock that rang first\;
\If{$T > \lambda$}{
  \Return{Terminate the process}\;
}
\BlankLine
Choose a point $\xi$ uniformly at random from $[l_c, u_c]$\;
Split $X$ into $X_{<} = \{x \in X | x_c < \xi\}$ and $X_{\ge} = \{x \in X | x_c \ge \xi\}$ with a hyperplane perpendicular to $X_c$ at point $\xi$\;
\BlankLine
Discard the remaining $d - 1$ clocks\;
Recursively call the Mondrian Tree Generating Process with parameter ($X_{<}, \lambda, d$)\;
Recursively call the Mondrian Tree Generating Process with parameter ($X_{\ge}, \lambda, d$)\;
\caption{Mondrian Tree Generating Process \citep{roy2008mondrian}}
\label{a:mondrian}
\end{algorithm}

\begin{figure}
    \centering
    \includegraphics[width=0.9\textwidth]{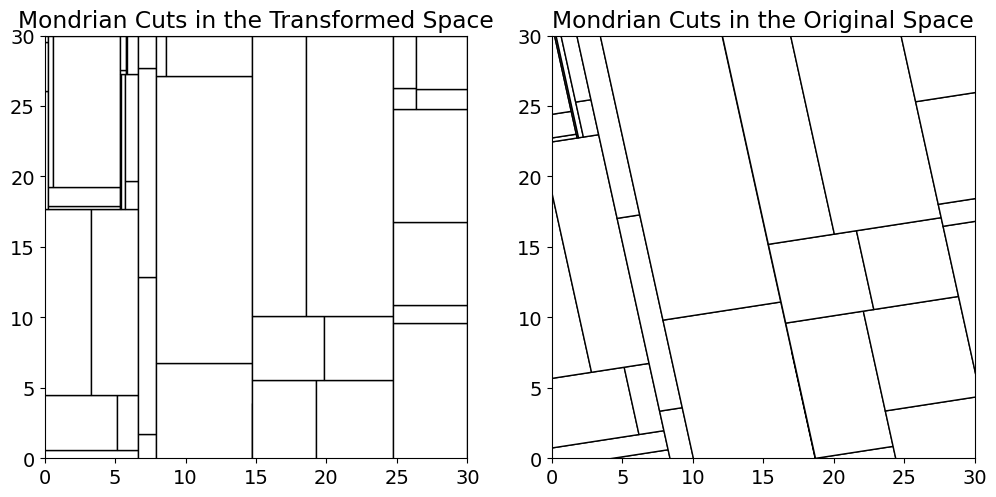}
    \caption{The original 2-dimensional Euclidean space is transformed by rotating by 10 degrees clockwise and \edit{scaling}  the x-axis by 0.9 and y-axis by 0.8. In other words, the original space is transformed by the matrix $A = \begin{bmatrix}
    0.8 \cos(10^\circ) & -0.7 \sin(10^\circ) \\
    \sin(10^\circ) & 0.8 \cos(10^\circ)
    \end{bmatrix}$. The Mondrian cuts are made in the transformed space and then transformed back to the original space by applying $A^{-1}$. 
    Left: The Mondrian cuts in the transformed space. Right: The Mondrian cuts in the original space. }
    \label{fig:Mondrian_cuts_transformed_space}
\end{figure}

As is clear from the construction, an important hyperparameter for Mondrian forests is the \emph{lifetime} parameter $\lambda$, which determines how long the Mondrian partitioning process is run. The choice of this parameter determines the complexity of the estimator and thus it should depend on the amount of data available. This has been observed theoretically by \citet{mourtada2020minimax}, \citet{OReillyTran2021minimax} and \citet{OReilly_ObliqueMondrian}, where minimax rates of convergence for Mondrian forests and their generalizations are obtained by precisely tuning the lifetime parameter with the amount of data points $n$. The significance of this parameter will be highlighted in the forthcoming theoretical results and numerical experiments. \edit{In particular, convergence guarantees rely on scaling this parameter appropriately with the amount of data, see Theorems \ref{t:Hn_approx_rate} and \ref{thm:conv_rate}.} 

The Mondrian process has many appealing computational properties such as the Markov property and spatial consistency, which make Algorithm~\ref{a:mondrian} amenable to efficient online algorithms. \edit{For instance, spatial consistency implies the distribution of a Mondrian partition on a given window has the same distribution as the Mondrian partition on a larger window, restricted to the original one, enabling efficient algorithms for extending a Mondrian tree to new data \citep{lakshminarayanan2014mondrian}.} The major drawback, as described in the introduction, is that the process is not data-adaptive. This shortcoming becomes particularly problematic in high-dimensional settings where learning from features is essential for achieving good performance. To address this, we propose a novel two-phase approach. First, we apply a data-adaptive linear transformation to the input data; then, we compute a Mondrian forest on the transformed data. We first dive into the details of the first step, which can be interpreted as a generalized form of feature selection: creating a new set of features based on linear combinations of covariates in the original input space. To compute the linear transformation matrix, we first train a Mondrian forest estimator $\hat{f}_n$ on the data set $\mathcal{D}_n$ with lifetime $\lambda$ using Algorithm~\ref{a:mondrian}. We then compute the approximation $\hat{H}_{n,t}$ as in \eqref{e:H_approx} to the expected gradient outer product $H$ by approximating the gradient vector at each point in the data set and taking an average of outer products of the gradient vectors. The pseudocode for computing $\hat{H}_{n,t}$ is shown in Algorithm~\ref{a:estimate H}. In Theorem \ref{t:Hn_approx_rate} of Section \ref{sec:theory}, we prove consistency of this estimator for $H$ and obtain a convergence rate for the estimation of \edit{multi-index models} given a particular scaling of the parameters $\lambda$ and $t$ with respect to the amount of data $n$. 

\begin{algorithm}[hbt!]
\SetAlgoLined
\DontPrintSemicolon
\KwIn{$\mathcal{D}_n = \{(x_i,y_i)\}_{i=1}^n \subset \RR^d \times \RR$; step size $t > 0$; lifetime $\lambda > 0$; a (transformed) Mondrian forest estimator $\hat{f}_{n, \lambda}$ on $\mathcal{D}_n$}
\KwOut{Estimated gradient outer product matrix $\hat{H}_{n,t}$}
\BlankLine

\For{each point $(x,y) \in \mathcal{D}_n$}{
        \For{each dimension $j = 1$ \KwTo $d$}{
            Compute the approximation of the $j$-th component of the gradient vector at $x$:
            \[ (\hat{\nabla}\hat{f}_{n,\lambda}(x))_j := \frac{\hat{f}_{n,\lambda}(x + te_j) - \hat{f}_{n,\lambda}(x - te_j)}{2t} \]
        }
    }
\BlankLine

Compute the matrix $\hat{H}_{n,t}$ as an average of outer products of the gradient vectors:
    \[ \hat{H}_{n,t} := \frac{1}{n} \sum_{i=1}^n \hat{\nabla}_t \hat{f}_{n, \lambda}(x_i) \hat{\nabla}_t\hat{f}_{n,\lambda}(x_i)^T \]
\caption{Pseudocode for computing $\hat{H}_{n,t}$}
\label{a:estimate H}    
\end{algorithm}

Before applying the linear transformation, we normalize the matrix $\hat{H}_{n,t}$ to avoid adjusting the lifetime parameter $\lambda$. Specifically, we compute the normalized matrix 
\begin{align}\label{e:An}
A_n = d \cdot \hat{H}_{n,t}/\| \hat{H}_{n,t} \|_{2,1}
\end{align}
\edit{when $\| \hat{H}_{n,t} \|_{2,1} \neq 0$,} where the $L_{2,1}$ norm $\| \cdot \|_{2,1}$ is given by the sum of the Euclidean norms of the columns of the matrix, i.e., $\|A\|_{2,1} = \sum_{j=1}^d \|A_{\cdot, j}\|_2$. \edit{If $\| \hat{H}_{n,t} \|_{2,1} = 0$, we set $A_n = 0$.} %
Once we have computed the linear transformation $A_n$, we transform the data set $\mathcal{D}_n$ by applying $A_n$ to each point. We then compute a Mondrian forest estimator using the transformed data set $\mathcal{D}_n^{+} = \{(A_n x_i, y_i)\}_{i=1}^n$ with lifetime $\lambda$, which implicitly computes an oblique Mondrian forest estimator $\hat{f}_{n,\lambda}^+$. \edit{The normalization in \eqref{e:An} by the $L_{2,1}$ norm ensures that an oblique Mondrian forest estimator with lifetime $\lambda$ trained on the original data set $\mathcal{D}_n$ gives the same estimator (in distribution) as training a standard Mondrian forest with \emph{the same lifetime} $\lambda$ on the transformed data set $\mathcal{D}_n^{+} = \{(A_n x_i, y_i)\}_{i=1}^n$; see Lemma \ref{l:fn_is_STIT}.} 

Algorithm \ref{a:estimator} presents pseudocode describing this procedure. %
With appropriate scaling of $\lambda$ and $t$ with respect to the amount of data $n$, Theorem \ref{thm:conv_rate} in Section \ref{sec:theory} proves the consistency of this estimator \edit{and provides a convergence rate} as $n$ goes to infinity.

\begin{algorithm}[hbt!]
\SetAlgoLined
\DontPrintSemicolon
\KwIn{$\mathcal{D}_n = \{(x_i,y_i)\}_{i=1}^n \subset \RR^d \times \RR$; lifetime parameter $\lambda > 0$; number of trees $M$; transformation matrix $\hat{H}_{n,t}$}
\KwOut{Mondrian forest estimator $\hat f^+_{n,\lambda}$}
\BlankLine

Compute the normalized matrix
\[
A_n = d \cdot \frac{\hat{H}_{n,t}}{\| \hat{H}_{n,t} \|_{2,1}}
\]
\BlankLine
Transform data set $\mathcal{D}_n$: for each $i = 1, \ldots, n$,  $x_i \mapsto A_n x_i$; i.e. Set $\mathcal{D}_n^{+} = \{(A_n x_i, y_i)\}_{i=1}^n$\;
Compute Mondrian forest estimator $\hat{f}_{n,\lambda}$ consisting of $M$ Mondrian trees with lifetime $\lambda$ on the transformed dataset $\mathcal{D}_n^{+}$\ using Algorithm \ref{a:mondrian}\;
Return $\hat f^+_{n,\lambda}$ where $\hat f^+_{n,\lambda}(x) := \hat f_{n,\lambda}(A_nx)$
\caption{Pseudocode for computing estimator $\hat{f}^+_{n,\lambda}$}
\label{a:estimator}
\end{algorithm}

We can extend this approach by iteratively updating the linear transformation $\hat{H}_{n,t}$ and the Mondrian forest estimator $\hat{f}_{n,\lambda}$ to improve the performance of the estimator until some stopping criterion is met. This leads to our overall algorithm, which we call the \emph{Transformed Iterative Mondrian (\texttt{TrIM}) Forest}. The pseudocode for \TRIM is shown in Algorithm \ref{a:TMFE}.

\begin{algorithm}[hbt!]
\SetAlgoLined
\DontPrintSemicolon
\KwIn{$\mathcal{D}_n = \{(x_i,y_i)\}_{i=1}^n \subset \RR^d \times \RR$; lifetime parameter $\lambda > 0$; number of iterations $K$; step-size $t > 0$}
\KwOut{Oblique Mondrian forest estimator $\hat{f}^+_{n,\lambda}$}
\BlankLine
$\hat{H}_{n,t} \leftarrow I_d$\;
\For{$k = 1$ \KwTo $K$}{
    Given $\mathcal{D}_n, \lambda$ and $\hat{H}_{n,t}$, compute the transformed  Mondrian forest estimator $\hat{f}^+_{n,\lambda}$ using Algorithm \ref{a:estimator}\;
    Compute the linear transformation matrix $\hat{H}^{+}_{n,t}$ using Algorithm \ref{a:estimate H} with input $\mathcal{D}_n, t, \lambda, \hat{f}^+_{n,\lambda}$\;
    Set $\hat{H}_{n,t} \leftarrow \hat{H}^{+}_{n,t}$\;
}
\caption{Transformed Iterative Mondrian (\texttt{TrIM}) Forest}
\label{a:TMFE}
\end{algorithm}

\section{Theoretical Results}\label{sec:theory}

In this section, we provide theoretical justification for our algorithm by providing a rate of convergence for the expected error of the estimated EGOP matrix (see Theorem~\ref{t:Hn_approx_rate}) as well as for an \edit{\emph{honest} version} of the subsequent oblique Mondrian estimator (see Theorem~\ref{thm:conv_rate}). \edit{In particular, we assume that the data used to built the EGOP estimate has been split from, and is thus independent of, the data used to generate estimates within each cell of a tree. This implies the forests satisfy the honesty condition introduced in earlier work on the statistical analysis of random forests \citep{wager2018estimation}.} 

We will make the following assumptions about the distribution $\mu$ of the input random variable. First recall that a function $f: \RR^d \to \RR$ is $L$-Lipschitz if $|f(x) - f(y)| \leq L\|x - y\|_2$ for all $x, y \in \RR^d$.

\begin{assumption}\label{assump:mu}
Assume the distribution $\mu$ \edit{is supported on the unit cube $[0,1]^d \subset \RR^d$ and has }%
a density $p$ that is $C_p$-Lipschitz and strictly positive on \edit{$[0,1]^d$}.%
\end{assumption}

We will also assume the regression function satisfies the following regularity condition.

\begin{assumption}\label{assump:f}
Assume $f: \edit{\RR^d} \to \RR$ satisfies, for some finite $L > 0$ \edit{and $\beta \in (0,1]$},
\[\|\nabla f(x) - \nabla f(y)\|_2 \leq L\|x - y\|_2^{\edit{\beta}} \text{  and  } \|\nabla f(x)\|_2 \leq L, \text{ for all } x,y \in \edit{\RR^d}.\]
\end{assumption}

Our first main result is the following asymptotic error bound for the estimator $\hat{H}_{n,t}$ of the EGOP $H$ defined in \eqref{e:H_approx} as the amount of data $n$ grows. Recall the estimator depends on the lifetime parameter $\lambda$ of the initial Mondrian forest estimator and the stepsize parameter $t$ in the difference quotient approximation. To obtain consistency of this estimator, the lifetime must grow to infinity, and the stepsize decay to zero appropriately with $n$. Throughout, $\|\cdot\|$ will denote the \edit{$\ell_2 - \ell_2$} operator norm of a matrix and $\gtrsim$ denotes a lower bound on the asymptotic scaling.

\begin{theorem}\label{t:Hn_approx_rate}
Let $\mu$ and $f$ satisfy Assumptions \ref{assump:mu} and \ref{assump:f}. Let $\hat{H}_n = \hat{H}_{n, t_n}$ be the estimator of $H$ as in \eqref{e:H_approx} built from a Mondrian forest estimator $\hat{f}_n$ with lifetime $\lambda_n$ and number of trees $M_n$. \edit{For $\beta \leq \frac{1}{2}$,   letting
$\lambda_n \sim n^{\frac{1}{d + 2+2\beta}}$, $M_n \gtrsim n^{\frac{1}{d + 2 + 2\beta}}$ and $t_n \sim n^{-\frac{1}{d + 2+2\beta}}$ as $n \to \infty$ gives
\begin{align*}
    \EE[\|\hat{H}_n - H\|] \lesssim  
    n^{-\frac{\beta}{d+2 + 2\beta}},
\end{align*}
and for $\beta > \frac{1}{2}$,} letting $\lambda_n \sim n^{\frac{1}{d + 3}}$, $M_n \gtrsim n^{\frac{1}{d + 3}}$ and $t_n \sim n^{-\frac{3}{4d + 12}}$ as $n \to \infty$ gives
\begin{align*}
    \EE[\|\hat{H}_n - H\|] \lesssim n^{-\frac{3}{4d+12}}, %
\end{align*}
where the expectation is taken with respect to $\mathcal{D}_n$, $\mathcal{P}$, and $\mathcal{X}_n$.   
\end{theorem}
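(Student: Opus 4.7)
The plan is to split $\hat H_n - H$ via the oracle matrix
\[
H_n^{\mathrm{MC}} := \frac{1}{n}\sum_{i=1}^n \nabla f(x_i)\nabla f(x_i)^T,
\]
built from the independent sample $\mathcal{X}_n$, and to control the two resulting pieces by the triangle inequality. Since $\|\nabla f(X)\nabla f(X)^T\|\le L^2$ by Assumption~\ref{assump:f}, matrix Hoeffding (or just an entrywise Bernstein) bounds the Monte Carlo piece by $\EE\|H_n^{\mathrm{MC}} - H\|\lesssim L^2\sqrt{\log d/n}$, which is strictly faster than $n^{-3/(4d+12)}$ and can be absorbed. For the remaining piece, I would use the identity $aa^T - bb^T = (a-b)a^T + b(a-b)^T$ to obtain the deterministic pointwise bound
\[
\bigl\|\hat\nabla_t\hat f_n(x_i)\hat\nabla_t\hat f_n(x_i)^T - \nabla f(x_i)\nabla f(x_i)^T\bigr\| \le \|\hat\nabla_t\hat f_n(x_i) - \nabla f(x_i)\|_2\bigl(\|\hat\nabla_t\hat f_n(x_i)\|_2 + L\bigr),
\]
and then, exploiting the independence $\mathcal{X}_n \perp\!\!\!\perp \mathcal{D}_n$, apply Cauchy--Schwarz to reduce the task to bounding $\EE\|\hat\nabla_t\hat f_n(X) - \nabla f(X)\|_2^2$ for $X\sim\mu$ (a uniform $L^2$-bound on $\|\hat\nabla_t\hat f_n\|_2$ follows from the same estimate plus $\|\nabla f\|\le L$).

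The core work is the coordinatewise decomposition, for each $j\in[d]$,
\[
\triangle_{j,t}\hat f_n(x) - \partial_j f(x) = \underbrace{\triangle_{j,t}(\hat f_n - f)(x)}_{\text{estimation}} + \underbrace{\triangle_{j,t}f(x) - \partial_j f(x)}_{\text{discretization}},
\]
augmented by the contribution $-\partial_j f(x)\mathbbm{1}_{E_{n,t,j}^c}$ from the zero-ing indicator. The Lipschitz gradient assumption on $f$, applied through the identity $\triangle_{j,t}f(x) - \partial_j f(x) = \frac{1}{2t}\int_{-t}^t[\partial_j f(x+se_j) - \partial_j f(x)]\,\mathrm{d}s$, yields a deterministic discretization error of order $Lt$. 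For the estimation piece, I would invoke the pointwise MSE bound for Mondrian forests on $C^{1,1}$ regression functions from \citet{mourtada2020minimax},
\[
\EE[(\hat f_n(y) - f(y))^2] \lesssim \lambda_n^{-4} + \lambda_n^d/n,
\]
valid uniformly in $y$ once $M_n\gtrsim \lambda_n\sim n^{1/(d+3)}$ (the regime in which the averaging yields the $\lambda_n^{-4}$ bias-squared term rather than $\lambda_n^{-2}$). Applied at $y = x\pm te_j$ and divided by $(2t)^2$, this gives an estimation error of order $(\lambda_n^{-4} + \lambda_n^d/n)/t^2$. The indicator contribution is bounded by $L^2\PP[E_{n,t,j}^c]$, and standard Mondrian cell-mass concentration (combined with the strict positivity of $p$ from Assumption~\ref{assump:mu}) shows this probability decays geometrically in $n/\lambda_n^d = n^{3/(d+3)}$, so it is negligible at the target rate.

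Assembling these estimates,
\[
\EE\|\hat\nabla_t\hat f_n(X) - \nabla f(X)\|_2^2 \lesssim t^2 + \frac{\lambda_n^{-4}}{t^2} + \frac{\lambda_n^d}{n\,t^2} + o(\text{stuff}),
\]
and with $\lambda_n\sim n^{1/(d+3)}$, $t_n\sim n^{-3/(4(d+3))}$ one finds $t_n^2$ and $\lambda_n^d/(nt_n^2)$ both of order $n^{-3/(2(d+3))}$, whereas $\lambda_n^{-4}/t_n^2$ is of strictly smaller order. Taking the square root and feeding this back into the Cauchy--Schwarz step yields $\EE\|\hat H_n - H\|\lesssim n^{-3/(4(d+3))}$, as claimed. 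The main obstacle I foresee is the need to apply the Mondrian MSE bound uniformly at the perturbed points $x\pm te_j$, which may sit near or just outside the support $K$ of $\mu$ (so the density factor in the variance term, and the emptiness event $E_{n,t,j}^c$, must be controlled via Assumption~\ref{assump:mu}); the requirement that $t_n$ decay \emph{strictly slower} than the typical Mondrian cell size $1/\lambda_n$ (here $3/(4(d+3)) < 1/(d+3)$) is precisely what keeps these boundary and emptiness contributions subdominant.
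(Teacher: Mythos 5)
Your proposal follows the same architecture as the paper's proof: the split through $H_n=\frac1n\sum_i\nabla f(x_i)\nabla f(x_i)^T$ with matrix concentration for the Monte Carlo piece, the rank-one difference identity plus Cauchy--Schwarz to reduce everything to $\EE\|\hat\nabla_t\hat f_n(X)-\nabla f(X)\|_2^2$, the coordinatewise split into discretization ($O(Lt)$), estimation at the shifted points, and the zeroing indicator, and the same parameter balancing. The genuine gap is at the estimation piece, which is exactly where the paper does its real work. You invoke the Mondrian MSE bound $\lambda^{-4}+\lambda^d/n$ of \citet{mourtada2020minimax} as ``valid uniformly in $y$'' and apply it at $y=x\pm te_j$. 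No such uniform pointwise bound holds: the $\lambda^{-4}$ bias rate rests on a first-order cancellation within the cell that fails when the cell is truncated by the boundary of the support, and the shifted evaluation points sit precisely in (or beyond) that boundary layer. The paper's Lemma~\ref{p:shifted_risk_bnd} is devoted to redoing the bias--variance analysis for the shifted input: the variance term inflates by a factor $(1+t)^d$ because $X+te_j$ is supported in $(1+t)K$, and the bias degrades from $\lambda^{-4}$ to $\lambda^{-3}$ through the terms involving $\mathrm{vol}(Z_0\cap\lambda(K^c-x))$, which occur with probability of order $1/\lambda$; the remark after that lemma states explicitly that $\lambda^{-4}$ is recovered only when $X$ is conditioned to lie a fixed distance from $\partial K$. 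You flag this boundary issue as ``the main obstacle'' but do not resolve it, so the central estimate of the theorem is assumed rather than established. Likewise, your claim that the emptiness-indicator contribution decays geometrically in $n/\lambda_n^d$ presumes cells of mass $\gtrsim\lambda^{-d}$; Mondrian cells can have arbitrarily small mass, and the paper instead bounds the expected inverse cell mass, yielding a polynomial $(1+\lambda)^d/n$ contribution in Proposition~\ref{l:gradvec_error}.

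The mitigating point, which your own arithmetic already shows, is that the final exponent is insensitive to whether the shifted-point bias is $\lambda^{-3}$ or $\lambda^{-4}$ under the prescribed tuning: with $\lambda_n\sim n^{1/(d+3)}$ and $t_n\sim n^{-3/(4d+12)}$ one has $\lambda_n^{-3}/t_n^2\asymp\lambda_n^d/(n t_n^2)\asymp t_n^2\asymp n^{-3/(2d+6)}$, so the claimed rate $n^{-3/(4d+12)}$ survives the degradation. Hence your outline would yield the theorem once a shifted-input risk bound of the paper's type is actually proved, but as written the proposal substitutes a citation that does not apply at the required points for the argument that constitutes the bulk of the paper's proof.
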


As a corollary we obtain an approximation rate of the normalized matrix $A_n$ defined in \eqref{e:An} that we apply to the data in Algorithm \ref{a:estimator}. 

\begin{corollary}\label{cor:An_error}
Consider the setting of Theorem \ref{t:Hn_approx_rate}. \edit{Define $A :=  d \cdot H/\|H\|_{2,1}$ when $\|H\|_{2,1} = 0$ and set $A = 0$ otherwise. For $\beta \leq \frac{1}{2}$,  letting
$\lambda_n \sim n^{\frac{1}{d + 2+2\beta}}$, $M_n \gtrsim n^{\frac{1}{d + 2 + 2\beta}}$ and $t_n \sim n^{-\frac{1}{d + 2+2\beta}}$ as $n \to \infty$ gives
\begin{align*}
    \EE[\|A_n - A\|] \lesssim  
    n^{-\frac{\beta}{d+2 + 2\beta}}.
\end{align*}
and for $\beta > \frac{1}{2}$,} letting $\lambda_n \sim n^{\frac{1}{d + 3}}$, $M_n \gtrsim n^{\frac{1}{d + 3}}$ and $t_n \sim n^{-\frac{3}{4d + 12}}$ %
as $n \to \infty$ gives
\begin{align*}
    \EE[\|A_n - A\|_{2,1}] \lesssim n^{-\frac{3}{4d+12}},%
\end{align*}
where the expectation is taken with respect to $\mathcal{D}_n$, $\mathcal{P}$, and $\mathcal{X}_n$.
\end{corollary}

The proofs of the above results appear in Appendix~\ref{app:estimate_H}. \edit{Our approach to showing Theorem~\ref{t:Hn_approx_rate} uses similar techniques to those developed in~\cite{Trivedietal2014} where the difference quotient approximation of the EGOP was introduced. The approach in~\cite{Trivedietal2014} uses a kernel estimator for the regression function, while we use a Mondrian estimator which has additional randomness. The existing work has %
weaker assumptions on $f$ and $\mu$ and aims only to show consistency under minimal assumptions, whereas we assume conditions under which mimimax optimality of the Mondrian forest estimator holds in order to obtain a rate of convergence for estimating $H$. The main technical novelty in the proof is %
a risk bound on the estimation of the gradient of $f$ via the second order difference quotient, which relies on the asymptotic behavior of the expected risk for a Mondrian forest estimator with respect to the input shifted by $t_n$; see Lemma \ref{p:shifted_risk_bnd}.} 

\edit{We note that the case $\beta > \frac{1}{2}$ is treated separately because for this level of smoothness we must use a suboptimal rate of convergence for the Mondrian estimator. The minimax optimal rate in this case has only been obtained by conditioning on the input away from the boundary. If we want to apply this rate, we could approximate a \emph{conditional} EGOP defined as $H_{\varepsilon} := \mathbb{E}[\nabla f(X) \nabla f(X) | X \in [\varepsilon, 1 - \varepsilon]^d]$ for a fixed $\varepsilon \in (0, \frac{1}{2})$. Since the support of $X$ conditioned on $X \in [\varepsilon, 1 - \varepsilon]^d$ is not the entire support of $X$, there is no guarantee $H_{\varepsilon}$ has an image that spans the entire relevant feature subspace. To ensure this, we need to estimate the unconditional EGOP $H$, which requires using the risk bound for the unconditional risk of a Mondrian estimator, which is suboptimal for $\beta > \frac{1}{2}$.}

We next turn to proving a convergence rate for the estimator of the regression function $f$ obtained from Algorithm \ref{a:estimator} after applying the learned transformation $A_n$ to the input data. %
We first characterize the distribution of the random partition of the original data $\mathcal{D}_n$ induced by this algorithm. Indeed, \citet{OReillyTran2021} (see also \cite{OReilly_ObliqueMondrian}) showed that the random partition of the data obtained by applying a linear transformation and then running a Mondrian process has the same distribution as randomly partitioning the data with a \emph{stable under iteration (STIT) process} \citep{Nagel2005} with a discrete directional distribution determined by the linear transformation. %
The distribution and construction of a STIT process is determined by a \emph{lifetime} parameter $\lambda > 0$ and a %
probability measure $\phi$ on $\mathbb{S}^{d-1}$ called the \emph{directional distribution}, which governs the directions where partitioning occurs. %
When the directional distribution $\phi$ is uniform over the standard unit basis vectors, the corresponding STIT process with lifetime $\lambda$ is the same as the Mondrian process with lifetime $\lambda$. For more on this general class of random partitioning processes, the corresponding class of oblique random forests, and their applications, we refer the reader to \cite{OReillyTran2021, OReillyTran2021minimax} and the references therein. Following \cite{OReilly_ObliqueMondrian}, we will refer to STIT forests where $\phi$ is a discrete measure as \emph{oblique Mondrian forests}.
The following result follows immediately from Proposition 17 by \cite{OReilly_ObliqueMondrian} under the aforementioned honesty assumption.

\begin{lemma}\label{l:fn_is_STIT}
\edit{Assume the matrix $A_n$ is applied to a dataset $\mathcal{D}_n'$ that is independent of the original dataset $\mathcal{D}_n$ used to compute $A_n$ and the forest estimator $\hat{f}_n$ output by Algorithm \ref{a:estimator} is built from $M$ Mondrian processes $\mathcal{P}'$ independent from those used to obtain $A_n$.
Then, conditioned on $A_n$, $\hat{f}_n$} is an oblique Mondrian forest estimator with $M$ trees, lifetime $\lambda$ and directional distribution
$$\phi_{n} = \frac{1}{2}\sum_{i=1}^d \frac{\|a^{(n)}_i\|_2}{\|A_n\|_{2,1}}\left( \delta_{a^{(n)}_i/\|a^{(n)}_i\|_2} + \delta_{- a^{(n)}_i/\|a^{(n)}_i\|_2}\right),$$
where $(a^{(n)}_1, \ldots, a^{(n)}_d)$ are the columns of $A_n \in \RR^{d \times d}$ and $\|\cdot\|_{2,1}$ is the $L_{2,1}$ norm. %
Recall that from Algorithm~\ref{a:estimator}, $\|A_n\|_{2,1} = \sum _{i=1}^{d}\|a^{(n)}_{i}\|_{2} = d$.
\end{lemma}

We now obtain an upper bound on the expected risk of an honest %
version of the estimator obtained from Algorithm~\ref{a:estimator} when the underlying regression function $f$ is assumed to be a ridge function. %
The proof can be found in Appendix~\ref{app:proof_conv_rate_f}. 

\begin{theorem}\label{thm:conv_rate}
\edit{For each $n$, let $A_n$ be the estimator of the normalized EGOP from \eqref{e:An} and let $\hat{f}_{n}$ be the forest estimator obtained from Algorithm \ref{a:estimator} with lifetime parameter $\lambda_n > 0$ and number of trees $M_n$ under the assumptions of Corollary \ref{cor:An_error} and Lemma \ref{l:fn_is_STIT}}. %
Fix $\delta \in [0,1)$.

\edit{For $\beta \leq \frac{1}{2}$, assume $\lambda_n \sim n^{\frac{1}{d+2} + \frac{\beta(1 - \delta)(d-s)}{(d + 2)(d + 2 + 2\beta)}}$ and $M_n \gtrsim \lambda_n$.
Then, for all $n$ large enough, we have that with probability at least $1 - Cn^{-\frac{\beta\delta}{d + 2 + 2\beta}}$ with respect to $\mathcal{D}_n$, $\mathcal{P}$, and $\mathcal{X}_n$,
\begin{align*}
\EE[(\hat{f}_{n}(X) - f(X))^2] &\lesssim %
n^{-\frac{2}{d+2} - \frac{2\beta(1 - \delta)(d-s)}{(d + 2)(d + 2 + 2\beta)}}. %
\end{align*}
For $\beta > \frac{1}{2}$,} assume $\lambda_n \sim n^{\frac{1}{d+2} + \frac{3(1 - \delta)(d-s)}{4(d+3)(d+2)}}$ and $M_n \gtrsim \lambda_n$. %
For all $n$ large enough, we have that with probability greater than $1 - Cn^{-\frac{3\delta}{4d + 12}}$ with respect to $\mathcal{D}_n$, $\mathcal{P}$, and $\mathcal{X}_n$,
\begin{align*}
\EE[(\hat{f}_{n}(X) - f(X))^2] \lesssim n^{-\frac{2}{d+2} - \frac{3(1 - \delta)(d-s)}{2(d+3)(d+2)}}. %
\end{align*}
The expectations above are taken with respect to $X, \mathcal{D}'_n$, and $\mathcal{P}'$.
\end{theorem}

\begin{remark} The result of Theorem~\ref{thm:conv_rate} provides a convergence rate for a \TRIM forest after one iteration under the assumption the regression function $f$ is a \edit{multi-index model}. For $s = d$, this rate matches that obtained for Mondrian and STIT random forests for Lipschitz functions $f$ \citep{mourtada2020minimax, OReillyTran2021minimax}, and improves on this rate for $s < d$.  While it is challenging to account for dependencies between estimators for $f$ and $A$ in the repeated iterations, we expect the dependence on $d$ to become milder as the estimate of $A$ improves. Indeed, Corollary 9 in \cite{OReilly_ObliqueMondrian} implies that with the correct knowledge of the low-dimensional subspace, the convergence rate should scale as $n^{-2/(s+2)}$. In effect, this corresponds to the case $d=s$.
\end{remark}

\edit{\begin{remark}
    The results in Theorems~\ref{t:Hn_approx_rate} and \ref{thm:conv_rate} are given in terms of asymptotic moment bounds and highlight only the dependence on $n$ as $n \to \infty$. The proofs in the appendix detail more explicit upper bounds with constants depending on other parameters such as the dimension $d$, the H\"{o}lder constant $L$, and the distribution $\mu$ of $X$.
\end{remark}}

\edit{We now make a few additional comments on the preceding result, the proof of which appears in Appendix \ref{app:proof_conv_rate_f} and relies on a risk bound obtained in \cite{OReilly_ObliqueMondrian} for oblique Mondrian trees. First, observe that the choice of the parameter $\delta$ represents a trade-off; as $\delta$ decreases to zero, the rate of convergence of the upper bound is faster, but the probability with which the bound holds decreases. Second, given the regularity assumption on $f$ one might expect the first term in the exponent to be $\frac{-(2 + 2\beta)}{d+2 + 2\beta}$, matching the rate of convergence of a standard Mondrian forest for $f$ satisfying the smoothness condition in Assumption~\ref{assump:f} \citep{mourtada2020minimax}. %
This is not the case in our result because we have applied the risk bound in \cite{OReilly_ObliqueMondrian} for Lipschitz functions. We do not have an analogous risk bound under the stronger smoothness assumption we make here, as the corresponding result in \cite{OReilly_ObliqueMondrian} requires stronger assumptions on the matrix $\hat{H}_n$ than are satisfied.}

\edit{With an improved risk bound for oblique Mondrian estimators of functions satisfying the smoothness condition in Assumption~\ref{assump:f}, the rate of convergence only yields a mild improvement on the minimax optimal rate for H\"{o}lder functions \emph{without} the multi-index model assumptions. In particular, such a rate will still depend on the ambient dimension $d$ rather than on the dimension $s$ of the relevant feature subspace. This is because the estimation error of $\hat{H}_n$ depends on the nonparametric convergence rate for a standard Mondrian estimator. There are a few possibilities for improving the rate assuming additional knowledge about the regression function. First, by having direct access to the evaluations of the function's gradient at the training points, $\hat{H}_n$ becomes a covariance estimator with a $n^{-1}$ convergence rate. Alternatively, if the dimension $s$ of the relevant feature subspace is known, one can perform an eigendecomposition of $\hat{H}_n$ and use the best rank-$s$ approximation to $\hat{H}_n$ to project the data onto the approximation of the lower-dimensional relevant feature subspace. %
The estimation error for this projection may be an improvement over the full EGOP estimate, but would incur the additional cost of the eigendecomposition (an expensive operation in high dimensions). Recently, \cite{liu2024learning} obtained a convergence rate of $n^{-1}$ for learning the relevant feature subspace using the generalized contour regression method, but this result makes additional assumptions on the data distributions and requires knowledge that we do not assume here. In general, projecting the data onto an estimated lower-dimensional feature space risks the estimator missing a relevant feature. This is in contrast to our approach which, rather than restricting to a small set of features, places \emph{weights} on a set of features that span the whole feature space.} 

\edit{In this work, we try to minimize the assumptions and computational cost of our approach, which comes at the expense of a weaker theoretical guarantee on the method's performance, particularly in higher dimensional spaces. To address this computational-statistical trade-off, we advocate for performing multiple iterations of the TrIM algorithm. We observe that experimentally, multiple iterations improve the performance. A theoretical study of this approach, including the number of iterations needed to obtain optimal theoretical guarantees, will be explored in future work.}

\subsection{Weighted Mondrian estimators}\label{sec:weighted_mondrian}

We now consider the setting where the algorithm is restricted to a diagonal matrix transformation that scales each covariate by a gradient-based importance weight. By Lemma \ref{l:fn_is_STIT} this is equivalent to reweighting the axis-aligned cuts of a Mondrian process. We also now assume that the regression function is \emph{sparse}. That is, let $S \subseteq \{1, \ldots, d\}$ be a subset of size \edit{$|S| = s < d$} that corresponds to a low dimensional set of relevant feature dimensions. Assume the regression setting of Section \ref{sec:regression} and additionally that the true function $f$ is of the form
\begin{align}\label{e:sparse_f}
f(x) = g(x_S) = g(\{x_i\}_{i \in S}),
\end{align}
for some function $g: \RR^s \to \RR$. This study is motivated by the iterative random forests algorithm~\citep{basu2018iterative} and the approach of \cite{deng2022towards}, which computes single-covariate feature scores to identify $S$.

For a Mondrian forest estimator $\hat{f}_n$ of $f$ built from the training data $\mathcal{D}_n$ and each covariate index $i$, we can define the feature importance score
\begin{align}\label{e:omega_i}
\omega^{(n,t)}_{i} = \frac{1}{n} \sum_{j=1}^n |(\hat{\nabla}_{t}\hat{f}_{n}(x_j))_i|^2,
\end{align} %
where $\hat{\nabla}_t\hat{f}_{n}(x)$ is the gradient estimator as defined in \eqref{e:grad_approx} and $\mathcal{X}_n = \{x_j\}_{j=1}^n$ are $n$ i.i.d. samples of the input random variable $X$ that are independent of $\mathcal{D}_n$. Note that these scores form the diagonal of the EGOP estimator \eqref{e:H_approx}. 

In Theorem~\ref{thm:omega_bnds} below we first show that these weights, with appropriate tuning of the parameters with $n$, are consistent estimators of the diagonal of the EGOP matrix normalized to have unit trace, which are nonzero for relevant covariate indices $i \in S$ and zero for $i \notin S$. %

\begin{theorem}\label{thm:omega_bnds}
Let $\hat{f}_{n}$ be a Mondrian forest estimator with lifetime $\lambda_n > 0$ and number of trees $M_n$ 
of a function $f$ as in \eqref{e:sparse_f} for some set of relevant features $S$ and $g$ satisfying Assumption \ref{assump:f}. \edit{Additionally, assume $\EE[\|\nabla f(X)\|_2^2] \neq 0$.} For each $i \in [d]$, let $\omega^{(n)}_i := \omega_i^{(n,t_n)}$ be defined as in \eqref{e:omega_i} \edit{and define the normalized weights
\begin{align*}
    \alpha_{i}^{(n)} := \frac{\omega_i^{(n)}}{\sum_{j=1}^d \omega_j^{(n)}},
\end{align*}
when $\sum_{j=1}^d \omega_j^{(n)} \neq 0$, and $\alpha^{(n)}_i = 0$ otherwise. }

\edit{For $\beta \leq 1/2$, letting $\lambda \sim n^{\frac{1}{d + 2+2\beta}}$, $M \gtrsim n^{\frac{1}{d + 2 + 2\beta}}$ and $t \sim n^{-\frac{1}{d + 2+2\beta}}$ 
gives %
\begin{align*}
    \EE\left[ \max_{i \in\{1, \ldots, d\}} \left|\alpha_i^{(n)} - \frac{\EE[|(\nabla f(X))_i|^2]}{\EE[\|\nabla f(X)\|_2^2]}\right|\right] \lesssim
    n^{-\frac{\beta}{d+2 + 2\beta}}.
\end{align*}}
\edit{For $\beta > 1/2$}, letting $\lambda_n \sim n^{\frac{1}{d + 3}}$, $M_n \gtrsim n^{\frac{1}{d + 3}}$ and $t_n \sim n^{-\frac{3}{4d + 12}}$ as $n \to \infty$ gives
\begin{align*}
     \EE\left[ \max_{i \in\{1, \ldots, d\}} \left|\edit{\alpha_i^{(n)}} - \frac{\EE[|(\nabla f(X))_i|^2]}{\EE[\|\nabla f(X)\|_2^2]}\right|\right] \lesssim n^{-\frac{3}{4d+12}}. %
\end{align*}
where the expectation is taken with respect to $\mathcal{X}_n$, $\mathcal{P}$ and $\mathcal{D}_n$.
\end{theorem}

Now consider the updated \emph{weighted} Mondrian forest estimator built from $n$ i.i.d. samples of $(X,Y)$ that are independent of $\mathcal{D}_n$, and $M_n$ Mondrian trees with lifetime $\lambda_n > 0$ and directional distribution
\begin{align}\label{e:model_mondrian}
\phi_n = \sum_{i=1}^d \frac{\omega^{(n)}_i}{\sum_{j=1}^d \omega^{(n)}_j} \delta_{e_i},
\end{align} 
where the weights $\omega^{(n)}_i > 0$ determine how often covariates are split in the partitioning process are obtained from the feature importance scores defined in \eqref{e:omega_i}. Theorem~\ref{t:conv_rate_weighted_mondrian} gives an asymptotic upper bound on the expected quadratic risk of this weighted Mondrian forest estimator. %
The proofs of these results are found in Appendix~\ref{app:proof_conv_rate_f_weighted}.%

\begin{theorem}\label{t:conv_rate_weighted_mondrian}
Assume $\mathrm{supp}(\mu) = [0,1]^d$ and that $\mu$ has a positive and Lipschitz density on its support. Let $\hat{f}_{n}$ be the weighted Mondrian forest estimator with directional distribution \eqref{e:model_mondrian}, lifetime $\lambda_n > 0$, and number of trees $M_n$ of a function $f$ as in \eqref{e:sparse_f} for some set of relevant features $S$ and $g$ satisfying Assumption \ref{assump:f}. Fix $\delta \in (0,1)$.

\edit{For $\beta \leq 1/2$, let $\lambda_n \sim n^{\frac{1}{d+2 + 2\beta} + \frac{\beta(1 - \delta)(d-s)}{(d + 2 + 2\beta)^2}}$ and $M_n \gtrsim \lambda_n^{2\beta}$. Then, for all $n$ large enough, with probability at least $1 - Cn^{-\frac{\beta\delta}{d + 2 + 2\beta}}$ with respect to $\mathcal{D}_n$, $\mathcal{P}$, and $\mathcal{X}_n$, 
\begin{align*}
\EE[(\hat{f}_{n}(X) - f(X))^2] &\lesssim %
n^{-\frac{2 + 2\beta}{d+2 + 2\beta} - \frac{\beta(1 - \delta)(d-s)(2 + 2\beta)}{(d + 2 + 2\beta)^2}}. 
\end{align*}}

\edit{For $\beta > 1/2$,} let $M_n \gtrsim \lambda_n$ and $\lambda_n \sim n^{\frac{1}{d+3}+ \frac{3(1-\delta)(d-s)}{4(d+3)^2}}$. Then for fixed $\delta \in (0,1)$ and all $n$ large enough, with probability at least $1 - Cn^{-\frac{3\delta}{4d + 12}}$ with respect to $\mathcal{D}_n$, $\mathcal{P}$, and $\mathcal{X}_n$,
\begin{align*}
\EE[(\hat{f}_{n}(X) - f(X))^2] \lesssim n^{-\frac{3}{d+3} - \frac{3\beta (1 - \delta)(d-s)}{(d+3)(d + 2 + 2\beta)}}.  
\end{align*}
\end{theorem}

\begin{remark}
\edit{Note that for $\beta \leq 1/2$, the result gives a rate of convergence that is faster than the minimax optimal rate of convergence $n^{-\frac{2 + 2\beta}{d+2 +2\beta}}$ for general functions on $\RR^d$ satisfying Assumption \ref{assump:f} with $s < d$. If the quadratic risk is conditioned on the input being a fixed distance $\varepsilon \in (0,1/2)$ from the boundary of the support, one could also obtain a rate of convergence above that is faster than the minimax optimal rate, i.e., %
\begin{align*}
\EE[(\hat{f}_{n}(X) - f(X))^2| X \in [\varepsilon, 1 - \varepsilon]^d] &\lesssim n^{-\frac{2 + 2\beta}{d+2 + 2\beta} - \frac{3(1 - \delta)(d-s)(2 + 2\beta)}{4(d+3)(d + 2 + 2\beta)}}. 
\end{align*}
However, we emphasize that using the estimator requires a-priori knowledge that the relevant feature subspace can be detected at inputs away from the domain boundary. 
}
\end{remark}

\section{Synthetic Data Experiments}
\label{sec:simulation}

Our synthetic data experiments aim to demonstrate how the proposed \TRIM method from Section~\ref{sec:methodology} can identify the relevant feature subspace and leverage this structure for prediction. 
\begin{figure}[hbt!]
    \centering
    \begin{subfigure}[b]{0.48\textwidth}
        \includegraphics[width=\textwidth]{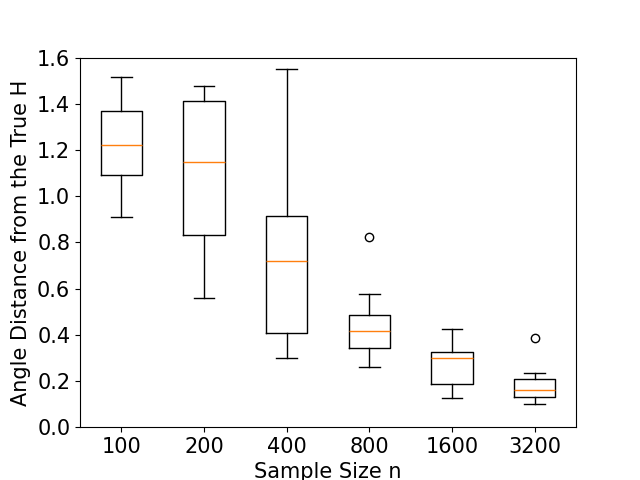}
        \caption{Scenario 1}
    \end{subfigure}
    \hfill %
    \begin{subfigure}[b]{0.48\textwidth}
        \includegraphics[width=\textwidth]{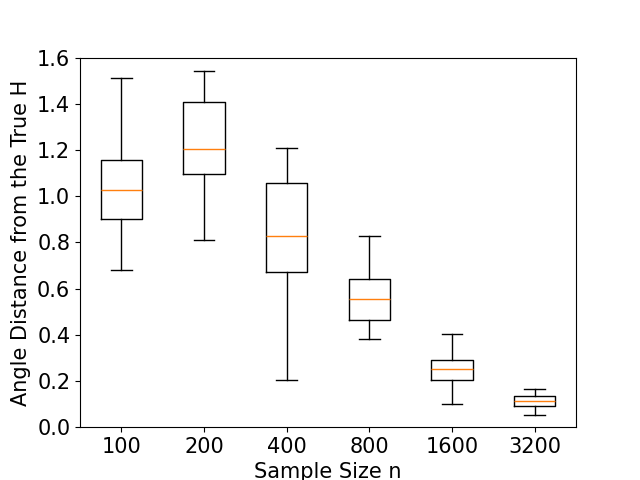}
        \caption{Scenario 2}
    \end{subfigure}
    \hfill %
    \begin{subfigure}[b]{0.48\textwidth}
        \includegraphics[width=\textwidth]{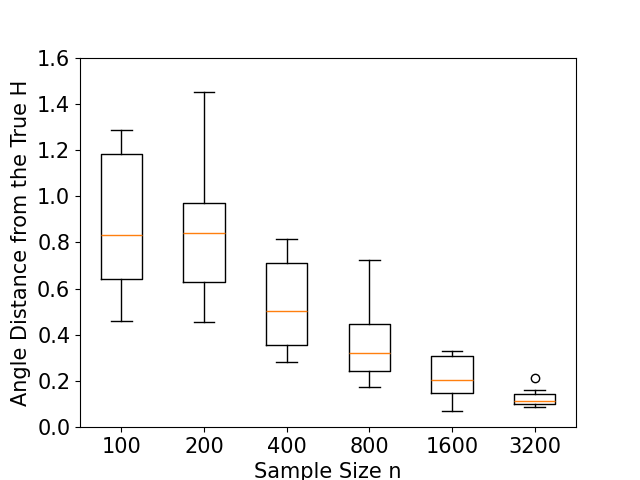}
        \caption{Scenario 3}
    \end{subfigure}
    \hfill %
    \begin{subfigure}[b]{0.48\textwidth}
        \includegraphics[width=\textwidth]{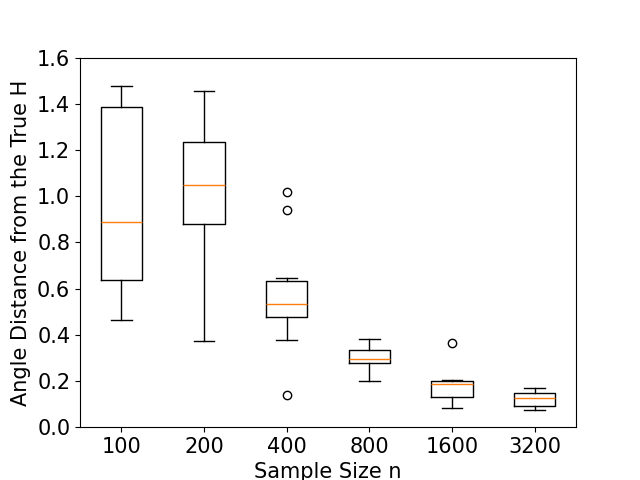}
        \caption{Scenario 4}
    \end{subfigure}
    \caption{Maximum principal angles between the subspaces spanned by the EGOP matrix $H$ and estimated EGOP matrix $\hat{H}_{n,t}$ for different scenarios. This confirms the theoretical results that $\hat{H}_{n,t}$ converges to $H$ as the training sample size increases.}
    \label{fig:dist_to_true_H}
\end{figure}
We consider four different %
\edit{multi-index models} of the form $f(x) = g(Bx)$ for some low-rank matrix $B \in \R^{s \times d}$ and $g \colon \R^{s} \rightarrow \R$ with $d = 5$ and $s = 2$. \edit{We also replicate the following analysis for $d = 50$ and $s = 2$ and arrive at similar observations; see Section \ref{sec:d large} for further details.} In particular, we consider the combinations of two transformation matrices $B$ and two functions $g$. To define the relevant subspace, we consider the following two transformations:
\begin{enumerate} \itemsep0pt
    \item The first row of $B_1$ captures dependence on the first three inputs, and the second row of $B_1$ captures the dependence of all but the third input.
    \vspace{-5pt}
    $$%
    \renewcommand{\arraystretch}{0.5}
    B_1 = \begin{bmatrix}
        1 & 1 & 1 & 0 & 0\\
        1 & 1 & 0 & 1 & 1
    \end{bmatrix}.$$
    \item $B_2$ contains the first two rows of a random matrix from the orthogonal group $O(5)$. Specifically, we consider
    \vspace{-5pt}
    $$%
    \renewcommand{\arraystretch}{0.5}
    B_2 = \begin{bmatrix}
        -0.49424072 &  0.11211344 & -0.27421644 & -0.62783889 &  0.52324025\\
        -0.0014017 &  0.71072528 &  0.69059226 & -0.11064719 &  0.07554563\\
    \end{bmatrix}.$$
\end{enumerate}
For the predictive task, we consider the two regression functions: $g_1(x) = x_1^4 + x_2^4$ and $g_2(x) = \exp(-0.25\min(x_1^2, x_2^2))$. This leads to four different scenarios: (1) $f(x) = g_1(B_1x)$, (2) $f(x) = g_2(B_1x)$, (3) $f(x) = g_1(B_2x)$ and (4) $f(x) = g_2(B_2x)$.

\begin{figure}[hbt!]
    \centering
    \begin{subfigure}[b]{0.48\textwidth}
        \includegraphics[width=\textwidth]{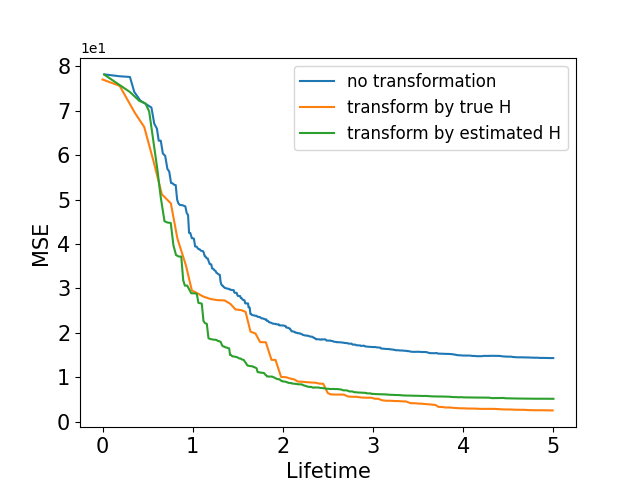}
        \caption{Scenario 1}
    \end{subfigure}
    \hfill %
    \begin{subfigure}[b]{0.48\textwidth}
        \includegraphics[width=\textwidth]{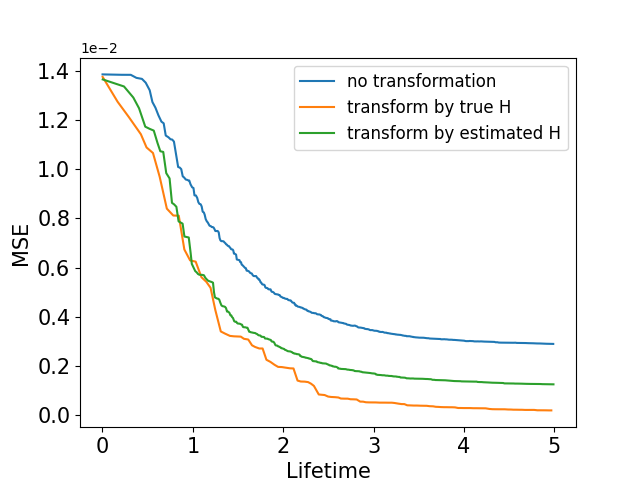}
        \caption{Scenario 2}
    \end{subfigure}
    \hfill %
    \begin{subfigure}[b]{0.48\textwidth}
        \includegraphics[width=\textwidth]{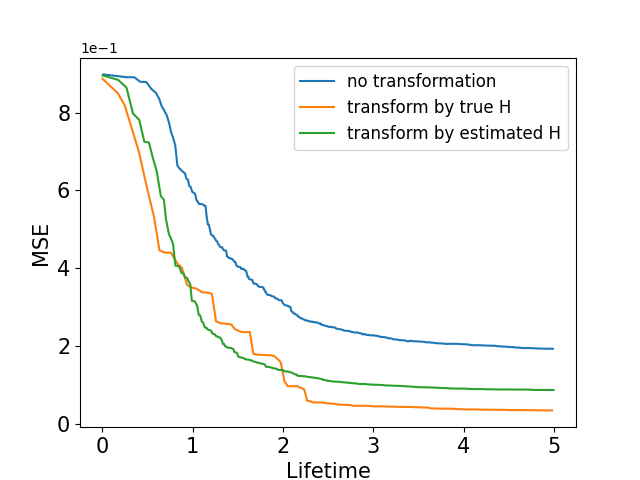}
        \caption{Scenario 3}
    \end{subfigure}
    \hfill %
    \begin{subfigure}[b]{0.48\textwidth}
        \includegraphics[width=\textwidth]{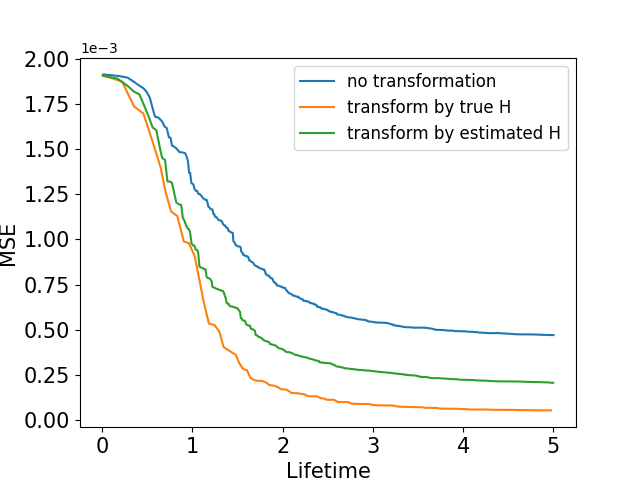}
        \caption{Scenario 4}
    \end{subfigure}
    \caption{Comparison of the test mean squared error (MSE) of three different methods: Baseline, Proposed, and Oracle.}
    \label{fig:test MSE comparison}
\end{figure}

For each scenario, we set the maximum lifetime parameter $\lambda = 5$, the number of Mondrian trees $ = 10$, the step size $t = 0.1$, the inputs $X \sim \mathrm{Uniform}([0,1]^d)$, and noise $\ee \sim \mathcal{N}(0, \sigma^2 = 0.01)$. We set the training sample size to at most 3200 and the test sample size to 3200. 

For the first experiment, we vary the training sample size $n \in \{100, 200, 400, 800, 1600, 3200\}$ and evaluate the recovery of the low-dimensional relevant feature subspace using the \TRIM  method. In particular, we compare the maximum principal angles between the subspaces spanned by the EGOP matrix $H$ and estimated EGOP matrix $\hat{H}_{n,t}$. The value of the principal angles range from $0$ to $\pi/2$ \citep{ye2016schubert}\edit{, and smaller maximum principal angles implies better estimation of the EGOP}. We refer the reader to Appendix~\ref{sec:MPA} for the definition and interpretation of the maximum principal angle. The true EGOP matrix in~\eqref{e:EGOP} is estimated based on exact evaluations of the gradient $\nabla f$, which are computed using the %
\texttt{Python} package 
\texttt{JAX} \citep{jax2018github} at the test samples. The estimated EGOP matrix is calculated by our proposed method using the training samples, as described in Algorithm \ref{a:estimate H}.

As shown in Figure \ref{fig:dist_to_true_H}, across all scenarios, the maximum principal angles between the subspaces spanned by the EGOP matrix $H$ and estimate $\hat{H}_{n,t}$ decrease as the training sample size increases. This aligns with the theoretical result that $\hat{H}_{n,t}$ converges to the true EGOP matrix $H$ as the training sample size increases; see Theorem \ref{t:Hn_approx_rate}.

For the second experiment, we use the estimated EGOP matrix $\hat{H}_{n,t}$ to transform the data and construct the Mondrian forest estimator with lifetime $\lambda$ from 0 to 5 using Algorithm~\ref{a:estimator}. For each scenario, we set the training sample size to $n = 3200$ and compare the test mean squared error (MSE) of three different methods: 
\begin{itemize} \itemsep0pt
    \item {\bf Baseline}: Mondrian forest estimator with lifetime $\lambda$ without transforming the data;
    \item {\bf Proposed}: Mondrian forest estimator with lifetime $\lambda$ after transforming the data using the estimated EGOP matrix $\hat{H}_{n,t}$;
    \item {\bf Oracle}: Mondrian forest estimator with lifetime $\lambda$ after transforming the data using the EGOP matrix $H$.
\end{itemize}
Figure~\ref{fig:test MSE comparison} shows that the test MSE of the proposed method is consistently lower than the baseline  across all scenarios. This suggests that the proposed method can effectively identify the relevant feature subspace and leverage this knowledge for predictive tasks. Moreover, there is only a small gap between the test MSE of the proposed and oracle method. 

In the next two experiments, we show that it is possible to decrease the gap with the oracle method by iterating the proposed method, i.e., we run Algorithm~\ref{a:TMFE} with $K = 2$ by using the updated Mondrian forest to a second estimate of the EGOP matrix. As the figures are similar to the previous experiment, we present them in Appendix \ref{sec:reiterate}. As shown in Figure \ref{fig:dist_to_true_H reiterate}, after one round of the iteration, the principal angles between the subspaces spanned by the EGOP matrix $H$ and the updated version of the estimate $\hat{H}_{n,t}$ are smaller than the principal angles between the subspaces spanned by $H$ and the original estimate $\hat{H}_{n,t}$ shown in Figure \ref{fig:dist_to_true_H}. This suggests that multiple iterations of the proposed method can improve the estimation of $H$ and further decrease the test MSE. In fact, as shown in Figure \ref{fig:test MSE comparison reiterate}, the test MSE after the second iteration of the proposed method is closer to the test MSE of the oracle method than the test MSE of the proposed method with a single iteration. Future work will establish rigorous convergence rates for multiple iterations of the \TRIM forest estimator.

Based on the MSE curves across different scenarios, we observe that a lifetime of 3 captures most of the performance gains, given that the curves begin to plateau after this value, while avoiding the higher computational cost associated with larger lifetimes, where only marginal improvements are observed. To systematically investigate the impact of other tuning parameters, we conducted an ablation study across the four different scenarios without reiteration (i.e., running Algorithm~\ref{a:TMFE} with $K = 0$). Our base configuration used 10 trees, lifetime of 3, step size of 0.2 and sample sizes equal to 3200 for both training and testing. For each scenario, we considered the following sets for two key parameters: (1) the number of trees in \{5, 10, 25, 50\}, and (2) the step size in \{0.025, 0.1, 0.4, 1.0\}. We evaluated the performance of the estimators using three metrics, averaged over 10 repetitions: the distance to the true $H,$ which measures accuracy of the recovered subspace, mean squared error of the predictions, and computational runtime. The mean square error is measured using the \textbf{Proposed} method above. 

Our results show clear patterns across all scenarios: increasing the number of trees from 5 to 25 yields substantial improvements in both MSE and subspace recovery (e.g., largely reducing MSE and distance to the true $H$), but further increases to 50 trees offer only marginal gains at more than triple the computational cost. For the step size, we observe that very small (0.025) and very large (1.0) values consistently perform poorly, while values between 0.1 and 0.4 achieve better results. An example is shown in Figure \ref{fig:ablation scenario 1}.
\begin{figure}[hbt!]
    \centering
    \begin{subfigure}[b]{0.95\textwidth}
        \includegraphics[width=\textwidth]{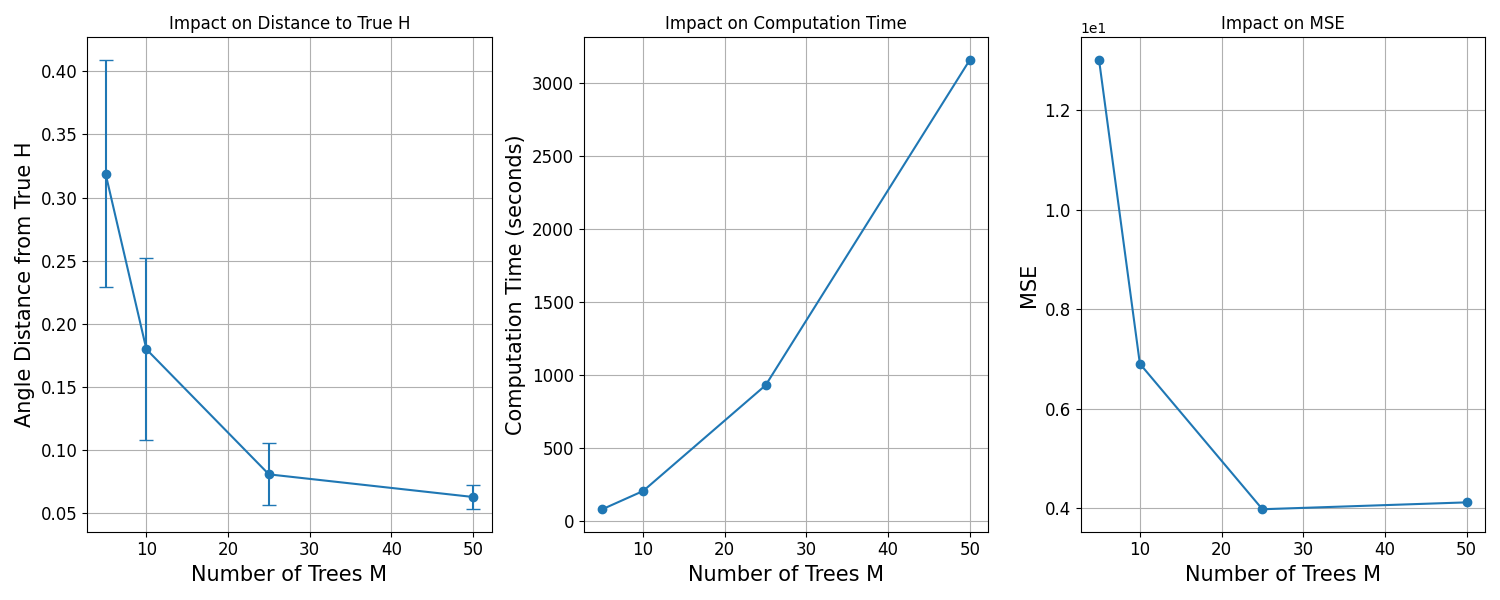}
        \caption{Ablation Study from varying the number of trees $M$}
    \end{subfigure}
    \hfill %
    \begin{subfigure}[b]{0.95\textwidth}
        \includegraphics[width=\textwidth]{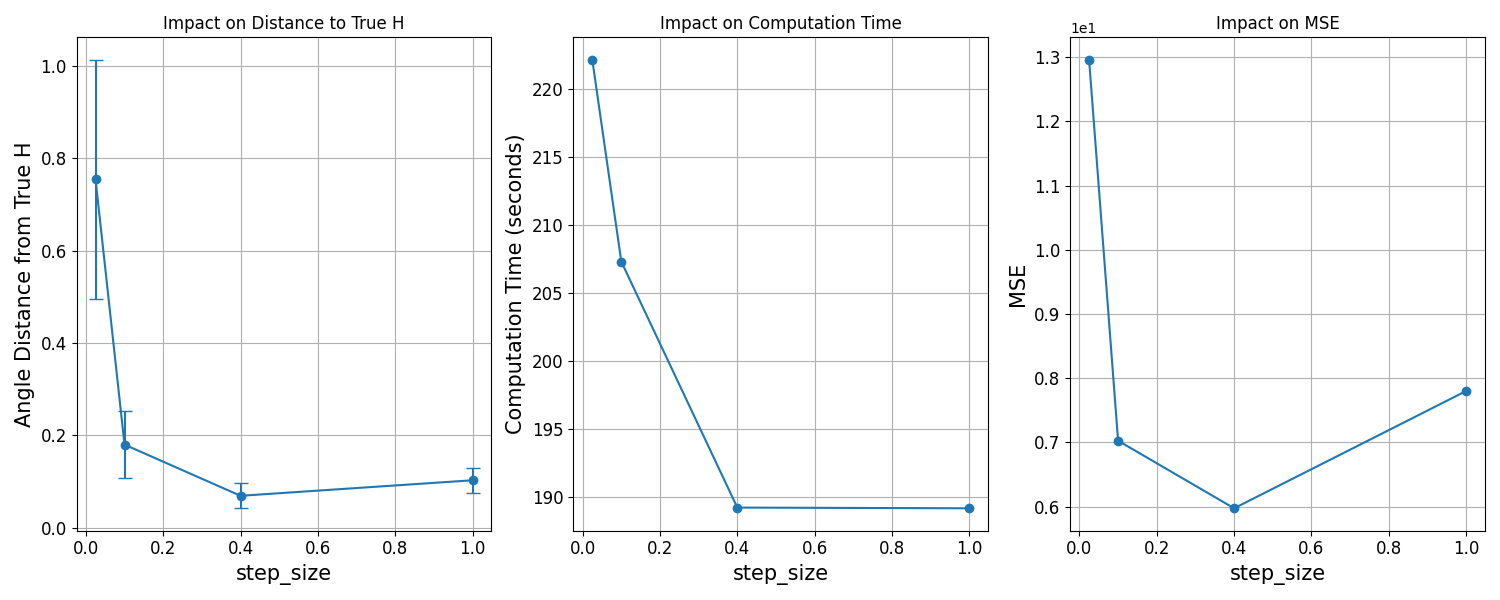}
        \caption{Ablation Study from varying the step size $t$}
    \end{subfigure}
    \caption{Ablation studies to evaluate the effect of hyperparameter choices on the performance of the estimator on Scenario 1.}
    \label{fig:ablation scenario 1}
\end{figure}
Given these findings, we recommend that practitioners conduct a small-scale cross-validation study to determine the optimal parameters for their specific problem. The key trade-offs to consider are: (1) for the number of trees, balancing the substantial accuracy gains from more trees against the linear increase in computational cost; (2) for step size, weighing the stability of smaller values against the faster convergence of larger values. An efficient approach is to first perform cross-validation on a subset of the data with a coarse grid of parameter values, followed by refining the search around promising parameter regions based on specific accuracy requirements and computational constraints.

\subsection{Weighted Mondrian Estimators}
\label{sec:weighted_mondrian_simulation}

\edit{We now consider the situation where $B = I$, the identity matrix, and $g$ is either $g_1(x)$ or $g_2(x)$ as defined above. In other words, we consider the following two scenarios: (1) $f(x) = g_1(x)$, and (2) $f(x) = g_2(x)$. In this section, we compare the performance of \TRIM with the weighted Mondrian forest estimator, as described in Section~\ref{sec:weighted_mondrian}. We vary the training sample size $n \in \{200, 400, 800, 1600\}$ and evaluate the mean squared error (MSE) of the two methods on the test set of size 1600. To ensure fair comparison, we first train an initial Mondrian forest estimator, which would be used by both methods to estimate the EGOP matrix $H$. We then use these estimates to construct the corresponding estimators.}

\begin{figure}[hbt!]
    \centering
    \begin{subfigure}[b]{0.48\textwidth}
        \includegraphics[width=\textwidth]{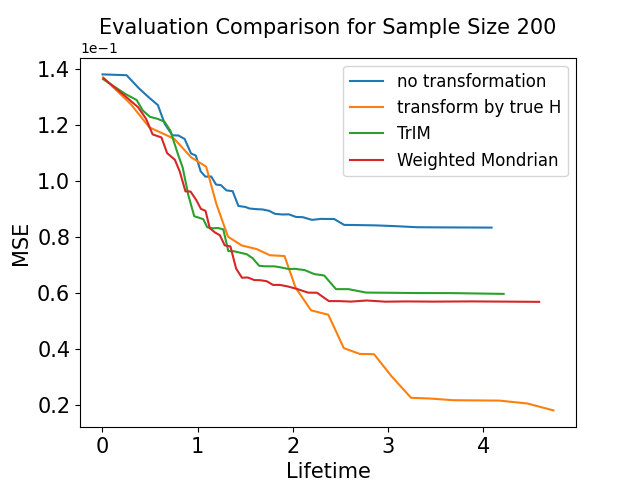}
        \caption{Scenario 1}
    \end{subfigure}
    \hfill %
    \begin{subfigure}[b]{0.48\textwidth}
        \includegraphics[width=\textwidth]{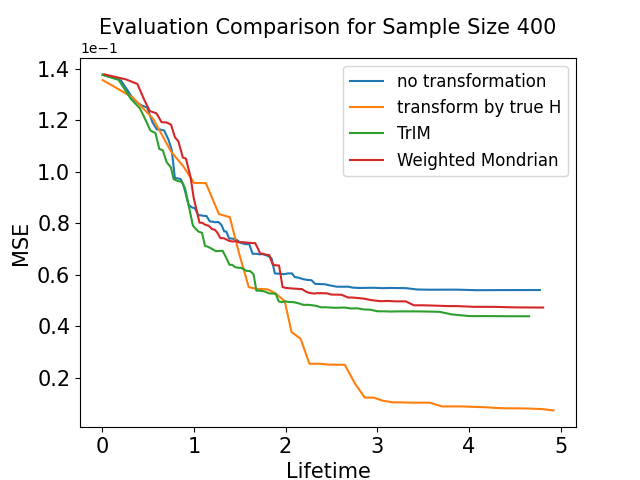}
        \caption{Scenario 2}
    \end{subfigure}
    \hfill %
    \begin{subfigure}[b]{0.48\textwidth}
        \includegraphics[width=\textwidth]{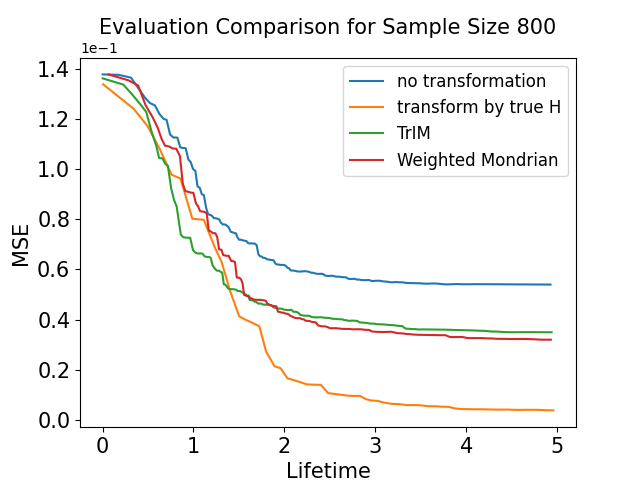}
        \caption{Scenario 3}
    \end{subfigure}
    \hfill %
    \begin{subfigure}[b]{0.48\textwidth}
        \includegraphics[width=\textwidth]{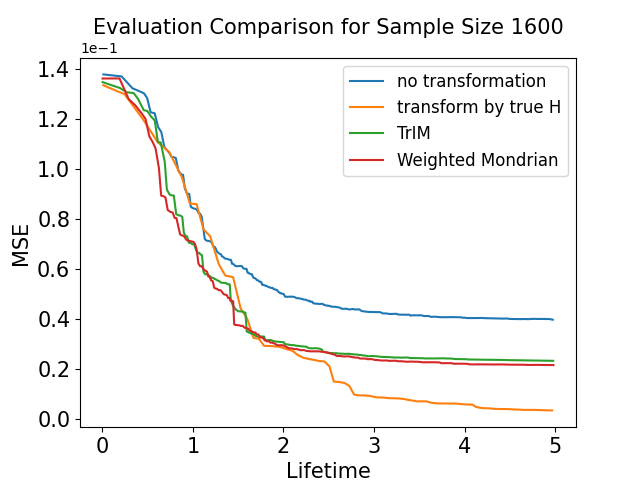}
        \caption{Scenario 4}
    \end{subfigure}
    \caption{Comparison of the test mean squared error (MSE) of three different methods: Baseline, Oracle, TrIM and Weighted Mondrian Forest for different training sample sizes. In this case, we use $g = g_1$.}
    \label{fig:aligned MSE comparison scenario 1}
\end{figure}

\begin{figure}[hbt!]
    \centering
    \begin{subfigure}[b]{0.48\textwidth}
        \includegraphics[width=\textwidth]{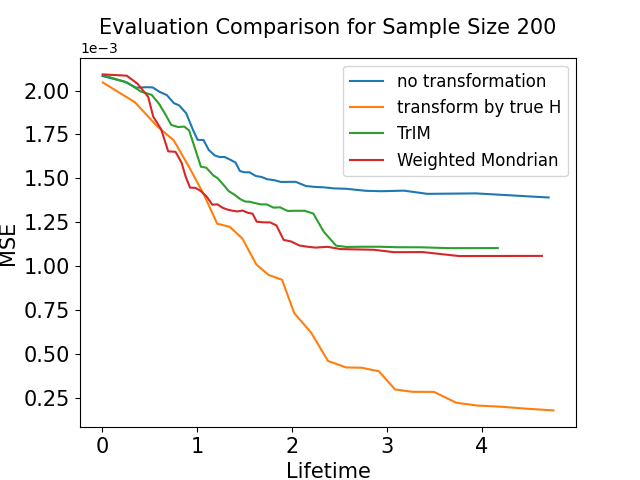}
        \caption{Scenario 1}
    \end{subfigure}
    \hfill %
    \begin{subfigure}[b]{0.48\textwidth}
        \includegraphics[width=\textwidth]{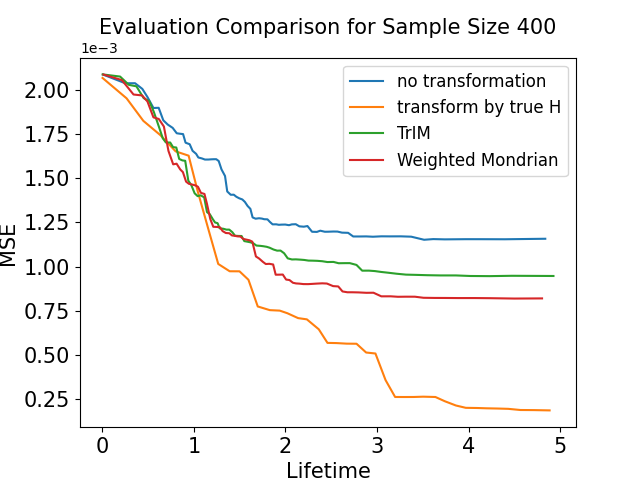}
        \caption{Scenario 2}
    \end{subfigure}
    \hfill %
    \begin{subfigure}[b]{0.48\textwidth}
        \includegraphics[width=\textwidth]{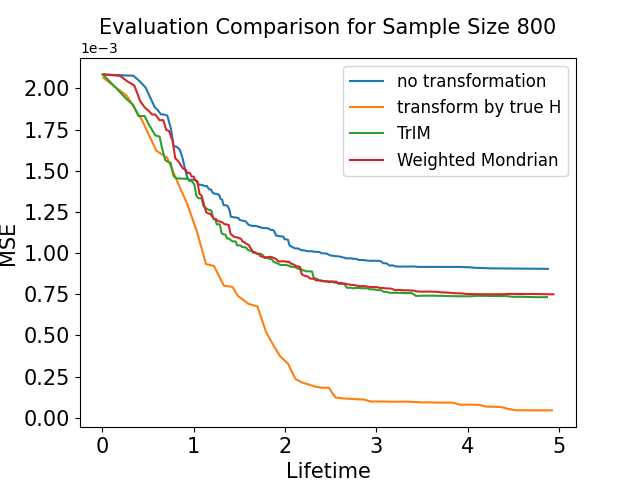}
        \caption{Scenario 3}
    \end{subfigure}
    \hfill %
    \begin{subfigure}[b]{0.48\textwidth}
        \includegraphics[width=\textwidth]{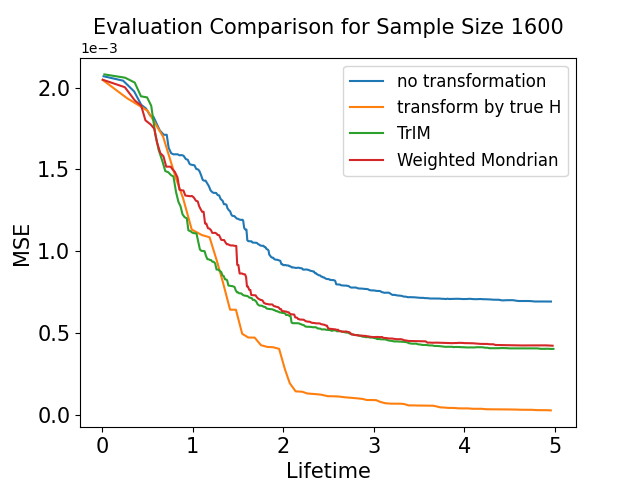}
        \caption{Scenario 4}
    \end{subfigure}
    \caption{Comparison of the test mean squared error (MSE) of three different methods: Baseline, Oracle, TrIM and Weighted Mondrian Forest for different training sample sizes. In this case, we use $g = g_2$.}
    \label{fig:aligned MSE comparison scenario 2}
\end{figure}

\edit{Figures \ref{fig:aligned MSE comparison scenario 1} and \ref{fig:aligned MSE comparison scenario 2} show that the test MSE of the weighted Mondrian forest estimators are lower than \TRIM when the training sample size is small (i.e., 200 and 400). However, as the training sample size increases to 800 and 1600, the gap between the test MSE of both methods becomes negligible. In practice, if the relevant features for a problem are known to be axis-aligned, then the weighted Mondrian forest estimator can be a good choice, especially when the training sample size is small. In other scenarios, we recommend using \TRIM, as it can handle more complex structures (i.e., relevant feature subspaces that are not axis aligned) given sufficient data.}

\subsection{Higher Dimensional Experiments}
\label{sec:d large}

We extend the previous analysis to higher dimensions by extending the matrix $B \in \R^{s \times d}$. Setting $s = 2$ and $d = 50$, we consider the following two transformations:
\begin{enumerate} \itemsep0pt
    \item The first row of $B_1$ captures dependence on the first $\lceil d/2 \rceil$ inputs, while the second row captures dependence on the first and last $\lfloor d/2 \rfloor$ inputs, i.e.
    $$
        B_1[i,j] =
        \begin{cases}
        1, & 
        \text{if } 
        \begin{cases}
        i = 1 \text{ and } j < \lceil \tfrac{d}{2} \rceil, \\
        i = 2 \text{ and } \big(j = 1 \text{ or } j \ge \lceil \tfrac{d}{2} \rceil \big),
        \end{cases} \\
        0, & \text{otherwise.}
        \end{cases}
    $$
    Or equivalently,
    \vspace{-5pt}
    $$%
    \renewcommand{\arraystretch}{0.5}
    B_1 = 
    \begin{bmatrix}
        1 & 1 & \cdots & 1 & 0 & \cdots & 0\\
        0 & 1 & 0 & \cdots & 0 & 1 & \cdots & 1
    \end{bmatrix}
    $$
    \item $B_2$ contains the first two rows of a random matrix from the orthogonal group $O(50)$. Specifically, we draw a random orthogonal matrix using \texttt{special\_ortho\_group} from the \texttt{scipy.stats} package \citep{sciPy_special_ortho_group_2025}, and take its first two rows.
\end{enumerate}

\begin{figure}[hbt!]
    \centering
    \begin{subfigure}[b]{0.48\textwidth}
        \begin{overpic}[width=\textwidth]{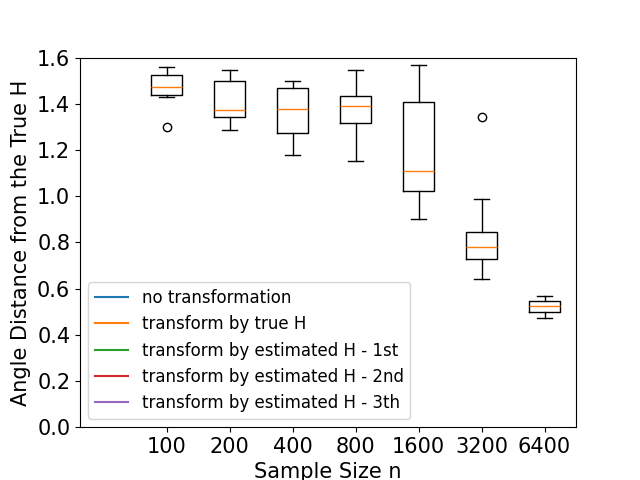}
        \put(13,8.5){\color{white}\rule{3.6cm}{2cm}} %
        \end{overpic}
        \caption{Scenario 1}
    \end{subfigure}
    \hfill %
    \begin{subfigure}[b]{0.48\textwidth}
        \includegraphics[width=\textwidth]{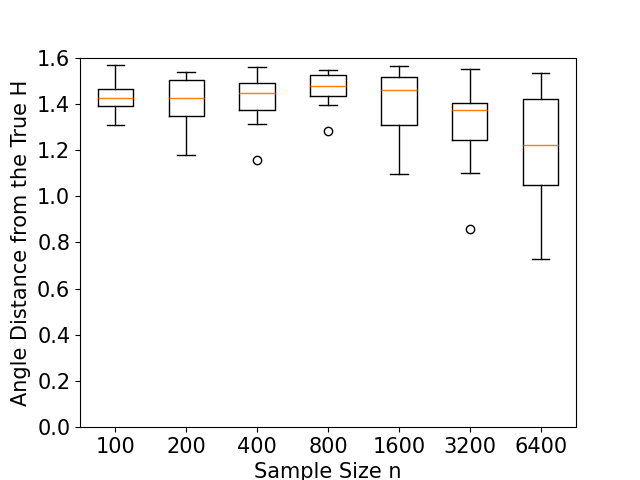}
        \caption{Scenario 2}
    \end{subfigure}
    \hfill %
    \begin{subfigure}[b]{0.48\textwidth}
        \includegraphics[width=\textwidth]{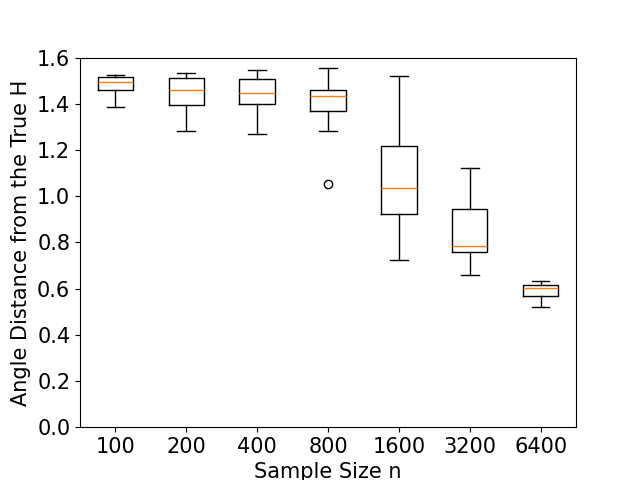}
        \caption{Scenario 3}
    \end{subfigure}
    \hfill %
    \begin{subfigure}[b]{0.48\textwidth}
        \includegraphics[width=\textwidth]{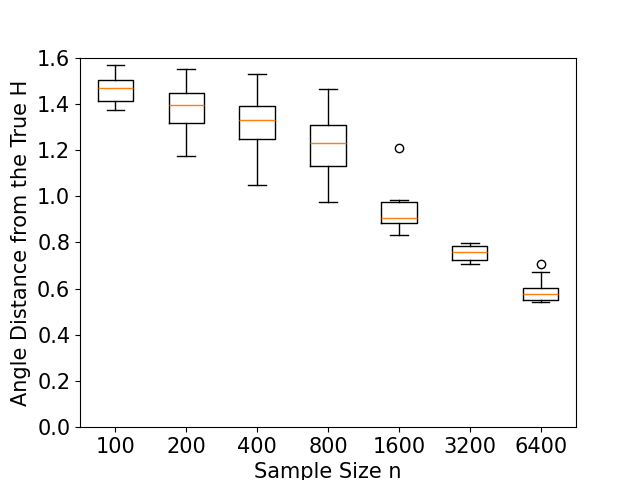}
        \caption{Scenario 4}
    \end{subfigure}
    \caption{Simulation study: Maximum principal angles between the subspaces spanned by the expected outer product matrix $H$ and estimated expected outer product matrix $\hat{H}_{n,t}$ after three round of iteration of the proposed method ($K=4$).}
    \label{fig:dist_to_true_H, large}
\end{figure}

\begin{figure}[hbt!]
    \centering
    \begin{subfigure}[b]{0.48\textwidth}
        \includegraphics[width=\textwidth]{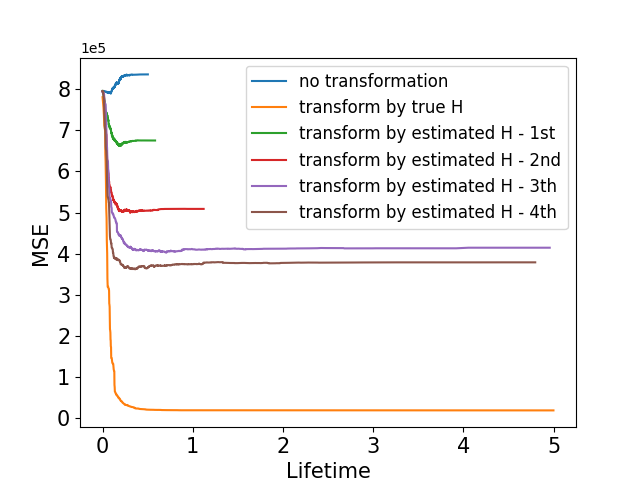}
        \caption{Scenario 1}
    \end{subfigure}
    \begin{subfigure}[b]{0.48\textwidth}
        \includegraphics[width=\textwidth]{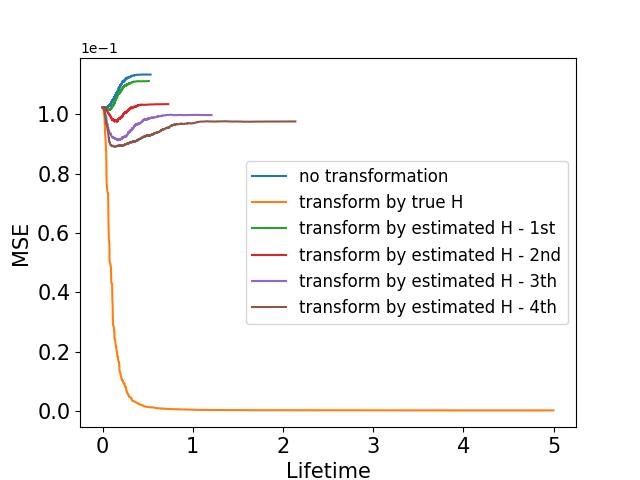}
        \caption{Scenario 2}
    \end{subfigure}
    \begin{subfigure}[b]{0.48\textwidth}
        \includegraphics[width=\textwidth]{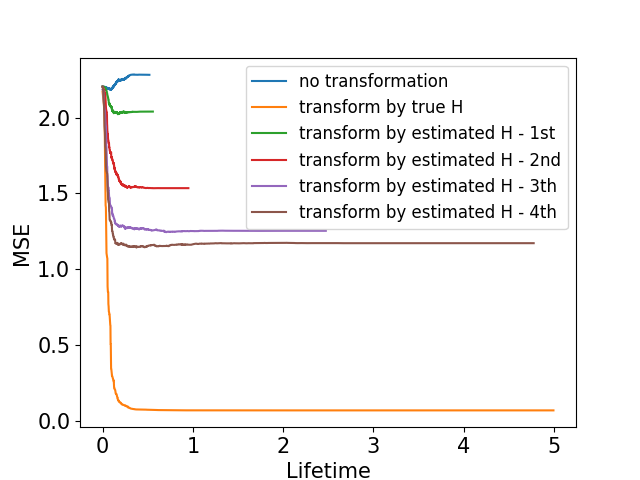}
        \caption{Scenario 3}
    \end{subfigure}
    \begin{subfigure}[b]{0.48\textwidth}
        \includegraphics[width=\textwidth]{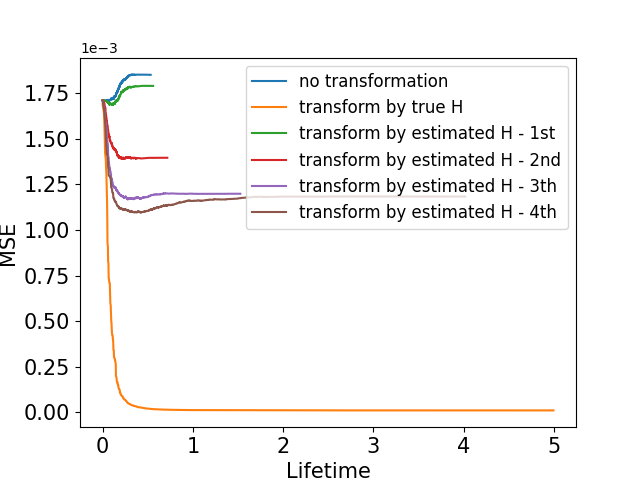}
        \caption{Scenario 4}
    \end{subfigure}
    \caption{Simulation study: Comparison of the test mean squared error (MSE) of the proposed method and the oracle method after three round of iteration of the proposed method ($K=4$).}
    \label{fig:test MSE comparison, large}
\end{figure}

For each scenario, we keep all the parameter settings the same as before, except for the step size, which we set to $t = 1.5$. Moreover, we set the training sample size to at most 6400 and the test sample size to 3200. As we work in high dimensions, we reiterate for three times and report the results below; in other words we run Algorithm~\ref{a:TMFE} with $K = 4$. 

\edit{As shown in Figures \ref{fig:dist_to_true_H, large} and \ref{fig:test MSE comparison, large}, across all scenarios, we observe a similar trend as before. Although the learning tasks becomes harder in higher dimensions, we obtain continued gains with several reiterations of the algorithm, even though the incremental gains begin to reduce as we keep reiterating. Interestingly, we observe that \TRIM struggles with Scenario 2 when compared with other scenarios. This is not obvious in the earlier lower-dimensional simulations. This may be due to the $\min$-based nonlinearity in $g_2$, which limits gradient informativeness and hampers subspace recovery, although such a problem may be much more amenable in lower dimensions. We also notice a mild ``valley" pattern, where the MSE briefly dips before rising again as lifetime elongates, especially in later iterations. Hence, in high-dimensional regimes with limited computational budgets, we note that selecting a shorter lifetime can be beneficial to prevent overfitting, as prolonged iterations may amplify noise from estimating $H$ rather than yield further improvement.}

\section{Real Data Experiments}

In this section, we apply our proposed method to data from two applications: a simulation model for the spread of Ebola in Western Africa considered in~\cite{constantine_howard_2016_asdatasets}, and multiple datasets %
from the UCI Machine Learning Repository \citep{Dua:2019}. 

First, we consider a modified SEIR model for the spread of Ebola in Western Africa \citep{constantine_howard_2016_asdatasets, diaz2018modified}. The SEIR model is a type of compartmental model used in epidemiology to simulate how diseases spread through a population. It is an extension of the basic SIR model, which stands for Susceptible, Infected, and Recovered compartments. The SEIR model adds an additional compartment, Exposed (E), to account for a latency period between when an individual is exposed to a disease and when they become infectious. For this example, we have access to a computational model and its gradients so we can compute the true expected outer product matrix in order to identify the relevant feature subspace %
following a similar setup as in Section \ref{sec:simulation}. As the results are similar to the synthetic data studies, we refer the reader to Appendix~\ref{sec:ebola_numerics} for the data generation model and the detailed results on the performance of our proposed \TRIM method.

Next, we focus on the empirical performance of our proposed method on real data applications, against other popular dimension reduction methods combined with Random Forests. We consider multiple datasets from the UCI Machine Learning Repository \citep{Dua:2019}, OpenML \citep{Vanschoren2014} and Penn Machine Learning Benchmarks \cite{Olson2017PMLB}, which represent a variety of regression tasks on real-valued covariates, see Table \ref{t:datasets}.

\begin{table}
\begin{small}
\begin{tabular}{|c|c|c|c|p{7.5cm}|}
\hline
Task & Dataset & $n$ & $d$ & Description \\
\hline
\multirow{8}{*}{\rotatebox[origin=c]{90}{Regression}}
 & \texttt{Abalone} & 4177 & 8 & Physical measurements of abalones and their shell rings. \\
 & \texttt{Auto Price} & 159 & 15 & Car characteristics and prices. \\
 & \texttt{Diabetes} & 768 & 8 & Clinical measurements from diabetes patients. \\
 & \texttt{Mu284} & 284 & 9 & Molecular energy measurements. \\
 & \texttt{Bank8FM} & 8192 & 8 & Bank customer attributes. \\
 & \texttt{Kin8nm} & 8192 & 8 & Molecular energy measurements. \\
 & \texttt{CPU Activity} & 8192 & 21 & Computer system activity used to predict CPU user-mode time. \\
 & \texttt{MAGIC} & 2000 & 63 & Multiparent advanced generation inter-cross. \\
 \hline
\multirow{4}{*}{\rotatebox[origin=c]{90}{\edit{Classification}}}
 & \texttt{Breast} & 683 & 9 & Features from fine-needle aspirate images of breast masses. \\
 & \texttt{SA-heart} & 462 & 9 & Cardiovascular risk factors from a South African male cohort. \\
 & \texttt{Forex} & 1832 & 10 & AUD/NZD candlestick data for next-day price movement. \\
 & \texttt{Vehicle} & 846 & 18 & Silhouette features for vehicle type classification. \\
\hline
\end{tabular}
\end{small}
\caption{Description of datasets used for the real data experiments.}
\label{t:datasets}
\end{table}

For these datasets, we do not have access to the true relevant feature subspace, so we compare the performance of the proposed iterative method with the baseline Mondrian forest and other state-of-the-art random forest methods. We compare \TRIM with the Mondrian forest baseline (MF) and two popular options: traditional Breiman random forests (RF) \citep{breiman2001random} and Nadaraya-Watson kernel estimators (NW Kernel) \citep{nadaraya1964estimating, watson1964smooth}. We also include two variations on the traditional random forest (SIR + RF and SAVE + RF) and the Nadaraya-Watson kernel estimator (SIR + NW Kernel and SAVE + NW Kernel). As their names suggest, these two variants first extract predictors using SIR or SAVE, and then train a random forest or a Nadaraya-Watson kernel estimator using only these features. This approach of using global dimension reduction predictors within a kernel estimator is common practice, see \cite{adragni2009sufficient}. We use the \texttt{Python} package \texttt{scikit-learn} for the traditional random forests and the code from \cite{loyal2022dimension} for the other methods.\footnote[1]{We were unable, however, to reproduce the Dimension Reduction Forest introduced in \cite{loyal2022dimension} due to an incompatibility of the \texttt{Cython} package used in their code.}

\begin{figure}[hbt!]
    \centering
    \begin{subfigure}[b]{0.48\textwidth}
        \includegraphics[width=\textwidth]{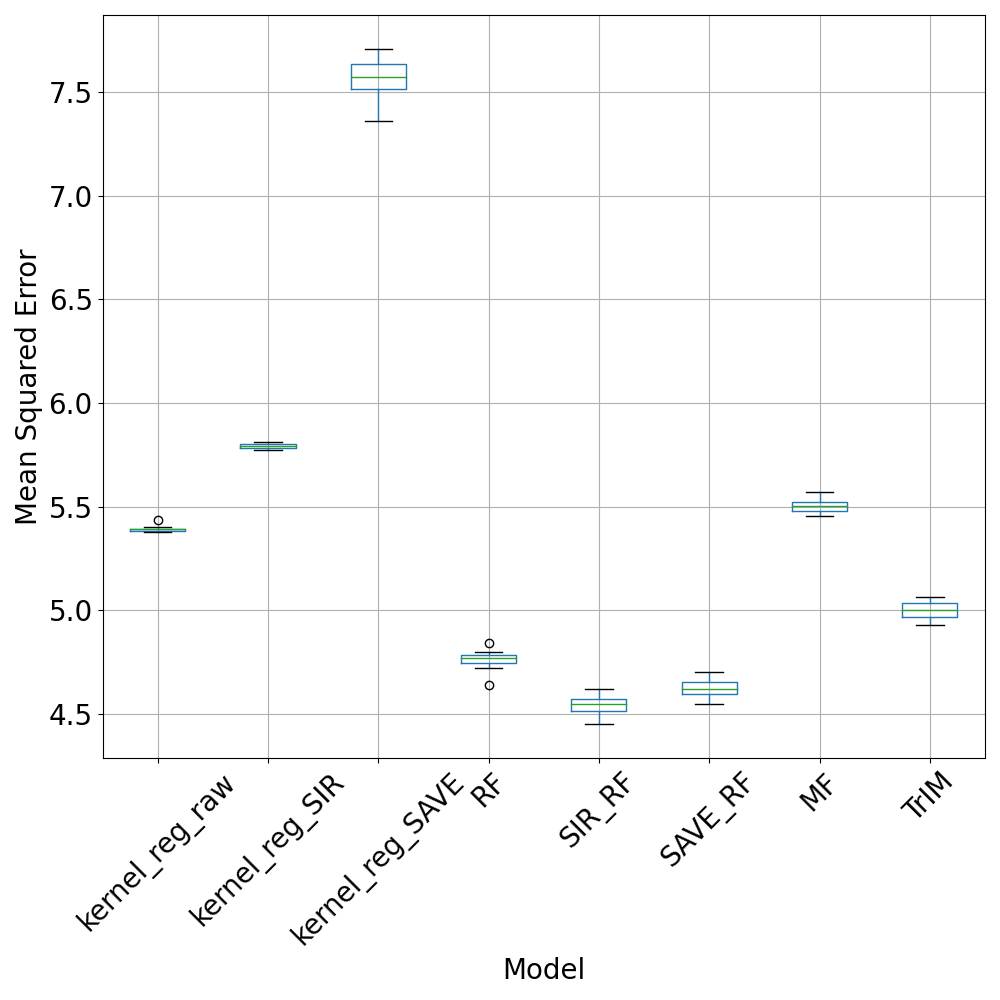}
        \caption{Abalone}
    \end{subfigure}
    \hfill
    \begin{subfigure}[b]{0.48\textwidth}
    \includegraphics[width=\textwidth]{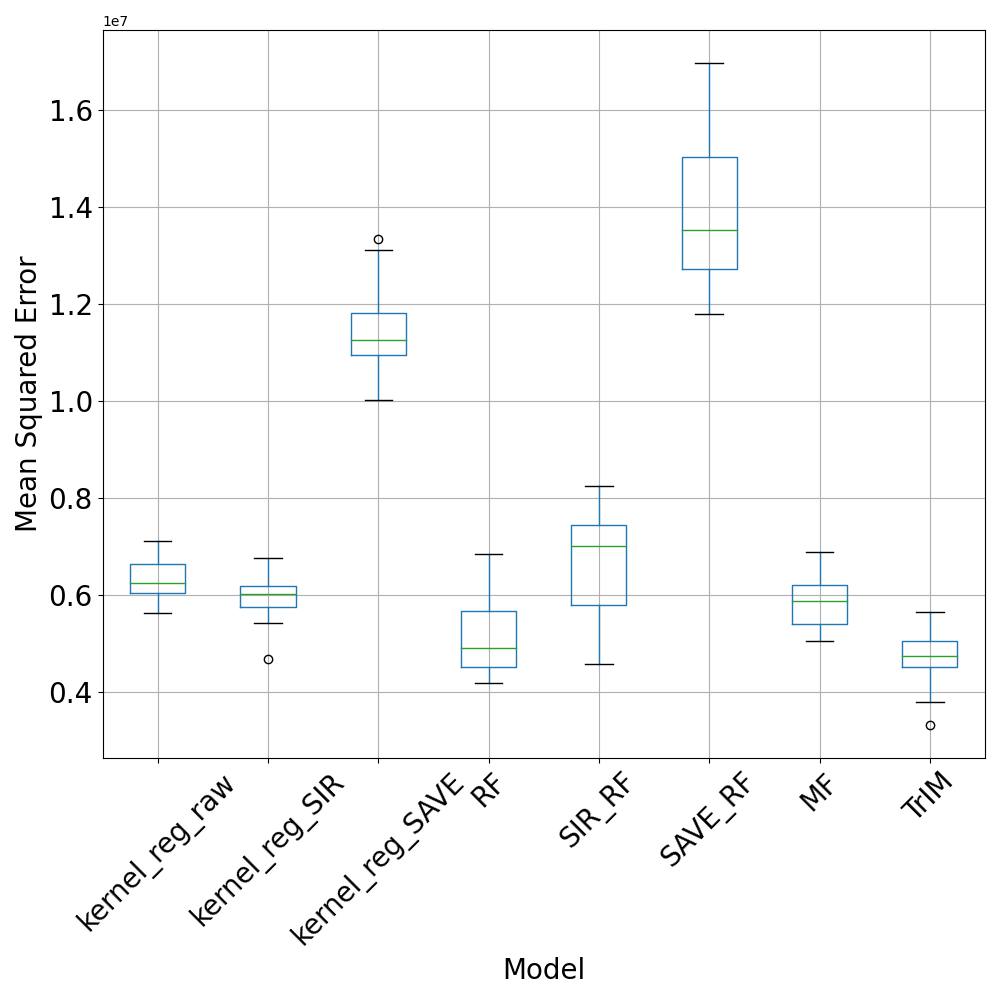}
        \caption{Auto Price}
    \end{subfigure}
    \begin{subfigure}[c]{0.48\textwidth}
    \includegraphics[width=\textwidth]{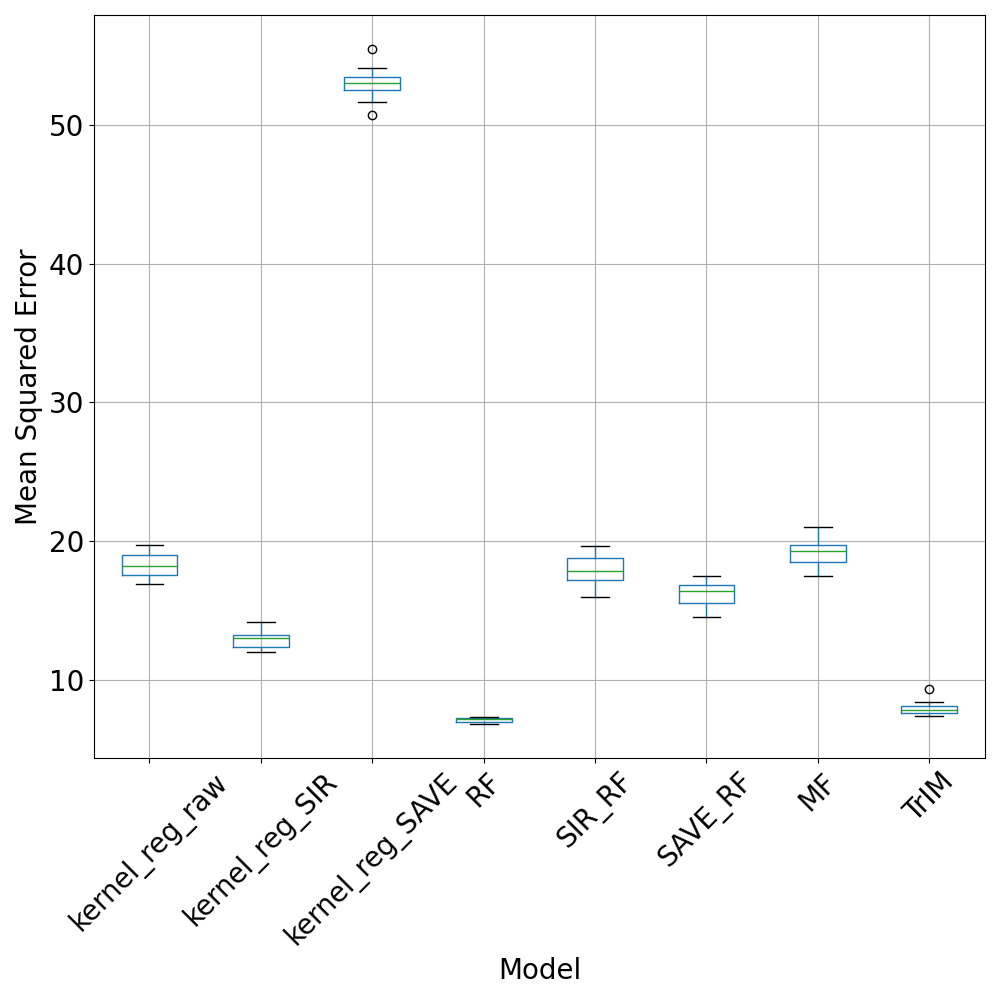}
        \caption{CPU Activity}
    \end{subfigure}
    \begin{subfigure}[d]{0.48\textwidth}
    \includegraphics[width=\textwidth]{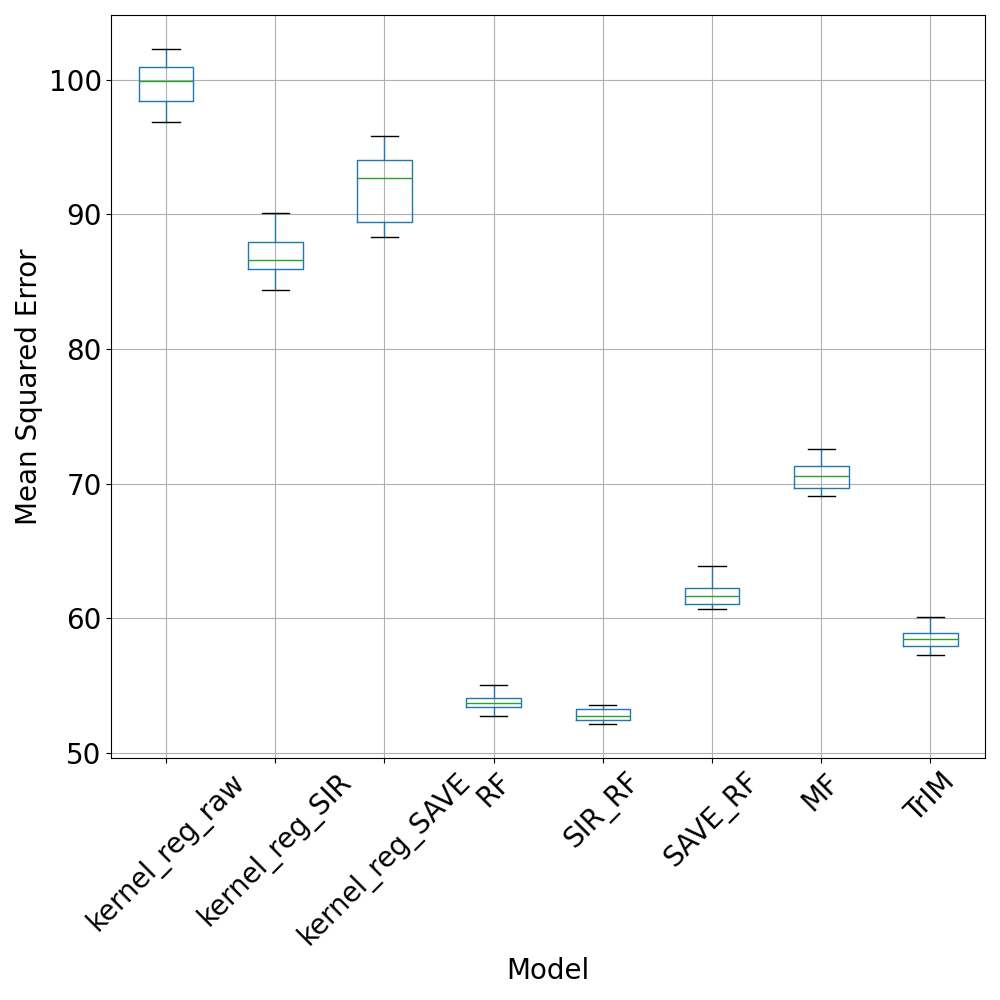}
        \caption{MAGIC}
    \end{subfigure}
    \caption{Comparison of the test mean squared error (MSE) of different methods for real data applications, where the box plot displays the variation across 15 trials. Similar patterns are observed across more datasets, which are presented in Appendix \ref{sec:real_numerics}. The unlabeled column corresponds to a simple baseline predictor that uses the sample mean of the response.}
    \label{fig:real test MSE comparison}
\end{figure}

For every dataset, we conduct a 10-fold cross-validation procedure. This involved dividing the dataset into 10 equal parts, training our models on 9 parts, and testing them on the remaining part. This process was repeated 10 times, each time with a different part used as the test set, to ensure thorough evaluation. After completing these cycles, we calculated the mean squared error (MSE) for the predictions made on the test sets across all 10 folds to assess the models' out-of-sample performance. The cross-validation procedure was repeated 15 times to construct the box-plots shown in Figure \ref{fig:real test MSE comparison}.

For all methods, we used $M=10$ trees for constructing the random forest and applied a default grid search strategy, which systematically explored various combinations of hyperparameters and choose the best possible model found among all hyperparameter choices using the training set. Then we evaluate the performance of the chosen models on the test set. For methods involving the traditional random forest, the specific hyperparameters we focused on were \texttt{min\_samples\_leaf} and \texttt{max\_features}. For \texttt{min\_samples\_leaf}, we evaluated both 1 and 5 as potential values. Meanwhile, for \texttt{max\_features}, we considered a range of options: 2, 4, 6, a third of the features (expressed as 1/3.), the square root of the number of features (\texttt{sqrt}), and the option to use all features (\texttt{None}). For both Mondrian forest methods (MF and \texttt{TrIM}), we choose the lifetime parameter $\lambda \in \{1, 2, 3, 4, 5\}$. For our proposed method (\texttt{TrIM}), we set the maximum number of iterations $T = 2$ and the step size $t \in \{0.1, 0.2, 0.5\}$. \edit{For SIR and SAVE, we keep the original number of features as the number of directions and set number of slices used when calculating the inverse regression curve to 10.}

Figure~\ref{fig:real test MSE comparison} presents the results of our real data experiment: each data point in the plot represents the average MSE across the 10 test folds in cross-validation, and each box shows the variation of the average MSE across 15 random trials of the cross-validation procedure. Our proposed method consistently outperforms the baseline Mondrian forest across all datasets. The proposed method is also competitive against the other methods. The results suggest that \TRIM can effectively identify the relevant feature subspace and leverage this knowledge to improve predictive performance of Mondrian forests on real data applications.

\section{Conclusion and Future Directions}

In this work, we propose a new iterative random forest algorithm (\texttt{TrIM}) that is competitive with state-of-the-art methods and adaptive to low-dimensional structure in the regression function captured by the linear dimension reduction model known as a multi-index model. Convergence rates for the estimator after one iteration of the algorithm are also obtained that account for the estimation error in the relevant feature subspace. %
In future work we will obtain guarantees that take into account multiple steps of the iterative approach, quantifying the improvement in the rate with multiple iterations. 

Another direction of future work is to compare different approaches for exploiting the low-dimensional structure contained in the estimated expected gradient outer product (EGOP) matrix. In this work, we use the EGOP matrix to iteratively rotate the data and construct oblique Mondrian estimators that are biased toward the relevant feature subspace. Alternatively, one can extract the leading $r$ eigenvectors of the EGOP matrix, which requires an expensive eigen-decomposition, but results in a cheaper regression step that is performed in the $r$-dimensional feature subspace defined by the eigenvectors rather than the ambient space. Future work will the study the computational and statistical trade-offs of estimating $r$ and constructing an estimator on a low-dimensional space that %
may introduce bias if $r$ is misspecified.

Other future directions include studying different variants of sufficient dimension reduction and active subspace learning techniques and incorporating them into the computationally efficient and theoretically amenable framework of Mondrian forest prediction. For example, one can consider the setting of sparse sufficient dimension reduction \citep{LisparseSDR2020}, where the relevant feature subspace is assumed to be spanned by a set of sparse vectors consisting of linear combinations of a small number of covariates, a setting that increases the interpretability of the model and is relevant in many applications. Another direction is to study nonlinear dimension reduction mechanisms. Indeed, the linear dimension reduction methods considered in this paper are limited to finding \emph{globally} relevant features. While they have been successful in several applications, the data may exhibit \emph{locally} relevant features that are not captured by linear features. Nonlinear dimension reduction has also been considered in the sufficient dimension reduction literature \citep{Wu_kernelSIR,Lee_kernelSIR} and from the gradient-based perspective \citep{zahm_nonlinear_2022} in active subspace learning, but has not yet been combined with random forest prediction.

\section*{Acknowledgments}
The authors thank the High Performance Computing Center (HPCC) at the Wharton School, University of Pennsylvania for providing computational resources that supported the experimental results presented in this paper. The authors would also like to extend their gratitude to Wenying Deng, Giles Hooker and Ngoc Mai Tran for their valuable suggestions regarding the methodology of this study. RB is grateful for support from the von K\'{a}rm\'{a}n instructorship at Caltech and a Department of Defense (DoD) Vannevar Bush Faculty Fellowship (award N00014-22-1-2790) held by Andrew M. Stuart. EO is grateful for support from NSF Grant DMS-2402234.

\setlength\bibsep{3pt}
\bibliographystyle{plainnat}
\bibliography{Biblio}

\appendix
\section{Approximation of Gradient Outer Product: Proof of Theorem \ref{t:Hn_approx_rate} and Corollary \ref{cor:An_error}}\label{app:estimate_H}

We first consider the proof of the estimation error rate of the estimator $\hat{H}_n := \hat{H}_{n,t}$ as defined in \eqref{e:H_approx} of the expected gradient outer product $H$ given by \eqref{e:EGOP}. Throughout, we will denote by $\EE_{Y}$ the expectation taken with respect to a random variable $Y$.

\subsection{Risk Bound for Shifted Input}

A core lemma to control the error of the gradient estimator is a risk bound for the Mondrian forest estimator with a shift input. This proof follows closely that of Theorem 3 in \cite{mourtada2020minimax}, %
but must take into account \edit{the shifted input. Throughout the following, we define the subset $B_t := [t, 1 - t]^d$ of the support $[0,1]^d$ that is at least $t$ distance away from the boundary.} %

\begin{lemma}\label{p:shifted_risk_bnd}
Suppose $f$ satisfies Assumption \ref{assump:f} and the distribution $\mu$ of the random input $X$ satisfies Assumption \ref{assump:mu}. Let $\hat{f}_{\lambda, n, M}$ be the Mondrian forest estimator with $M$ trees with lifetime $\lambda d$. Let $t \in (0, 1/4)$. For $s \in \{-t,t\}$ and $j \in [d]$, there exists a constant $c_{\mu}$ just depending on $\mu$ such that
\begin{align*}
    &\EE\left[(f(X  + se_j) - \hat{f}_{\lambda,n,M}(X + se_j))^2 \, \big|\,X \in B_t\right] \\
    &\qquad \leq \frac{144 L^2p_1 d}{\lambda^3 p_0(1 - 4t)^d} e^{-\lambda t} + \frac{72L^2d^3}{\lambda^4}\left(\frac{p_1C_p}{p_0^2}\right)^2 + \edit{\frac{16L^2 d^{1+\beta}}{\lambda^{2(1+\beta)}}}\left(\frac{p_1}{p_0}\right)^2 \\ &\qquad  \qquad + \frac{8dL^2}{\lambda^2 M} 
+ \frac{(10\|f\|_{\infty}^2 + 4\sigma^2)}{n}(1 + \lambda)^d (1 + t)^d.   
\end{align*}
As $n \to \infty$, assuming $\lambda, M \to \infty$ and $t \to 0$, the asymptotic behavior of the upper bound satisfies
\begin{align}\label{e:shifted_err_asymp}
\EE\left[(f(X  + se_j) - \hat{f}_{\lambda,n,M}(X + se_j))^2 \, \big|\,X \in B_t\right] \lesssim  %
\frac{\lambda^d}{n} + \edit{\frac{L^2}{\lambda^{\min\{3, 2(1+\beta)\}}}} + \frac{L^2}{\lambda^2M} . 
\end{align}
\end{lemma}

\begin{proof}
We proceed as in the proof of Theorem 3 in \cite{mourtada2020minimax} and consider the following upper bound on the pointwise risk of the forest estimator decomposed into bias and variance terms:
\begin{align}\label{e:bias-var_forests}
     \EE[(f(x) - \hat{f}_{\lambda,n, M}(x))^2] \leq  2\EE[(f(x)- \bar{f}_{\lambda,M}(x))^2] + 2\EE[(\bar{f}_{\lambda,M}(x) - \hat{f}_{\lambda,n,M}(x) )^2],
\end{align}
where we define for each $m$ and $x \in \RR^d$, 
\[\bar{f}^{(m)}_{\lambda}(x) := \EE_X[f(X)|X \in Z^{\lambda,(m)}_x],\]
and $\bar{f}_{\lambda,M}(x) := \frac{1}{M}\sum_{m=1}^M \bar{f}^{(m)}_{\lambda}(x)$.

\paragraph{Variance term.} We first consider the variance term, and condition on the Mondrian tessellation $\mathcal{P}(\lambda)$ with lifetime $\lambda$ to obtain the variance of the tree estimator corresponding to a fixed tessellation. Note that the assumption $\mathcal{D}_n$ and $\mathcal{P}(\lambda)$ are independent allows us to take the expectations separately. Also, recall that if no points of $\{X_1, \ldots, X_n\}$ fall in $Z_x^{\lambda}$, then $\hat{f}_{\lambda,n}(x) = 0$. For each $C \in \mathcal{P}(\lambda)$, let $\mathcal{N}_n(C) = \sum_{i=1}^n 1_{\{X_i \in C\}}$ be the number of covariates inside $C$ and define $p_{\lambda,C} := \PP_X(X \in C)$. Then,
\begin{align*}
    \EE_{\mathcal{D}_n}\left[(\bar{f}_{\lambda}(x) - \hat{f}_{\lambda,n}(x))^2\right] 
     &= \sum_{\substack{C \in \mathcal{P}(\lambda): \\ C \cap K \neq \emptyset}} \mathbf{1}_{\{x \in C\}} \EE_{\mathcal{D}_n}\left[\left(\EE_X[f(X)| X \in C] - \frac{\sum_{i=1}^n Y_i \mathbf{1}_{\{X_i \in C\}}}{\mathcal{N}_n(C)}\right)^2\right],
\end{align*}
where $K := [0,1]^d$ is the support of $\mu$. From the proof of Lemma 20 in \cite{OReillyTran2021minimax} we have that the expectation in the sum has the upper bound
\begin{align*}
&\EE_{\mathcal{D}_n}\left[\left(\EE_{X}[f(X)| X \in C] - \frac{\sum_{i=1}^n Y_i\mathbf{1}_{\{X_i \in C\}}}{\mathcal{N}_n(C)}\right)^2\right] \\
&\leq (2\|f\|_{\infty}^2 + \sigma^2) \sum_{k=1}^n \binom{n}{k} p_{\lambda, C}^{k} (1 - p_{\lambda, C})^{n-k} k^{-1} + \|f\|_{\infty}^2(1 - p_{\lambda,C})^n.
\end{align*}
Now, note that for $B \sim \mathrm{Binomial}(n, p_{\lambda, C})$,
\begin{align*}
 \sum_{k=1}^n \binom{n}{k} np_{\lambda, C}^{k+1} (1 - p_{\lambda, C})^{n-k} k^{-1} &= \EE[B] \EE[B^{-1}1_{\{B > 0\}}],
\end{align*}
and $\EE[B] \EE[B^{-1}1_{\{B > 0\}}] \leq \frac{2n p_{\lambda,C}}{(n+1)p_{\lambda,C}} \leq 2$ \cite[Lemma 4.1]{Gyorfi}. Also, the upper bounds $1 - x \leq e^{-x}$ and $xe^{-x} \leq e^{-1}$ for all $x \geq 0$ imply
\begin{align*}
  np_{\lambda, C}(1 - p_{\lambda,C})^n \leq  e^{-1} \leq 1. 
\end{align*}
Thus we have the pointwise upper bound
\begin{align*}
    \EE_{\mathcal{D}_n}\left[(\bar{f}_{\lambda}(x) - \hat{f}_{\lambda,n}(x))^2\right] 
    &\leq \frac{(4\|f\|_{\infty}^2 + 2\sigma^2)}{n} \sum_{\substack{C \in \mathcal{P}(\lambda): \\ C \cap K \neq \emptyset}} 1_{\{x \in C\}}  p_{\lambda, C}^{-1}  + \frac{\|f\|_{\infty}^2}{n} \sum_{\substack{C \in \mathcal{P}(\lambda): \\ C \cap K \neq \emptyset}} 1_{\{x \in C\}}p_{\lambda, C}^{-1} \\
    &\leq \frac{(5\|f\|_{\infty}^2 + 2\sigma^2)}{n} \sum_{\substack{C \in \mathcal{P}(\lambda): \\ C \cap K \neq \emptyset}} 1_{\{x \in C\}}p_{\lambda, C}^{-1}.
\end{align*}

Now, note that for $t \geq 0$ the support of $X + t e_j$ is contained in $(1 + t)K$ since $K = [0,1]^d$. Taking the expectation with respect to the shifted input gives
\begin{align*}
&\EE_{X}\left[\EE_{\mathcal{D}_n}\left[(\bar{f}_{\lambda}(X + te_j) - \hat{f}_{\lambda,n}(X + te_j))^2\right] \right] \\
    &\leq \frac{5\|f\|_{\infty}^2 + 2\sigma^2}{n} \sum_{\substack{C \in \mathcal{P}(\lambda): \\ C \cap K \neq \emptyset}} \PP(X + te_j \in C)p_{\lambda, C}^{-1} \\
    & \leq \frac{(5\|f\|_{\infty}^2 + 2\sigma^2)p_1}{np_0} \sum_{\substack{C \in \mathcal{P}(\lambda)}} \mathrm{vol}_d((1 + t)K \cap C)\mathrm{vol}_d(K \cap C)^{-1} \\
    & \leq \frac{(5\|f\|_{\infty}^2 + 2\sigma^2)p_1}{np_0} N_{\lambda}(K) (1 + t)^d.
\end{align*}
Finally, taking the expectation with respect to $\mathcal{P}(\lambda)$ and applying Proposition 2 in \cite{mourtada2020minimax}, we have
\begin{align*}
&\EE\left[(\bar{f}_{\lambda}(X + te_j) - \hat{f}_{\lambda,n}(X + te_j))^2\right] \leq \frac{(5\|f\|_{\infty}^2 + 2\sigma^2)p_1}{np_0}(1 + \lambda)^d (1 + t)^d.
\end{align*}
A similar argument gives
\begin{align*}
&\EE\left[(\bar{f}_{\lambda}(X -te_j) - \hat{f}_{\lambda,n}(X -te_j))^2\right] \leq \frac{(5\|f\|_{\infty}^2 + 2\sigma^2)p_1}{np_0}(1 + \lambda)^d (1 + t)^d.
\end{align*}
The conditional probabilities then satisfy, for $s \in \{-t,t\}$,
\begin{align*}
&\EE\left[(\bar{f}_{\lambda}(X + se_j) - \hat{f}_{\lambda,n}(X + se_j))^2 \, \big| \, X \in B_t\right] \leq \frac{(5\|f\|_{\infty}^2 + 2\sigma^2)p_1}{np_0^2(1 - 2t)^d}(1 + \lambda)^d (1 + t)^d.
\end{align*}

\paragraph{Bias term.} We now turn to bounding the bias term. For $x \in \mathrm{supp}(\mu) = [0,1]^d$, define
\[\tilde{f}_{\lambda}(x) = \EE_{\mathcal{P}}[\bar{f}_{\lambda}(x)] = \EE\left[ \frac{1}{\mu(Z_x^{\lambda})} \int_{Z^{\lambda}_x} f(y) \dint \mu(x)\right].\]
\edit{From the proof of Theorem 3 in \cite{mourtada2020minimax},
\begin{align*}
    \EE_{\mathcal{P}}\left[|f(x) - \bar{f}_{\lambda}(x)|^2 \right] \leq (f(x) - \tilde{f}_{\lambda}(x))^2 + \frac{4dL^2}{\lambda^2 M},
\end{align*}
and we can extract the pointwise upper bound %
\begin{align*}
(f(x) - \tilde{f}_{\lambda}(x))^2 &\leq \frac{36L^2}{\lambda^2}\sum_{j=1}^d e^{-\lambda\min\{x_j, 1 - x_j\}} + \frac{36L^2d^3}{\lambda^4}\left(\frac{p_1C_p}{p_0^2}\right)^2 + \frac{8L^2 d^{1+\beta}}{\lambda^{2(1+\beta)}}\left(\frac{p_1}{p_0}\right)^2. %
\end{align*}
It remains to take the conditional expectation of the first term with respect to the shifted input. By the assumption $0 < t < \frac{1}{4}$,
\begin{align*}
 \EE\left[e^{-\lambda\min\{X_j + te_j, 1 - X_j + te_j\}} \,\big|\, X \in B_t \right] 
 &\leq \frac{p_1}{p_0(1 - 2t)^d }\int_{t}^{1 - t} e^{-\lambda \min\{s + t, 1 - s + t\}} \dint s \\
 &= \frac{p_1}{p_0(1 - 2t)^d} \int_{2t}^{1} e^{-\lambda \min\{s, 1 - s\}} \dint s \\
 &= \frac{p_1}{p_0(1 - 2t)^d}\left(\int_{2t}^{1/2} e^{-\lambda s} \dint s + \int_{1/2}^{1} e^{-\lambda (1 - s)} \dint s  \right)\\
  &\leq \frac{2p_1}{\lambda p_0(1 - 2t)^d}.%
\end{align*}
A similar argument gives
\begin{align*}
 \EE\left[e^{-\lambda\min\{X_j - te_j, 1 - X_j - te_j\}} \, \big|\, X \in B_t\right] 
  &\leq \frac{2p_1}{\lambda p_0(1 - 2t)^d}. %
\end{align*}
Combining the bias and variance bounds, we have for $s \in \{-t,t\}$, %
\begin{align*}
&\EE\left[(f(X  + se_j) - \hat{f}_{\lambda,n,M}(X + se_j))^2 \, \big|\, X \in B_t \right] \\
&\qquad \leq \frac{144 L^2p_1 d}{\lambda^3 p_0(1 - 2t)^d} + \frac{72L^2d^3}{\lambda^4}\left(\frac{p_1C_p}{p_0^2}\right)^2 + \frac{16L^2 d^{1+\beta}}{\lambda^{2(1+\beta)}}\left(\frac{p_1}{p_0}\right)^2 \\ &\qquad  + \frac{8dL^2}{\lambda^2 M} 
+ \frac{(10\|f\|_{\infty}^2 + 4\sigma^2)}{n (1 - 4t)^d}(1 + \lambda)^d (1 + t)^d,    
\end{align*}
which gives the first conclusion of the Theorem. In the asymptotic regime where $\lambda, M, n \to \infty$ and $t \to 0$, we obtain \eqref{e:shifted_err_asymp}.} %
\end{proof}

\subsection{Gradient estimation error}

\edit{We next prove a bound on the estimator error of the gradient of the regression function using a Mondrian forest estimator. The approach is similar to the proof of Lemma 4 in \cite{Trivedietal2014}, where gradient estimator is obtained using a continuous kernel regressor rather than a piecewise constant Mondrian forest regressor here.}

\begin{prop}\label{l:gradvec_error}
Assume $\mu$ and $f$ satisfy Assumptions \ref{assump:mu} and \ref{assump:f}. Let $\hat{f}_n = \hat{f}_{n, \lambda, M}$ be the Mondrian forest estimator as in Lemma \ref{p:shifted_risk_bnd}. As $n \to \infty$, assuming $\lambda, M \to \infty$ and $t \to 0$,
\edit{\begin{align}\label{e:grad_err_2}
 \EE\left[\|\nabla f(X) - \hat{\nabla} \hat{f}_{n}(X)\|^2_2 \right] \lesssim %
\frac{1}{t^2}\left(\frac{\lambda^d}{n} + \frac{L^2}{\lambda^{\min\{3, 2 + 2\beta\}}} + \frac{L^2}{\lambda^2M} \right) + L^2t^{2\beta},  
\end{align}
and 
\begin{align}\label{e:grad_err_1}
\EE\left[\|\nabla f(X) - \hat{\nabla} \hat{f}_{n}(X)\|_2 \right] \lesssim
\frac{1}{t}\left(\frac{\lambda^d}{n} + \frac{L^2}{\lambda^{\min\{3, 2 + 2\beta\}}} + \frac{L^2}{\lambda^2M} \right)^{\frac{1}{2}} + Lt^{\beta},
\end{align}
where the expectations are taken with respect to $X, \mathcal{D}_n$, and $\mathcal{P}$. Then, for $\beta \leq \frac{1}{2}$, letting 
$\lambda \sim n^{\frac{1}{d + 2+2\beta}}$, $M \gtrsim n^{\frac{1}{d + 2 + 2\beta}}$ and $t \sim n^{-\frac{1}{d + 2+2\beta}}$ 
gives for $k \in \{1,2\}$,
\begin{align*}
    \EE\left[\|\hat{\nabla} \hat{f}_n(X) -  \nabla f (X) \|^k_2 \right]  \lesssim %
    n^{-\frac{k \beta}{d+2 + 2\beta}}.
\end{align*}
For $\beta > \frac{1}{2}$, letting %
$\lambda \sim n^{\frac{1}{d + 3}}$, $M \gtrsim n^{\frac{1}{d + 3}}$ and $t \sim n^{-\frac{3}{4d + 12}}$ 
gives for $k \in \{1,2\}$,
\begin{align*}
    \EE\left[\|\hat{\nabla} \hat{f}_n(X) -  \nabla f (X) \|^k_2 \right]  \lesssim %
    n^{-\frac{3k}{4d+12}}.
\end{align*}}
\end{prop}

\begin{proof}
First define the vector $\hat{\nabla} f(x) := (\triangle_{i,t}f(x))_{i \in [d]}$.%

To show \eqref{e:grad_err_2}, we first use the inequality  $\|a - b\|^2 \leq 2\|a\|^2 + 2\|b\|^2$ to obtain
\begin{align}\label{e:split_bnd}
    \mathbb{E}[\|\hat{\nabla} \hat{f}_n(X) -  \nabla f (X)\|^2_2] %
    &\leq 2\EE[\|\hat{\nabla} \hat{f}_n(X) -  \hat{\nabla} f (X)\|^2_2] +  2\| \nabla f (X) - \hat{\nabla} f(X) \|^2_2.
\end{align}

To obtain an upper bound on the second term above, we see that by the assumption on $f$, for each $i = 1, \ldots, d$ and $x \in \RR^d$, \edit{
\begin{align*}
 f(x + te_j) - f(x - te_j) &= \int_{-t}^t \partial_jf(x + se_j) \dint s \leq \int_{-t}^t \left(\partial_jf(x) + L\|se_j\|^{\beta}_2\right) \dint s \\
 &= 2t\partial_jf(x) + 2L\int_0^t s^{\beta} \dint s = 2t\partial_jf(x) + \frac{2L}{1+\beta} t^{1 + \beta}.
\end{align*}
Then, 
\begin{align}\label{e:deriv_error}
 \left|\partial_j f(x) - \frac{f(x + te_j) - f(x - t e_j)}{2t}\right|
    &\leq  \frac{L t^{\beta}}{1+\beta},
\end{align}
and thus,
\begin{align}\label{e:bnd_III}
\| \nabla f (x) - \hat{\nabla} f(x) \|^2_2
&= \sum_{j=1}^d \left(\partial_j f(x) - \frac{f(x + te_j) - f(x - t e_j)}{2t}\right)^2 \leq \sum_{j=1}^d \frac{L^2 t^{2\beta}}{(1+\beta)^2}  = \frac{L^2dt^{2\beta}}{(1 + \beta)^2}.   
\end{align}}

We now obtain a bound on the first expectation above. First, for fixed $x \in \RR^d$ and $j \in [d]$ we have
\begin{align}\label{e:quotient_error}
&\left|\frac{f(x + te_j) - f(x - te_j)- \hat{f}_{n}(x + te_j) + \hat{f}_{n}(x - te_j)}{ 2t}\right| \nonumber \\
& \qquad \leq   \frac{|f(x + t e_j) - \hat{f}_{n}(x + te_j)|}{2t}+ \frac{|f(x - te_j) - \hat{f}_{n}(x - te_j)|}{2t}.
\end{align}
Then, 
\begin{align*}
   \|\hat{\nabla} \hat{f}_n(x) -  \hat{\nabla} f (x)\|^2_2 
    &\leq \frac{1}{2t^2} \sum_{j=1}^d \left(|f(x + t e_j) - \hat{f}_{n}(x + te_j)|^2 + |f(x - te_j) - \hat{f}_{n}(x - te_j)|^2\right).%
\end{align*}
Taking the expectation with respect to $\mathcal{P}$ and $\mathcal{D}_n$ gives the upper bound
\begin{align*}
    \EE[\|\hat{\nabla} \hat{f}_n(x) -  \hat{\nabla} f (x) \|^2_2] & \leq  \frac{1}{2t^2}\sum_{j=1}^d \left(\EE[|f(x + t e_j) - \hat{f}_{n}(x + te_j)|^2]  + \EE[|f(x - te_j) - \hat{f}_{n}(x - te_j)|^2]\right).
\end{align*}
Now recall that for $x \in [0,1]^d$, if $x + se_j \notin [0,1]^d$, then 
\begin{align*}
    \EE[|f(x + se_j) - \hat{f}_n(x + se_j)|^2] = \EE[|f(x + se_j)|^2] \leq L^2(1 + s)^2.
\end{align*}
In particular, note that the above bound holds for all $x \notin B_t$. Then taking the expectation with respect to $X$, we have
\begin{align*}
 \EE[|f(X + se_j) - \hat{f}_n(X + se_j)|^2] \leq L^2(1 + s)^2 \mathbb{P}(X \notin B_t) + \EE[|f(X + se_j) - \hat{f}_n(X + se_j)|^2 | X \in B_t].   
\end{align*}
Then by Proposition \ref{p:shifted_risk_bnd}, there exists an absolute constant $c > 0$ for $s \in \{-t,t\}$,
\begin{align}\label{e:bnd_I}
    &\EE[\|\hat{\nabla} f_n(X) -  \hat{\nabla} f (X) \|^2_2 ] \leq L^2(1+s)^2(1 - (1-2t)^d) \nonumber \\ & + \frac{cd}{t^2}\bigg(\frac{L^2p_1 d }{\lambda^3 p_0(1 - 2t)^d}  + \frac{L^2d^3}{\lambda^4}\left(\frac{p_1C_p}{p_0^2}\right)^2 + \frac{L^2 d^{1+\beta}}{\lambda^{2 + 2\beta}}\left(\frac{p_1}{p_0}\right)^2 + \frac{dL^2}{\lambda^2 M} 
+ \frac{(\|f\|_{\infty}^2 + \sigma^2)}{n}(1 + \lambda)^d (1 + t)^d\bigg).
\end{align}

Combining the bounds %
\eqref{e:bnd_III} and \eqref{e:bnd_I} with \eqref{e:split_bnd} gives:
\begin{align*}
&\EE[\|\hat{\nabla} \hat{f}_n(X) -  \nabla f (X) \|^2_2] \leq \frac{L^2dt^{2\beta}}{(1 + \beta)^2} + L^2(1+s)^2(1 - (1-2t)^d) \\
&+\frac{cd}{t^{2}}\bigg(\frac{L^2p_1 d}{\lambda^3 p_0(1 - 2t)^d}  + \frac{L^2d^3}{\lambda^4}\left(\frac{p_1C_p}{p_0^2}\right)^2 + \frac{L^2 d^{1+\beta}}{\lambda^{2 + 2\beta}}\left(\frac{p_1}{p_0}\right)^2 + \frac{dL^2}{\lambda^2 M} 
+ \frac{(\|f\|_{\infty}^2 + \sigma^2)}{n}(1 + \lambda)^d (1 + t)^d\bigg).%
\end{align*}
Consider the asymptotic regime $t \to 0$ and $\lambda, n \to \infty$ then gives \eqref{e:grad_err_2}.
Minimizing the above bound with respect to $\lambda$ and $t$ gives the final asymptotic result for $k = 2$.

\edit{To show \eqref{e:grad_err_1}, we have by the the triangle inequality, for $x \in [0,1]^d$,
\begin{align}\label{e:split_bnd_1}
    \EE[\|\hat{\nabla} \hat{f}_n(x) -  \nabla f (x)\|_2] 
    &\leq \mathbb{E}[\|\hat{\nabla} \hat{f}_n(x) -  \hat{\nabla} f (x)\|_2]  + \| \nabla f (x) - \hat{\nabla} f(x) \|_2. 
\end{align}
Then, we have by \eqref{e:quotient_error},
\begin{align*}
   \EE[\|\hat{\nabla} \hat{f}_n(x) -  \hat{\nabla} f (x)\|_2] &\leq \EE[\|\hat{\nabla} \hat{f}_n(x) -  \hat{\nabla} f (x)\|_1] \\
    &\leq \frac{1}{2t} \sum_{j=1}^d \left(\EE[|f(x + t e_j) - \hat{f}_{n}(x + te_j)|] + \EE[|f(x - te_j) - \hat{f}_{n}(x - te_j)|]\right).
\end{align*}
Taking the expectation with respect to $X$ and applying Jensen's inequality, we thus have
\begin{align*}
 \EE[|f(X + se_j) - \hat{f}_n(X + se_j)|] &\leq L(1 + s) \mathbb{P}(X \notin B_t) + \EE[|f(X + se_j) - \hat{f}_n(X + se_j)| | X \in B_t] \\
 &\leq L(1 + s) \mathbb{P}(X \notin B_t) + \EE[|f(X + se_j) - \hat{f}_n(X + se_j)|^2 | X \in B_t]^{1/2}.   
\end{align*}
Combing the above bound with Proposition \ref{p:shifted_risk_bnd} and \eqref{e:bnd_III}, we have
\begin{align}\label{e:bnd_I_1}
    &\EE[\|\hat{\nabla} \hat{f}_n(X) -  \nabla f (X) \|_2 ] \leq L(1 + s)p_1(1 - (1-2t)^d) + \frac{Ldt^{\beta}}{1+\beta} \nonumber \\ &+ \frac{\sqrt{cd}}{t}\bigg(\frac{L^2p_1 d }{\lambda^3 p_0(1 - 2t)^d}  + \frac{L^2d^3}{\lambda^4}\left(\frac{p_1C_p}{p_0^2}\right)^2 + \frac{L^2 d^{2}}{\lambda^4}\left(\frac{p_1}{p_0}\right)^2 + \frac{dL^2}{\lambda^2 M} 
+ \frac{(\|f\|_{\infty}^2 + \sigma^2)}{n}(1 + \lambda)^d (1 + t)^d\bigg)^{1/2}.
\end{align}
Asymptotically as $t \to 0$ and $\lambda, n \to \infty$ the above bound implies \eqref{e:grad_err_1} and again minimizing the bound with respect to $\lambda$ and $t$ gives the final asymptotic result for $k = 1$.}

\end{proof}

\subsection{Proof of Theorem \ref{t:Hn_approx_rate} and Corollary \ref{cor:An_error}}

\edit{We next prove a bound on the EGOP estimate, closely following the approach in \cite{Trivedietal2014}, with an adaptation that uses Proposition \ref{l:gradvec_error} on the gradient estimation error using Mondrian estimators.}

\begin{proof}[Proof of Theorem \ref{t:Hn_approx_rate}]
We first observe the following bound: by the triangle inequality
\begin{align}\label{e:H_n_triangle}
\|\hat{H}_n - H\| \leq \|\hat{H}_n - H_n\| + \|H_n - H\|,
\end{align}
where
\begin{align*}
 H_n := \frac{1}{n} \sum_{i=1}^n \nabla f(x_i)\nabla f(x_i)^T.
\end{align*}
By Lemma 2 in \cite{Trivedietal2014}, which follows directly from a concentration bound for random matrices (see \cite{tropp2012user}) we have: with probability at least $1-\delta$ over the i.i.d samples $\{X_i\}_{i=1}^n$,
\begin{align*}
    \|H_n - H\| \leq \frac{6L^2}{\sqrt{n}}\left(\sqrt{\ln d} + \sqrt{\ln \frac{1}{\delta}}\right).
\end{align*}
Then, letting $t = \frac{6L^2}{\sqrt{n}}\left(\sqrt{\ln d} + \sqrt{\ln \frac{1}{\delta}}\right)$ gives
\begin{align*}
    \EE[\|H_n - H\|] &= \int_{0}^{\infty} \PP(\|H_n - H\| \geq t) \dint t \leq \int_0^{\infty} e^{-\left(\frac{\sqrt{n} y}{6L^2} - \sqrt{ \ln d}\right)^2} \dint y \\
    &\leq \int_{\RR} e^{-\frac{n}{36L^4}\left(y - 6L^2\sqrt{\frac{\ln d}{n}}\right)^2} \dint y = \frac{6\sqrt{\pi}L^2}{\sqrt{n}}.
\end{align*}

For the first term in \eqref{e:H_n_triangle}, the same argument as in Lemma 3 of \cite{Trivedietal2014} gives
\begin{align*}
&\|\hat{H}_n - H_n\| =  \| \frac{1}{n} \sum_{i=1}^n \hat{\nabla}\hat{f}_n(x_i) \hat{\nabla} \hat{f}_n(x_i)^T - \frac{1}{n} \sum_{i=1}^n \nabla f (x_i) \nabla f (x_i)^T\| \\
&\leq \frac{1}{n} \sum_{i=1}^n \|\hat{\nabla} \hat{f}_n(x_i) \hat{\nabla} \hat{f}_n(x_i)^T - \nabla f (x_i) \nabla f (x_i)^T\| \\
&\leq \frac{1}{n} \sum_{i=1}^n \|\hat{\nabla} \hat{f}_n (x_i) + \nabla f (x_i)\|_2 \| \hat{\nabla} \hat{f}_n(x_i) -  \nabla f (x_i) \|_2 \\
&\leq \frac{1}{n} \sum_{i=1}^n 2\|\nabla f (x_i)\|_2 \| \hat{\nabla} \hat{f}_n(x_i) -  \nabla f (x_i) \|_2 + \frac{1}{n} \sum_{i=1}^n \|\hat{\nabla} \hat{f}_n (x_i) - \nabla f (x_i)\|_2 \| \hat{\nabla} \hat{f}_n(x_i) -  \nabla f (x_i) \|_2 \\
&\leq \frac{2L}{n} \sum_{i=1}^n \| \hat{\nabla} \hat{f}_n(x_i) -  \nabla f (x_i) \|_2 +  \frac{1}{n} \sum_{i=1}^n \|\hat{\nabla} \hat{f}_n (x_i) - \nabla f (x_i)\|^2_2,
\end{align*}
where we used in the second inequality that for $a, b \in \RR^d$,
\begin{align*}
    \|aa^T - bb^T\| \leq \|(b + a)(a - b)^T\| = \|b + a\| \|a - b\|,
\end{align*}
and for the last inequality, we used the smoothness assumption on $f$.

Taking the expectation with respect to $\mathcal{X}_n$ gives
\begin{align*}
 \EE_{\mathcal{X}_n}[\|\hat{H}_n - H_n\|] &\leq  2L\EE_{X} [\| \hat{\nabla} \hat{f}_n(X) -  \nabla f (X) \|_2]  + \EE_{X}\left[\|\hat{\nabla} \hat{f}_n (X) - \nabla f (X)\|_2^2\right].
\end{align*}
Taking the expectation with respect to $\mathcal{D}_n$ and $\mathcal{P}$ and using Proposition \ref{l:gradvec_error} %
gives: \edit{for $\beta \leq \frac{1}{2}$,  
$\lambda \sim n^{\frac{1}{d + 2+2\beta}}$, $M \gtrsim n^{\frac{1}{d + 2 + 2\beta}}$ and $t \sim n^{-\frac{1}{d + 2+2\beta}}$
\begin{align*}
    \EE[\|\hat{H}_n - H_n\|] \lesssim  
    n^{-\frac{\beta}{d+2 + 2\beta}}.
\end{align*}
and for $\beta > \frac{1}{2}$,} $\lambda \sim n^{\frac{1}{d + 3}}$, $M \gtrsim n^{\frac{1}{d + 3}}$ and $t \sim n^{-\frac{3}{4d + 12}}$,
\begin{align*}
    \EE[\|\hat{H}_n - H_n\|] %
    &\lesssim %
    n^{-\frac{3}{4d+12}}.
\end{align*}
Thus by \eqref{e:H_n_triangle}, the conclusion follows.

\end{proof}

\begin{proof}[Proof of Corollary \ref{cor:An_error}]
By the triangle inequality and reverse triangle inequality for the $\|\cdot\|_{2,1}$ norm,
\begin{align*}
    \|\hat{A}_n - A\|_{2,1} &= d\left\|\frac{\hat{H}_n}{\|\hat{H}_n\|_{2,1}} - \frac{H}{\|H\|_{2,1}}\right\|_{2,1} = d\left\|\frac{\|H\|_{2,1}\hat{H}_n - \|\hat{H}_n\|_{2,1}H}{\|\hat{H}_n\|_{2,1}\|H\|_{2,1}}\right\|_{2,1} \\
    &\leq d\left\|\frac{\|H\|_{2,1}\hat{H}_n -\|\hat{H}_n\|_{2,1}\hat{H}_n + \|\hat{H}_n\|_{2,1}\hat{H}_n - \|\hat{H}_n\|_{2,1}H}{\|\hat{H}_n\|_{2,1}\|H\|_{2,1}}\right\|_{2,1} \\
    &\leq d\left\|\frac{\|H\|_{2,1}\hat{H}_n -\|\hat{H}_n\|_{2,1}\hat{H}_n}{\|\hat{H}_n\|_{2,1}\|H\|_{2,1}}\right\|_{2,1} + d\left\|\frac{\|\hat{H}_n\|_{2,1}\hat{H}_n - \|\hat{H}_n\|_{2,1}H}{\|\hat{H}_n\|_{2,1}\|H\|_{2,1}}\right\|_{2,1} \\
    &= \frac{d|\|H\|_{2,1} -\|\hat{H}_n\|_{2,1}|}{\|H\|_{2,1}} + \frac{d\|\hat{H}_n - H\|_{2,1}}{\|H\|_{2,1}} \leq \frac{2d\|\hat{H}_n - H\|_{2,1}}{\|H\|_{2,1}}.
\end{align*}
The result then follows from Theorem \ref{t:Hn_approx_rate} and equivalence of the column sum norm and operator norm on the matrix space $\RR^{d \times d}$.
\end{proof}
\section{Proof of Theorem \ref{thm:conv_rate}} \label{app:proof_conv_rate_f}

To directly apply results of \cite{OReilly_ObliqueMondrian}, we recall an equivalent assumption to~\eqref{e:multi-index_model} is
\begin{align}
f(x) = \tilde{g}(P_S x),    
\end{align}\label{e:multi-index_model_PS}
where $P_S \in \R^{d \times d}$ is the orthogonal projection matrix onto the subspace $S$ and $\tilde{g} \colon S \to \R$ denotes a function (different than g) that will satisfy assumption \ref{assump:f} for some constant $\tilde{L} >0$.

\begin{proof}
By Lemma \ref{l:fn_is_STIT} and Theorem 8 in \cite{OReilly_ObliqueMondrian}, conditioned on  $A_n$, we have the following upper bound:
\begin{align*}
&\EE[(\hat{f}_{n}(X) - f(X))^2] \\
&\leq %
\frac{9\tilde{L}^2d^4}{\lambda^2 \sigma_s(P_SA_n)^2}  + \frac{5\|f\|^2_{\infty} + 2\sigma^2}{n}\left(2s\sum_{k=s}^d\lambda^k \kappa_k d^{\max\{1, k-s\}}\|P_{S^{\perp}} A_n\|_{2,1}^{\max\{1,k-s\}}  +  \sum_{k=0}^s \frac{\lambda^k \kappa_k}{k!}\right),
\end{align*}
where $\kappa_k$ is the volume of the $k$-dimensional unit ball, and the expectation is taken with respect to $X$, $\mathcal{D}'_n$ and $\mathcal{P}^{'}$.
By Corollary \ref{cor:An_error} and Markov's inequality, for $\delta \in (0,1)$, \edit{we have for $\beta \leq \frac{1}{2}$,  
\begin{align*}
    \PP(\|A_n - A\|_{2,1} > n^{-\frac{\beta(1 - \delta)}{d + 2 + 2\beta}}) \leq n^{\frac{\beta(1 - \delta)}{d + 2 + 2\beta}}\EE[\|A_n - A\|_{2,1}] \lesssim n^{-\frac{\beta\delta}{d + 2 + 2\beta}}
\end{align*}
Then, for all $n$ large enough, there exist constants $C, c > 0$ such that with probability at least $1 - Cn^{-\frac{\beta\delta}{d + 2 + 2\beta}}$ with respect to $\mathcal{D}_n$, $\mathcal{P}$, and $\mathcal{X}_n$, we have that $\sigma_s(P_SA_n) \geq c > 0$, and
\[\|P_S^{\perp}A_n\|_{2,1} = \|P_S^{\perp}(A_n - A)\|_{2,1} \leq \|A_n - A\|_{2,1} \leq n^{-\frac{\beta(1 - \delta)}{d + 2 + 2\beta}}. \] 
Corollary 9 in \cite{OReilly_ObliqueMondrian} then implies that with probability at least $1 - Cn^{-\frac{\beta\delta}{d + 2 + 2\beta}}$ with respect to $\mathcal{D}_n$, $\mathcal{P}$, and $\mathcal{X}_n$, for $\lambda_n \sim n^{\frac{1}{d+2} + \frac{\beta(1 - \delta)(d-s)}{(d + 2)(d + 2 + 2\beta)}}$ and $M_n \gtrsim \lambda_n$,
\begin{align*}
\EE[(\hat{f}_{n}(X) - f(X))^2] &\lesssim %
n^{-\frac{2}{d+2} - \frac{2\beta(1 - \delta)(d-s)}{(d + 2)(d + 2 + 2\beta)}}. %
\end{align*}
Similarly, for $\beta > \frac{1}{2}$,}
\begin{align*}
    \PP(\|A_n - A\|_{2,1} > n^{-\frac{3(1-\delta)}{4d + 12}}) \leq n^{\frac{3(1 - \delta)}{4d + 12}}\EE[\|A_n - A\|_{2,1}] \lesssim n^{-\frac{3\delta}{4d + 12}},
\end{align*}
and for all $n$ large enough, there exist constants $C, c > 0$ such that with probability at least $1 - Cn^{-\frac{3\delta}{4d + 12}}$ with respect to $\mathcal{D}_n$, $\mathcal{P}$, and $\mathcal{X}_n$, we have that $\sigma_s(P_SA_n) \geq c > 0$, and
\[\|P_S^{\perp}A_n\|_{2,1} = \|P_S^{\perp}(A_n - A)\|_{2,1} \leq \|A_n - A\|_{2,1} \leq n^{-\frac{3(1-\delta)}{4d + 12}}. \] 
Corollary 9 in \cite{OReilly_ObliqueMondrian} then implies that with probability at least $1 - Cn^{-\frac{3\delta}{4d + 12}}$ with respect to $\mathcal{D}_n$, $\mathcal{P}$, and $\mathcal{X}_n$, for $\lambda_n \sim n^{\frac{1}{d+2} + \frac{3(1 - \delta)(d-s)}{4(d+3)(d+2)}}$ and $M_n \gtrsim \lambda_n$,
\begin{align*}
\EE[(\hat{f}_{n}(X) - f(X))^2] &\lesssim %
n^{-\frac{2}{d+2} - \frac{3(1 - \delta)(d-s)}{2(d+3)(d+2)}}. %
\end{align*}

\end{proof}

\section{Proofs of Theorem \ref{thm:omega_bnds} and Theorem \ref{t:conv_rate_weighted_mondrian}}\label{app:proof_conv_rate_f_weighted}

\begin{proof}[Proof of Theorem \ref{thm:omega_bnds}]
Similarly to the argument of Corollary \ref{cor:An_error}, we first see that
\begin{align*}
    \max_{i \in\{1, \ldots, d\}} \left|\frac{\omega_i}{\sum_{j=1}^d \omega_j} - \frac{\EE[|(\nabla f(X))_i|^2]}{\EE[\|\nabla f(X)\|_2^2]}\right| &\leq \sum_{i=1}^d \left|\frac{\omega_i}{\sum_{j=1}^d \omega_j} - \frac{\EE[|(\nabla f(X))_i|^2]}{\EE[\|\nabla f(X)\|_2^2]}\right| \\
    &\leq \frac{2\sum_{i=1}^d \left|\omega_i- \EE[|(\nabla f(X))_i|^2]\right|}{\EE[\|\nabla f(X)\|_2^2]}.
\end{align*}
Now noting that $\sum_{i=1}^d \omega_i = \frac{1}{n} \sum_{j=1}^n \|\hat{\nabla}\hat{f}_{n}(x_j)\|^2$, we have
\begin{align*}
    &\sum_{i=1}^d \left|\omega_i- \EE[|(\nabla f(X))_i|^2]\right| = \sum_{i=1}^d \left| \frac{1}{n} \sum_{j=1}^n |(\hat{\nabla}\hat{f}_{n}(x_j))_i|^2 - \EE[|(\nabla f(X))_i|^2]\right| \\
    &\leq \frac{1}{n} \sum_{j=1}^n\sum_{i = 1}^d \left|\left(|(\hat{\nabla}\hat{f}_{n}(x_j))_i - (\nabla f(x_j))_i| + |(\nabla f(x_j))_i|\right)^2 - \EE[|(\nabla f(X))_i|^2]\right| \\
    &= \frac{1}{n} \sum_{j=1}^n\sum_{i = 1}^d \bigg|2|(\hat{\nabla}\hat{f}_{n}(x_j))_i - (\nabla f(x_j))_i|^2 + 2|(\hat{\nabla}\hat{f}_{n}(x_j))_i - (\nabla f(x_j))_i||(\nabla f(x_j))_i| \\
    & \qquad \qquad + |(\nabla f(x_j))_i|^2 - \EE[|(\nabla f(X))_i|^2]\bigg| \\
    &\leq \frac{1}{n} \sum_{j=1}^n\|\hat{\nabla}\hat{f}_{n}(x_j) - \nabla f(x_j)\|_2^2 + \frac{2}{n} \sum_{j=1}^n\|\hat{\nabla}\hat{f}_{n}(x_j) - \nabla f(x_j)\|_2\|\nabla f(x_j)\|_2 \\
    & \qquad \qquad + \frac{1}{n} \sum_{j=1}^n\sum_{i = 1}^d\bigg||(\nabla f(x_j))_i|^2 - \EE[|(\nabla f(X))_i|^2]\bigg|. %
\end{align*}
Taking the expectation with respect to $\mathcal{X}_n$, $\mathcal{P}$, and $\mathcal{D}_n$ and applying Proposition \ref{l:gradvec_error} gives \edit{for $\beta \leq \frac{1}{2}$, letting 
$\lambda \sim n^{\frac{1}{d + 2+2\beta}}$, $M \gtrsim n^{\frac{1}{d + 2 + 2\beta}}$ and $t \sim n^{-\frac{1}{d + 2+2\beta}}$ 
gives %
\begin{align*}
    \EE\left[ \max_{i \in\{1, \ldots, d\}} \left|\frac{\omega_i}{\sum_{j=1}^d \omega_j} - \frac{\EE[|(\nabla f(X))_i|^2]}{\EE[\|\nabla f(X)\|_2^2]}\right|\right] \lesssim
    n^{-\frac{\beta}{d+2 + 2\beta}}.
\end{align*}
For $\beta > \frac{1}{2}$, letting %
$\lambda \sim n^{\frac{1}{d + 3}}$, $M \gtrsim n^{\frac{1}{d + 3}}$ and $t \sim n^{-\frac{3}{4d + 12}}$ 
gives}
\begin{align*}
     \EE\left[ \max_{i \in\{1, \ldots, d\}} \left|\frac{\omega_i}{\sum_{j=1}^d \omega_j} - \frac{\EE[|(\nabla f(X))_i|^2]}{\EE[\|\nabla f(X)\|_2^2]}\right|\right] \lesssim n^{-\frac{3}{4d+12}}. 
\end{align*}
    
\end{proof}

\begin{proof}[Proof of Theorem \ref{t:conv_rate_weighted_mondrian}]
\edit{Let $\beta \leq 1/2$. By Theorem \ref{thm:omega_bnds} and Markov's inequality, for $\delta \in (0,1)$,  
\begin{align*}
&\PP\left(\max_{i \in\{1, \ldots, d\}} \left|\frac{\omega_i}{\sum_{j=1}^d \omega_j} - \frac{\EE[|(\nabla f(X))_i|^2]}{\EE[\|\nabla f(X)\|_2^2]}\right| > n^{-\frac{\beta(1 - \delta)}{d + 2 + 2\beta}})\right) \\
    &\qquad \leq n^{\frac{\beta(1 - \delta)}{d + 2 + 2\beta}}\EE\left[\max_{i \in\{1, \ldots, d\}} \left|\frac{\omega_i}{\sum_{j=1}^d \omega_j} - \frac{\EE[|(\nabla f(X))_i|^2]}{\EE[\|\nabla f(X)\|_2^2]}\right|\right] \lesssim n^{-\frac{\beta\delta}{d + 2 + 2\beta}}.
\end{align*}
Then, for all $n$ large enough, there exist constants $C, c > 0$ such that with probability at least $1 - Cn^{-\frac{\beta\delta}{d + 2 + 2\beta}}$ with respect to $\mathcal{D}_n$, $\mathcal{P}$, and $\mathcal{X}_n$, we have that $\min_{i \in S} \frac{\omega_i}{\sum_{j=1}^d \omega_j} \geq c > 0$, and
\[\max_{i \notin S} \frac{\omega_i}{\sum_{j=1}^d \omega_j} \leq n^{-\frac{\beta(1 - \delta)}{d + 2 + 2\beta}}. \] 
Corollary 14 in \cite{OReilly_ObliqueMondrian} then implies that with probability at least $1 - Cn^{-\frac{\beta\delta}{d + 2 + 2\beta}}$ with respect to $\mathcal{D}_n$, $\mathcal{P}$, and $\mathcal{X}_n$, for $\lambda_n \sim n^{\frac{1}{d+2 + 2\beta} + \frac{\beta(1 - \delta)(d-s)}{(d + 2 + 2\beta)^2}}$ and $M_n \gtrsim \lambda_n^{2\beta}$,
\begin{align*}
\EE[(\hat{f}_{n}(X) - f(X))^2] &\lesssim %
n^{-\frac{2 + 2\beta}{d+2 + 2\beta} - \frac{\beta(1 - \delta)(d-s)(2 + 2\beta)}{(d + 2 + 2\beta)^2}}. 
\end{align*}}

\edit{Now let $\beta > 1/2$.} By Theorem \ref{thm:omega_bnds} and Markov's inequality, for $\delta \in (0,1)$,
\begin{align*}
    &\PP\left(\max_{i \in\{1, \ldots, d\}} \left|\frac{\omega_i}{\sum_{j=1}^d \omega_j} - \frac{\EE[|(\nabla f(X))_i|^2]}{\EE[\|\nabla f(X)\|_2^2]}\right| > n^{-\frac{3(1-\delta)}{4d + 12}}\right) \\
    &\qquad \leq n^{\frac{3(1 - \delta)}{4d + 12}}\EE\left[\max_{i \in\{1, \ldots, d\}} \left|\frac{\omega_i}{\sum_{j=1}^d \omega_j} - \frac{\EE[|(\nabla f(X))_i|^2]}{\EE[\|\nabla f(X)\|_2^2]}\right|\right] \lesssim n^{-\frac{3\delta}{4d + 12}}.
\end{align*}
Then, for all $n$ large enough, there exists constants $C, c > 0$ such that with probability at least $1 - Cn^{-\frac{3\delta}{4d + 12}}$ with respect to $\mathcal{D}_n$, $\mathcal{P}$, and $\mathcal{X}_n$, we have that $\min_{i \in S} \frac{\omega_i}{\sum_{j=1}^d \omega_j} \geq c > 0$, and
\[\max_{i \notin S} \frac{\omega_i}{\sum_{j=1}^d \omega_j} \leq n^{-\frac{3(1-\delta)}{4d + 12}}.\]
Finally, again by Corollary 14 in \cite{OReilly_ObliqueMondrian}, letting $M_n \gtrsim \lambda_n$ and $\lambda_n \sim n^{\frac{1}{d+3}+ \frac{3(1 - \delta)(d-s)}{4(d+3)^2}}$ gives that with probability at least $1 - Cn^{-\frac{3\delta}{4d + 12}}$ with respect to $\mathcal{D}_n$, $\mathcal{P}$, and $\mathcal{X}_n$,
\begin{align*}
\EE[(\hat{f}_{n}(X) - f(X))^2] \lesssim n^{-\frac{3}{d+3} - \frac{9(1 - \delta)(d-s)}{4(d+3)^2}}.  
\end{align*}

\end{proof}

\section{Maximum Principal Angle}
\label{sec:MPA}

In this section, we describe the distance between two subspaces that is computed as part of our numerical experiments to assess the recovery of the relevant feature subspaces. For two subspaces \([U, W]\) of dimension $k$, we compute the QR decomposition of both. That is,
\begin{align*}
    U &= Q_u R_u \\
    W &= Q_w R_w,
\end{align*}
where \( Q_u \) and \( Q_w \in \mathbb{R}^{n \times k} \) are orthonormal bases such that \( Q_u^T Q_u = Q_w^T Q_w = I_k \) that span the same subspace as the original columns of \( U \) and \( W \), and \( R_u \) and \( R_w \in \mathbb{R}^{k \times k} \) are lower triangular matrices. Next, we compute the following matrix that contains the inner products between the two collections of basis vectors
\[ D = \langle Q_u, Q_w \rangle = Q_u^T Q_w \in \mathbb{R}^{k \times k}, \]
and then apply the singular value decomposition 
\[ D = U\Sigma V^T. \]

By interpreting \( D \) as the cross-covariance matrix, its singular vectors represent the main orthogonal axes of cross-covariation between the two subspaces, while the singular values represent angles. In order to compute the principal angles of the subspaces, we define
\[ \theta \coloneqq \cos^{-1}(\Sigma) = \cos^{-1} \left( \begin{bmatrix} \sigma_1 & \sigma_2 & \dots & \sigma_k \end{bmatrix} \right) \]
The metric we use to compare subspaces is the maximum principal angle, which is simply the maximum value of \( \theta \). For more details on this distance, we refer the reader to \cite{ye2016schubert}.

\section{Additional Details on the Numerical Experiments}

\subsection{Simulation Study}
\label{sec:reiterate}

Figure \ref{fig:dist_to_true_H reiterate} shows the maximum principal angles between the subspaces spanned by the true expected outer product matrix $H$ and the estimated expected outer product matrix $\hat{H}_{n,t}$ after $K=1$ iteration of the proposed \TRIM method. Figure \ref{fig:test MSE comparison reiterate} compares the test mean squared error (MSE) of the proposed method and the oracle method that has knowledge of the true $H$ after one round of iteration for the four scenarios described in Section~\ref{sec:simulation}.

\begin{figure}[hbt!]
    \centering
    \begin{subfigure}[b]{0.48\textwidth}
        \includegraphics[width=\textwidth]{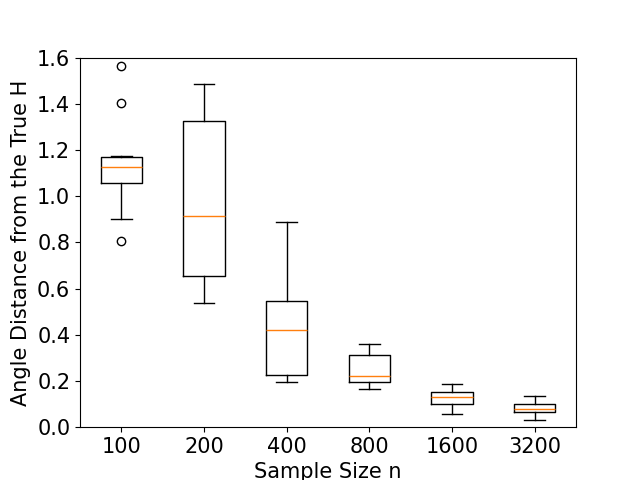}
        \caption{Scenario 1}
    \end{subfigure}
    \hfill %
    \begin{subfigure}[b]{0.48\textwidth}
        \includegraphics[width=\textwidth]{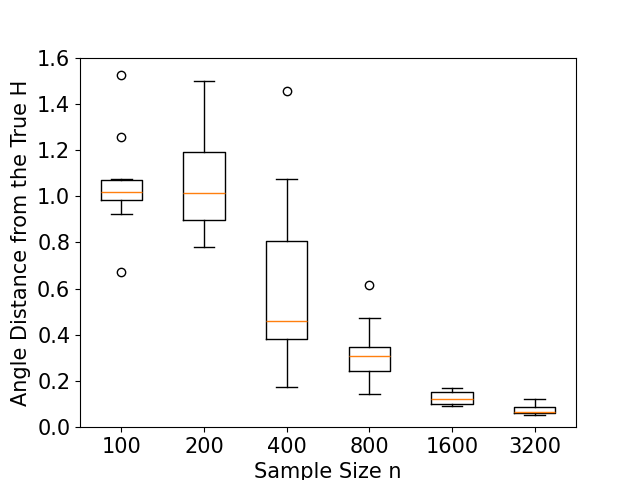}
        \caption{Scenario 2}
    \end{subfigure}
    \hfill %
    \begin{subfigure}[b]{0.48\textwidth}
        \includegraphics[width=\textwidth]{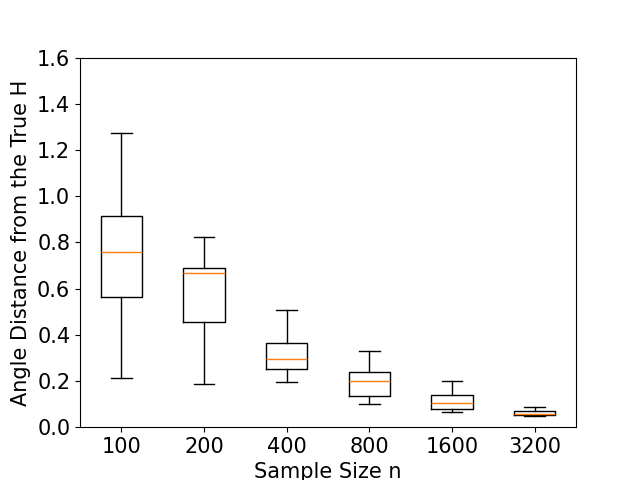}
        \caption{Scenario 3}
    \end{subfigure}
    \hfill %
    \begin{subfigure}[b]{0.48\textwidth}
        \includegraphics[width=\textwidth]{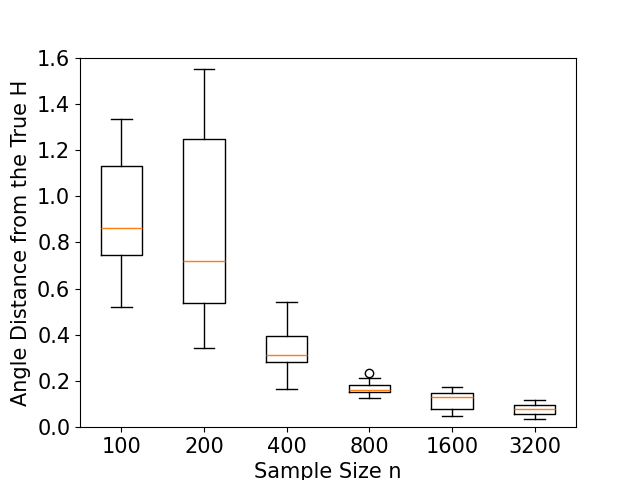}
        \caption{Scenario 4}
    \end{subfigure}
    \caption{Simulation study: Maximum principal angles between the subspaces spanned by the expected outer product matrix $H$ and estimated expected outer product matrix $\hat{H}_{n,t}$ after one round of iteration of the proposed method.}
    \label{fig:dist_to_true_H reiterate}
\end{figure}
\begin{figure}[hbt!]
    \centering
    \begin{subfigure}[b]{0.48\textwidth}
        \includegraphics[width=\textwidth]{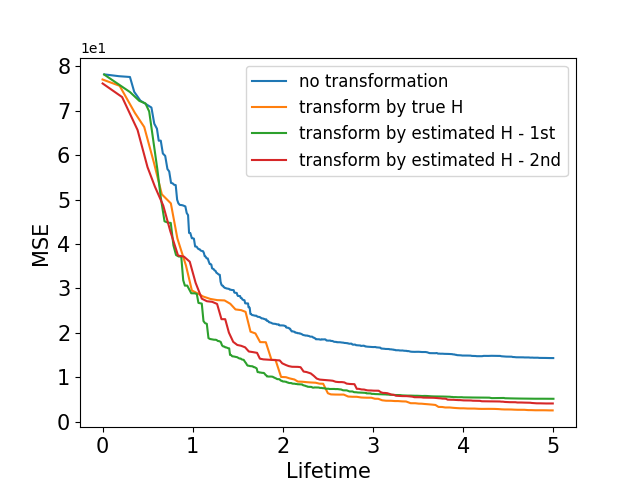}
        \caption{Scenario 1}
    \end{subfigure}
    \begin{subfigure}[b]{0.48\textwidth}
        \includegraphics[width=\textwidth]{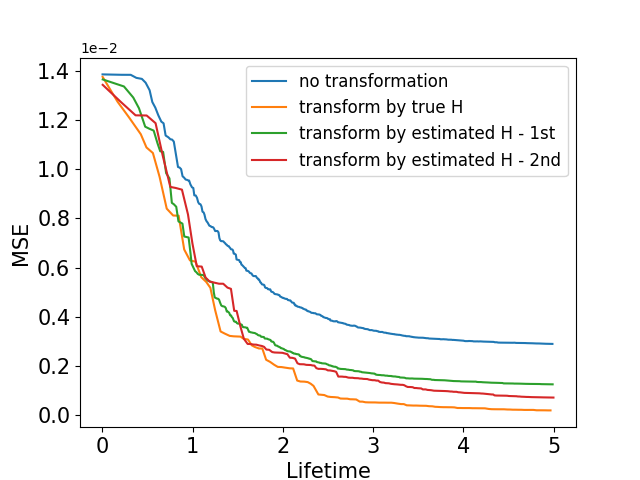}
        \caption{Scenario 2}
    \end{subfigure}
    \begin{subfigure}[b]{0.48\textwidth}
        \includegraphics[width=\textwidth]{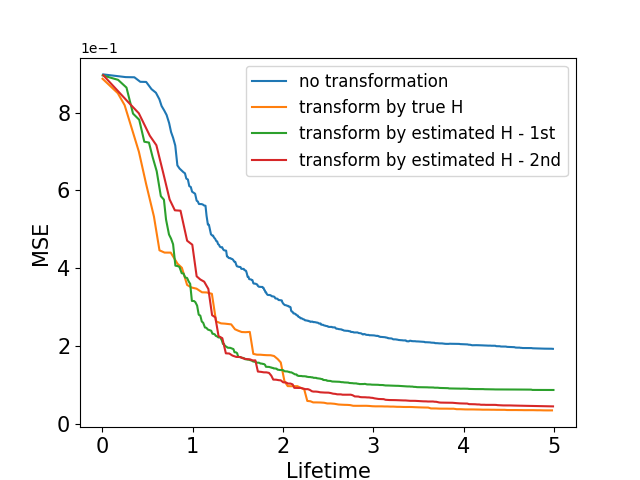}
        \caption{Scenario 3}
    \end{subfigure}
    \begin{subfigure}[b]{0.48\textwidth}
        \includegraphics[width=\textwidth]{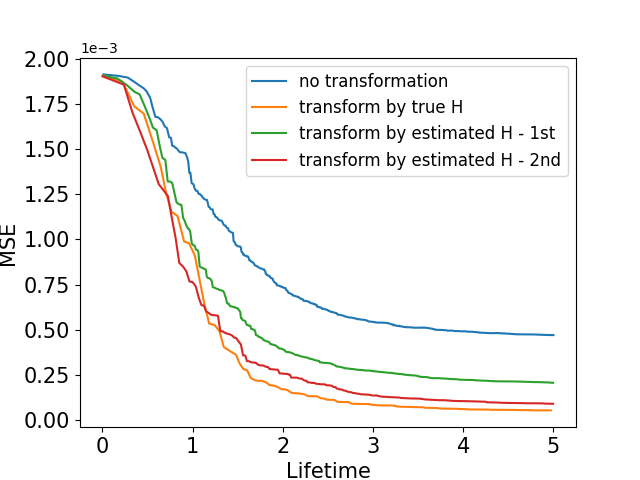}
        \caption{Scenario 4}
    \end{subfigure}
    \caption{Simulation study: Comparison of the test mean squared error (MSE) of the proposed method and the oracle method after one round of iteration of the proposed method.}
    \label{fig:test MSE comparison reiterate}
\end{figure}

Figures \ref{fig:ablation scenario 2}, \ref{fig:ablation scenario 3} and \ref{fig:ablation scenario 4} present ablation studies on the effect of two hyperparameter choices for Scenarios 2, 3 and 4, respectively. The conclusions from these studies are comparable to those discussed in Section~\ref{sec:simulation} for the first scenario of the synthetic data experiments.

\begin{figure}[hbt!]
    \centering
    \begin{subfigure}[b]{0.95\textwidth}
        \includegraphics[width=\textwidth]{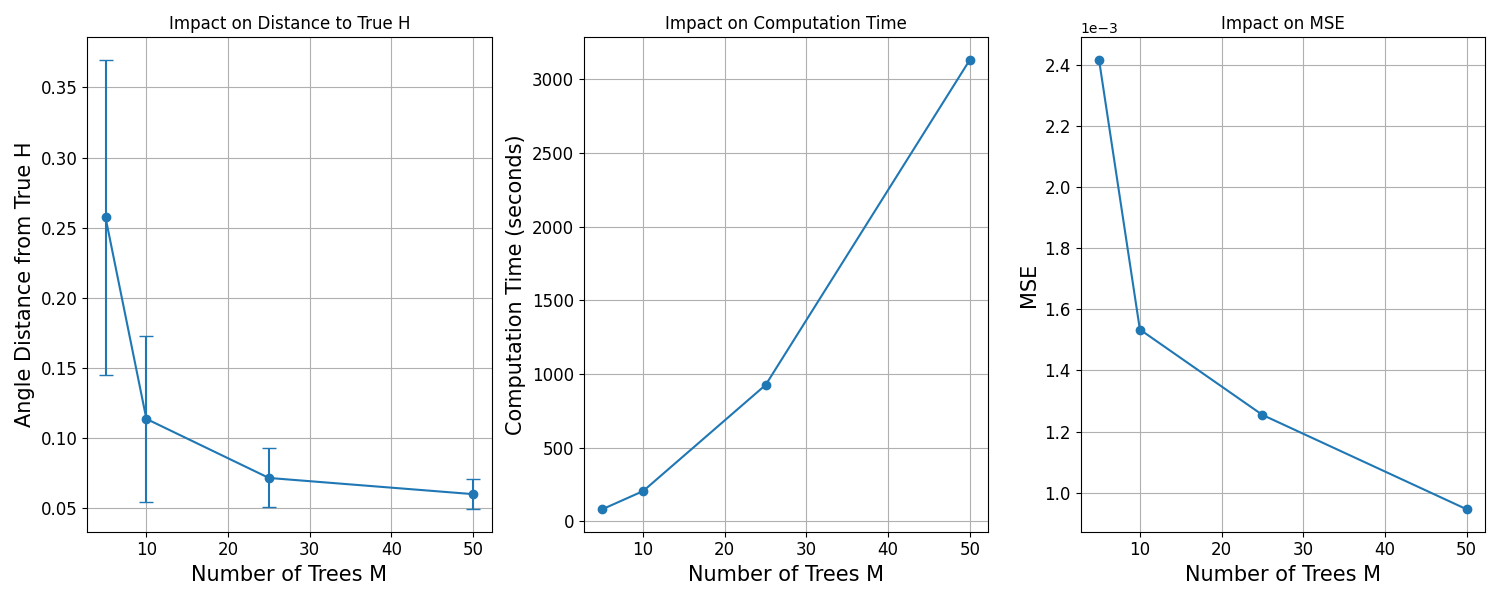}
        \caption{Ablation study on the number of trees $M$}
    \end{subfigure}
    \hfill %
    \begin{subfigure}[b]{0.95\textwidth}
        \includegraphics[width=\textwidth]{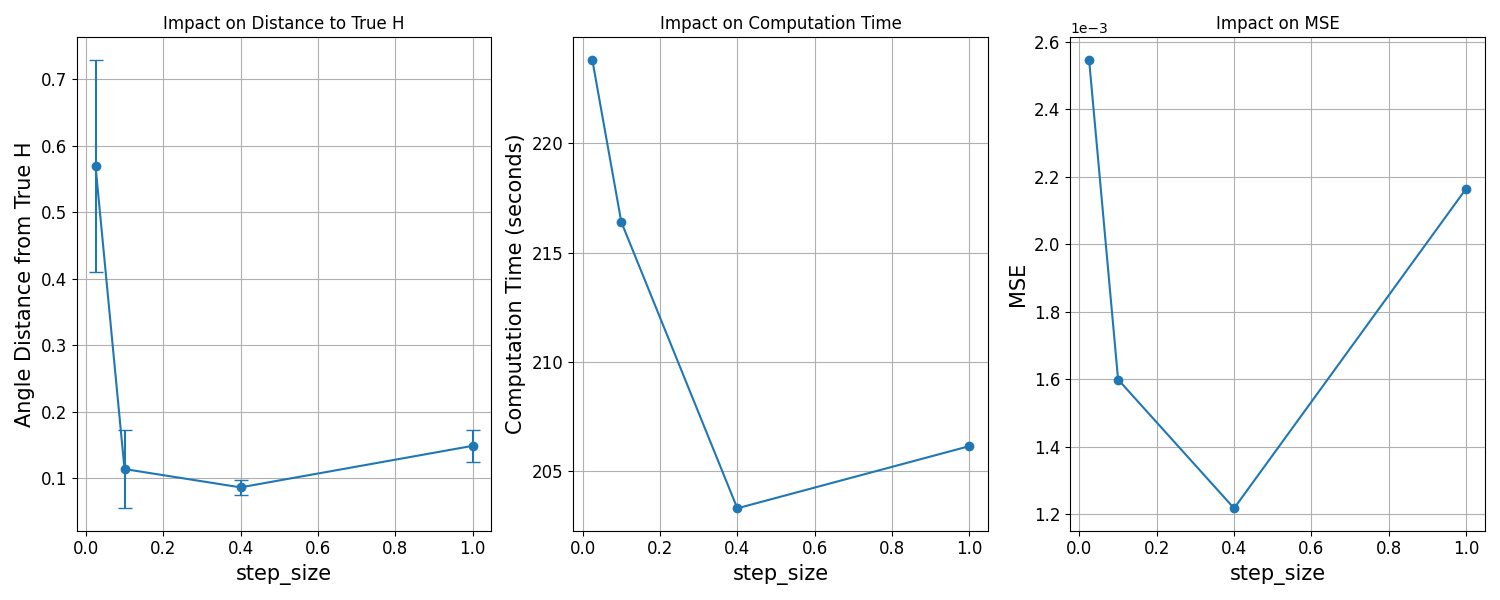}
        \caption{Ablation study on the step size $t$}
    \end{subfigure}
    \caption{Ablation studies to evaluate the effect of hyperparameter choices on the performance of the estimator for Scenario 2.}
    \label{fig:ablation scenario 2}
\end{figure}

\begin{figure}[hbt!]
    \centering
    \begin{subfigure}[b]{0.95\textwidth}
        \includegraphics[width=\textwidth]{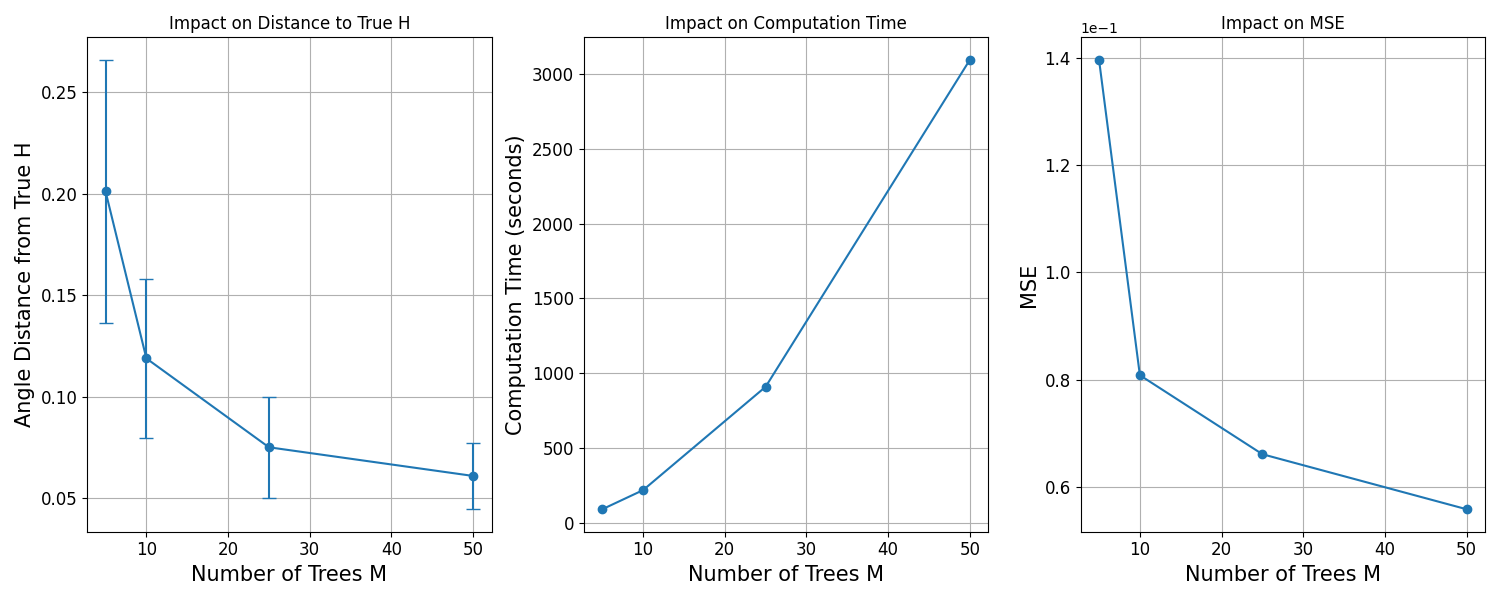}
        \caption{Ablation Study on the number of trees $M$}
    \end{subfigure}
    \hfill %
    \begin{subfigure}[b]{0.95\textwidth}
        \includegraphics[width=\textwidth]{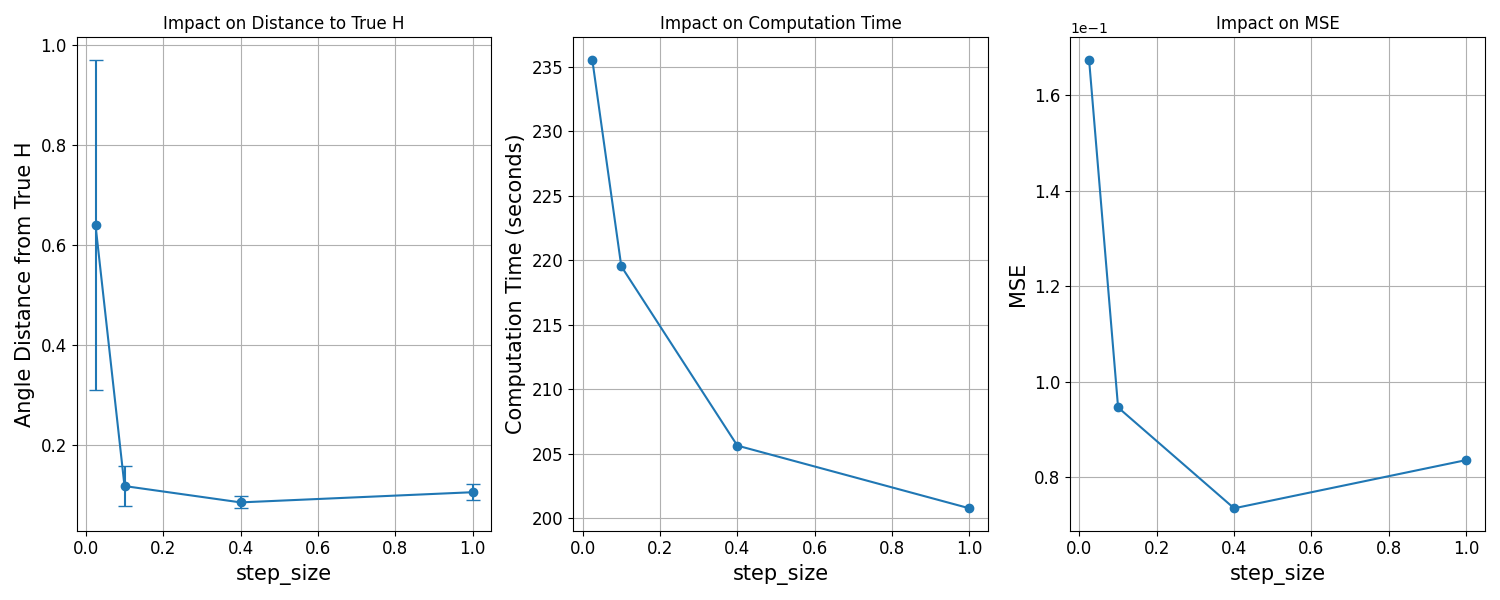}
        \caption{Ablation study on the step size $t$}
    \end{subfigure}
    \caption{Ablation studies to evaluate the effect of hyperparameter choices on the performance of the estimator for Scenario 3.}
    \label{fig:ablation scenario 3}
\end{figure}

\begin{figure}[hbt!]
    \centering
    \begin{subfigure}[b]{0.95\textwidth}
        \includegraphics[width=\textwidth]{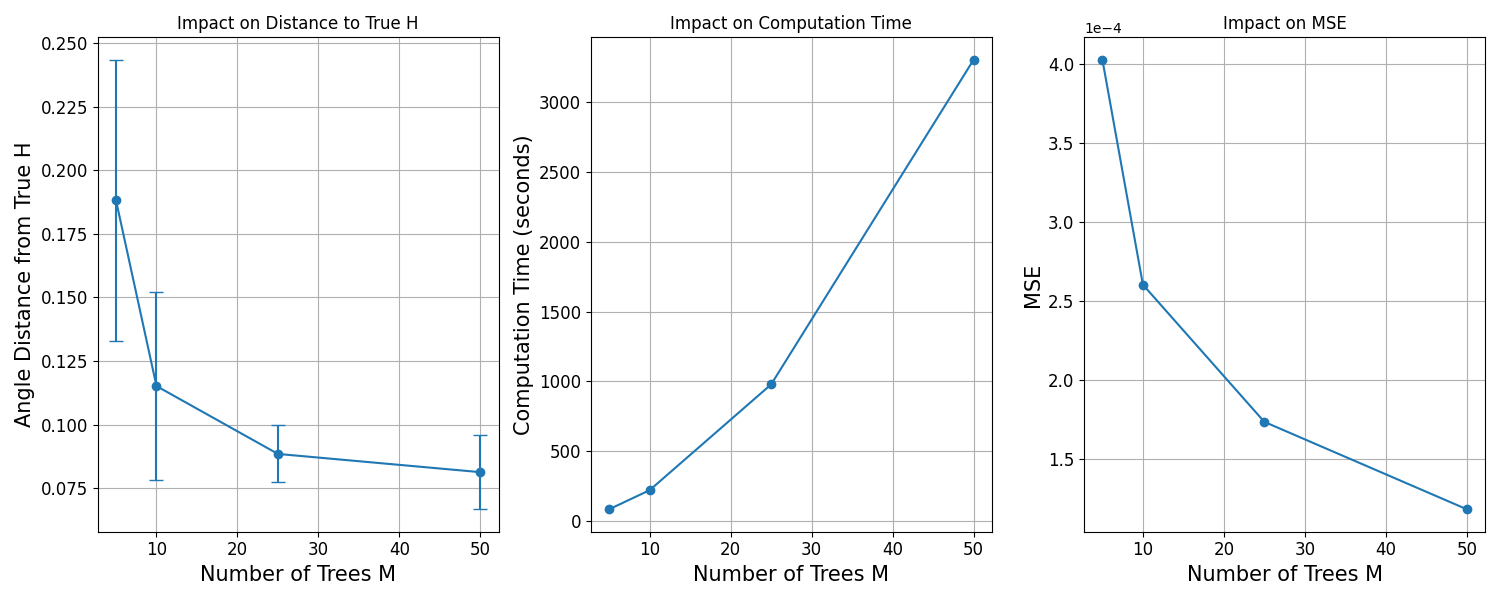}
        \caption{Ablation study on the number of trees $M$}
    \end{subfigure}
    \hfill %
    \begin{subfigure}[b]{0.95\textwidth}
        \includegraphics[width=\textwidth]{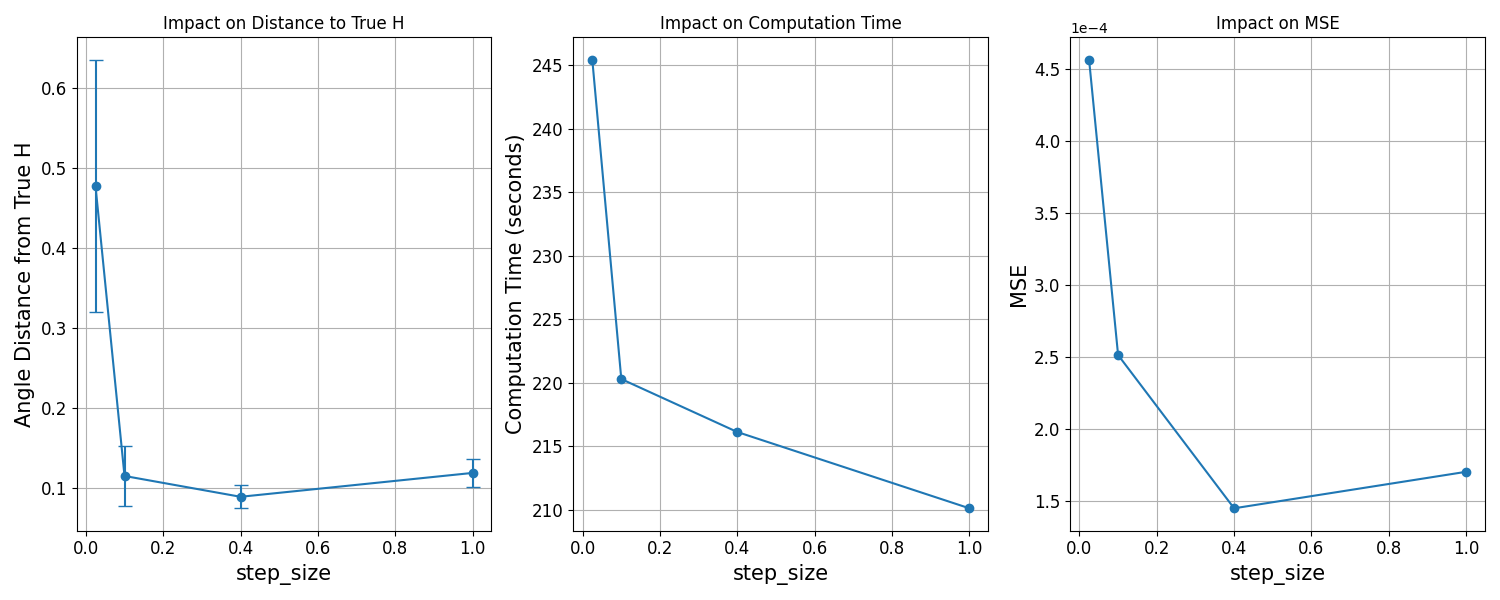}
        \caption{Ablation study on the step size $t$}
    \end{subfigure}
    \caption{Ablation studies to evaluate the effect of hyperparameter choices on the performance of the estimator for Scenario 4.}
    \label{fig:ablation scenario 4}
\end{figure}

\subsection{SEIR Model for the Spread of Ebola in Western Africa}
\label{sec:ebola_numerics}

The model we employ is a modified SEIR model for the spread of Ebola in Western Africa. The model incorporates specific assumptions to focus on key dynamics relevant to the spread of disease, following \cite{diaz2018modified, constantine_howard_2016_asdatasets}. In particular, the model makes the following three assumptions. First, stochastic effects, births and non-ebola deaths are negligible. Second, individuals who are removed (or deceased) can fall into one of three states: those who are infectious but have been improperly buried, posing a risk of further transmission; those who are non-infectious due to proper burial, effectively halting transmission from these individuals; and those who have recovered from the disease and are assumed to have immunity, thus not becoming susceptible again. Third, hospitalized individuals can still spread the disease. However, deaths in hospitals are properly buried and hospitalization increases the chances of recovery. Under these assumptions, the ordinary differential equations governing our model are
\begin{align*}
\frac{dS}{dt} &= -\beta_1 SI - \beta_2 SR_I - \beta_3 SH, \\
\frac{dE}{dt} &= \beta_1 SI + \beta_2 SR_I + \beta_3 SH - \delta E, \\
\frac{dI}{dt} &= \delta E - \gamma_1 I - \psi I, \\
\frac{dH}{dt} &= \psi I - \gamma_2 H, \\
\frac{dR_I}{dt} &= \rho_1 \gamma_1 I - \omega R_I, \\
\frac{dR_B}{dt} &= \omega R_I + \rho_2 \gamma_2 H, \\
\frac{dR_R}{dt} &= (1-\rho_1) \gamma_1 I + (1-\rho_2) \gamma_2 H,
\end{align*}
where \(S\) is the susceptible fraction of the population, \(E\) is the exposed population (infected but asymptomatic), \(I\) is the infected fraction, \(H\) is the hospitalized fraction, \(R_I\) is the infectious dead (not properly buried), \(R_B\) is the non-infectious dead (properly buried), and \(R_R\) is the recovered fraction. The basic reproduction number (a metric measuring how many new cases of disease each current case causes) is given by
\[
R_0(p) = \frac{\beta_1 + \frac{\beta_2 \rho_1 \gamma_1}{\omega} + \frac{\beta_3}{\gamma_2} \psi}{\gamma_1 + \psi}.
\]

We use uniform distributions, \(\mu = U(\text{min}, \text{max})\), for the input parameters to the model whose parameters are given in the following table.
\begin{center}
\begin{tabular}{|c|c|c|}
\hline
Parameter & Liberia & Sierra Leone \\
\hline
\(\beta_1\) & \(U(.1, .4)\) & \(U(.1, .4)\) \\
\(\beta_2\) & \(U(.1, .4)\) & \(U(.1, .4)\) \\
\(\beta_3\) & \(U(.05, .2)\) & \(U(.05, .2)\) \\
\(\rho_1\) & \(U(.41, 1)\) & \(U(.41, 1)\) \\
\(\gamma_1\) & \(U(.0276, .1702)\) & \(U(.0275, .1569)\) \\
\(\gamma_2\) & \(U(.081, .21)\) & \(U(.1236, .384)\) \\
\(\omega\) & \(U(.25, .5)\) & \(U(.25, .5)\) \\
\(\psi\) & \(U(.0833, .7)\) & \(U(.0833, .7)\) \\
\hline
\end{tabular}
\end{center}

The expected outer product matrix $H$ for this problem has the form 
$$H = \int \nabla R_0(x) \nabla R_0(x)^T \mu(x) dx,$$
where \(\nabla R_0\) is the gradient of \(R_0\) with respect to the normalized parameters, and \(\mu\) is the probability density function on the parameters. We approximate $H$ using Gauss-Legendre quadrature with 8 points in each of the 8 dimensions of the parameter space. Gradients with respect to normalized parameters are computed according to the chain rule: if $p$ is an un-normalized parameter and $x$ is its normalized version, then $\frac{\partial R_0}{\partial x} = \frac{\partial R_0}{\partial p}\frac{\partial p}{\partial x}$, and $p = l + \frac{u-l}{2}(x+1)\Rightarrow\frac{\partial p}{\partial x} = \frac{u - l}{2}$, where $u$ and $l$ are the upper and lower bounds on the parameter. The resulting expected outer product matrix corresponds to the one identified using the active subspace procedures described in~\cite{constantine2014active}.

Figures~\ref{fig:ebola dist_to_true_H} and Figure~\ref{fig:ebola test MSE comparison} show that our proposed method performs well with this model. The maximum principal angles between the subspaces spanned by the true expected outer product matrix $H$ and estimated expected outer product matrix $\hat{H}_{n,t}$ decrease as the training sample size $n$ increases. The test MSE of the proposed method is consistently lower than the baseline method. Moreover, the test MSE  after a second iteration of the proposed \TRIM method is closer to the test MSE of the oracle method than the test MSE of the proposed method with a single iteration, when the lifetime is long enough. Interestingly, when the lifetime is short, the test MSE with a single iteration of the proposed method is lower than the test MSE with two iterations of \TRIM as well as the oracle method.

\begin{figure}[hbt!]
    \centering
    \begin{subfigure}[b]{0.48\textwidth}
        \includegraphics[width=\textwidth]{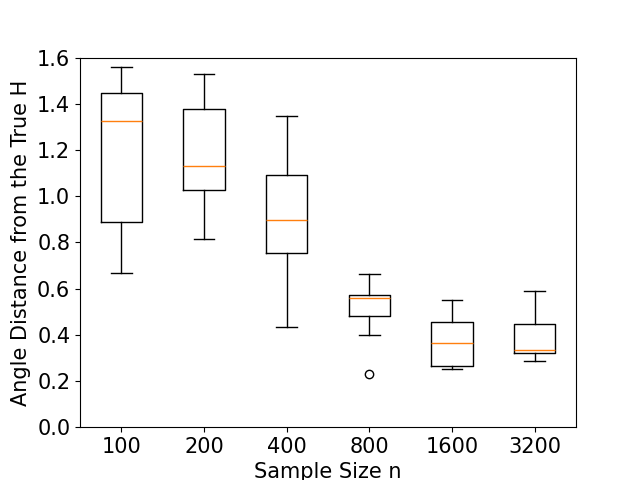}
        \caption{Liberian}
    \end{subfigure}
    \hfill %
    \begin{subfigure}[b]{0.48\textwidth}
        \includegraphics[width=\textwidth]{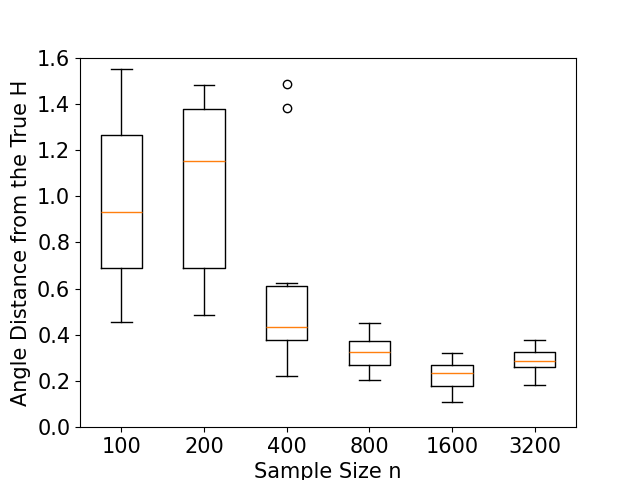}
        \caption{Liberian (reiterate)}
    \end{subfigure}
    \hfill %
    \begin{subfigure}[b]{0.48\textwidth}
        \includegraphics[width=\textwidth]{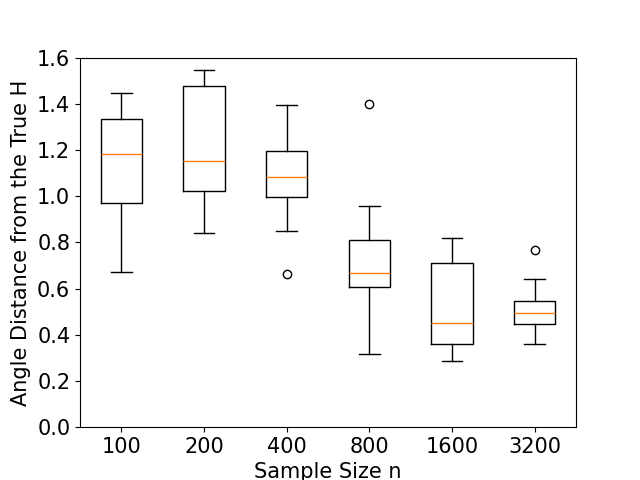}
        \caption{Sierra Leone}
    \end{subfigure}
    \hfill %
    \begin{subfigure}[b]{0.48\textwidth}
        \includegraphics[width=\textwidth]{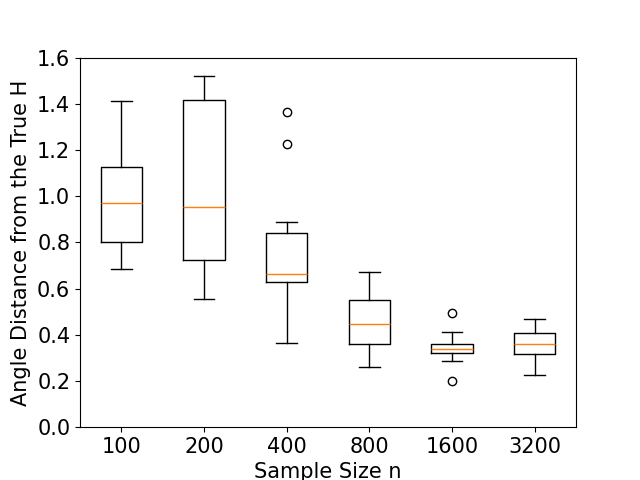}
        \caption{Sierra Leone (reiterate)}
    \end{subfigure}
    \caption{SEIR model for the spread of Ebola in Western Africa: Maximum principal angles between the subspaces spanned by the expected outer product matrix $H$ and estimated expected outer product matrix $\hat{H}_{n,t}$.}
    \label{fig:ebola dist_to_true_H}
\end{figure}

\begin{figure}[hbt!]
    \centering
    \begin{subfigure}[b]{0.48\textwidth}
        \includegraphics[width=\textwidth]{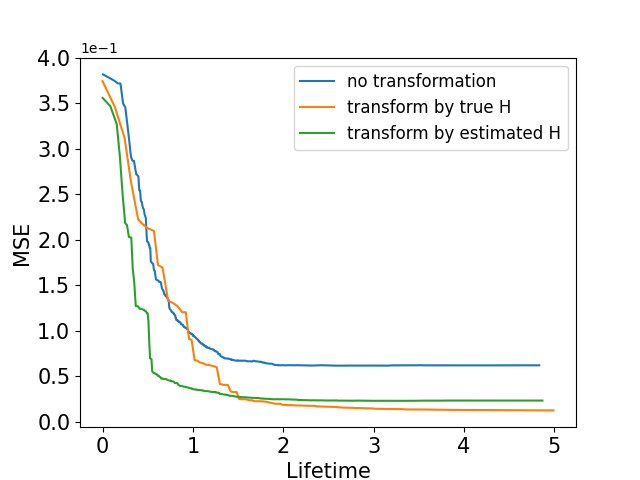}
        \caption{Liberian}
    \end{subfigure}
    \hfill %
    \begin{subfigure}[b]{0.48\textwidth}
        \includegraphics[width=\textwidth]{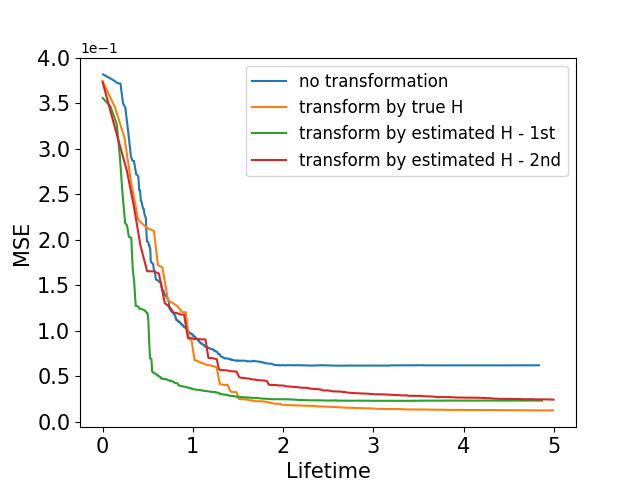}
        \caption{Liberian (reiterate)}
    \end{subfigure}
    \hfill %
    \begin{subfigure}[b]{0.48\textwidth}
        \includegraphics[width=\textwidth]{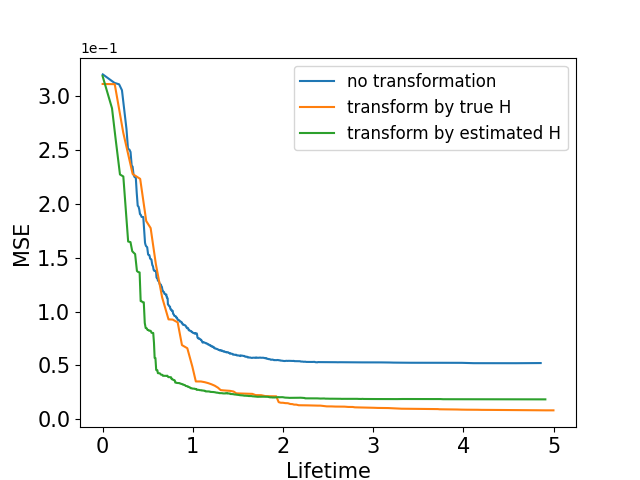}
        \caption{Sierra Leone}
    \end{subfigure}
    \hfill %
    \begin{subfigure}[b]{0.48\textwidth}
        \includegraphics[width=\textwidth]{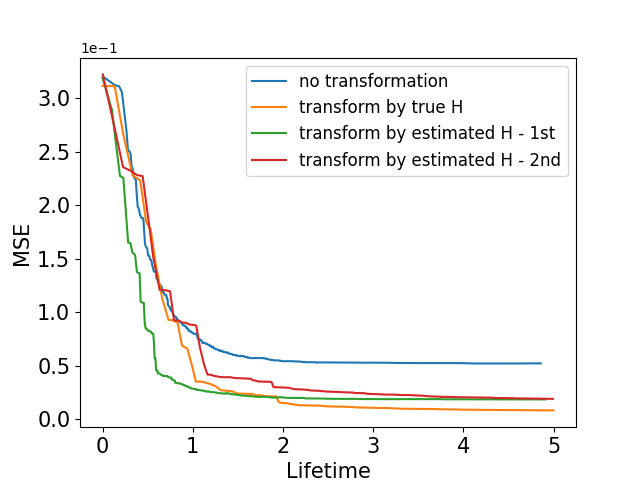}
        \caption{Sierra Leone (reiterate)}
    \end{subfigure}
    \caption{SEIR model for the spread of Ebola in Western Africa: Comparison of the test mean squared error (MSE) of the proposed method and the oracle method.}
    \label{fig:ebola test MSE comparison}
\end{figure}

\subsection{Real Data Experiments}
\label{sec:real_numerics}

Figure \ref{fig:real test MSE comparison 2} shows the comparison of the test mean squared error (MSE) of the proposed method and the oracle method for the real datasets. The proposed method performs well in this setting, as shown in Figure \ref{fig:real test MSE comparison} and Figure \ref{fig:real test MSE comparison 2}. The test MSE of \TRIM is consistently lower than the baseline Mondrian forest. Figure \ref{fig:real test accuracy comparison} shows the comparison of the test accuracy of different methods for the classification tasks.

\begin{figure}[hbt!]
    \begin{subfigure}[b]{0.48\textwidth}
        \includegraphics[width=\textwidth]{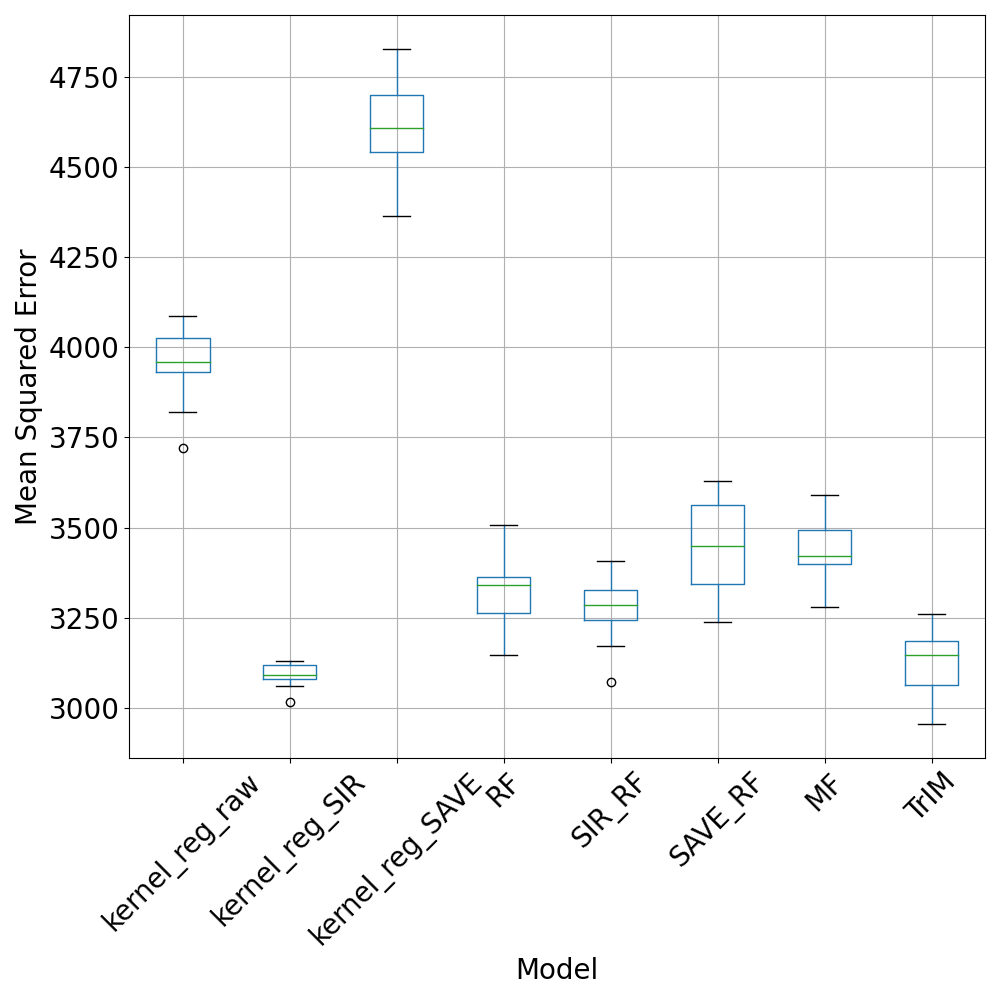}
        \caption{Diabetes}
    \end{subfigure}
    \hfill %
    \begin{subfigure}[b]{0.48\textwidth}
        \includegraphics[width=\textwidth]{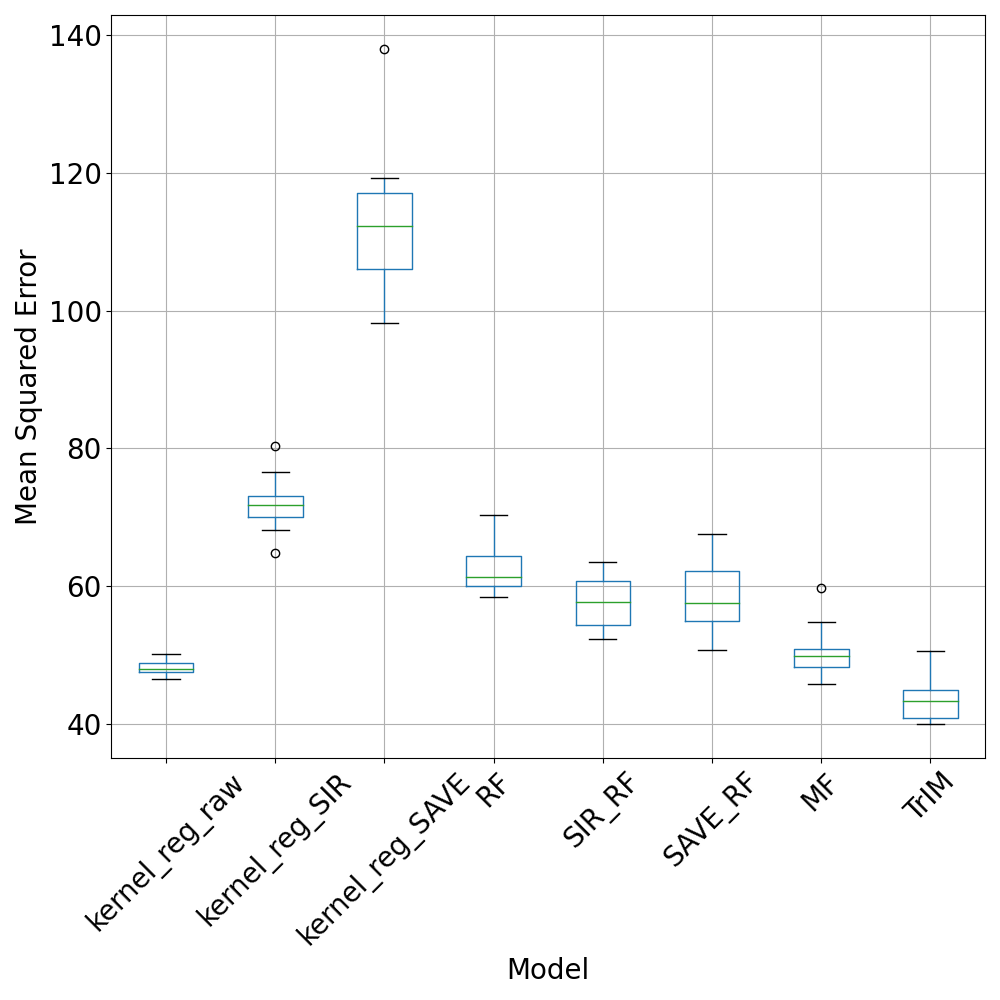}
        \caption{Mu284}
    \end{subfigure}
    \begin{subfigure}[b]{0.48\textwidth}
        \includegraphics[width=\textwidth]{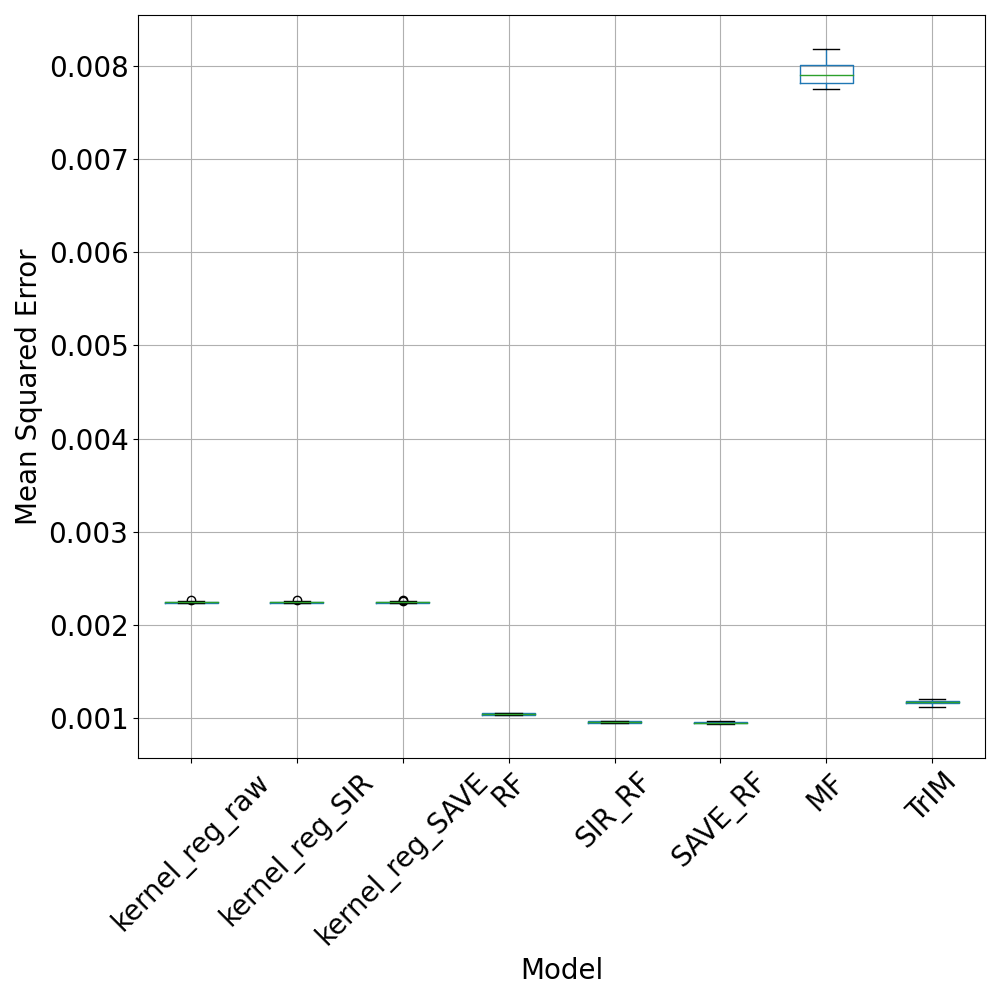}
        \caption{Bank8FM}
    \end{subfigure}
    \hfill %
    \begin{subfigure}[b]{0.48\textwidth}
        \includegraphics[width=\textwidth]{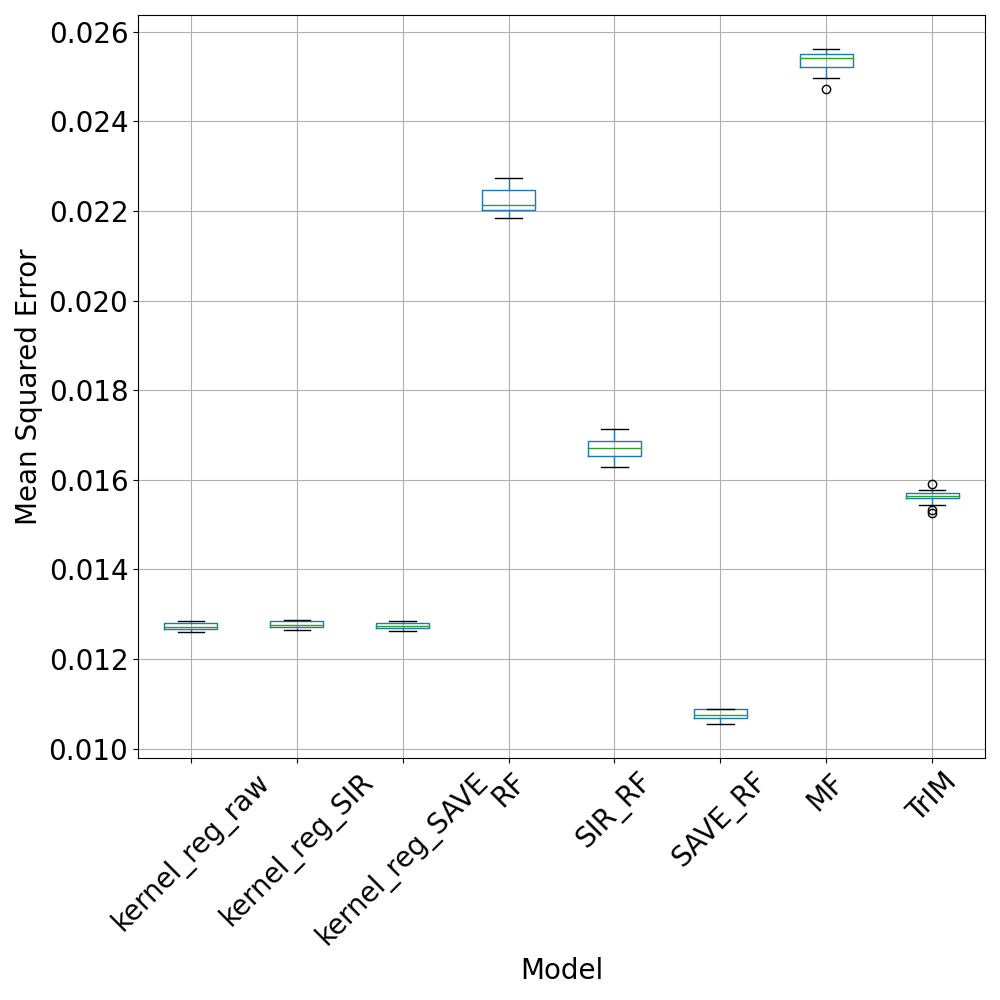}
        \caption{Kin8nm}
    \end{subfigure}
    \caption{Comparison of the test mean squared error (MSE) of different methods for real data applications, where the box plot displays the variation across 15 trials.}
    \label{fig:real test MSE comparison 2}
\end{figure}

\begin{figure}[hbt!]
    \begin{subfigure}[b]{0.48\textwidth}
        \includegraphics[width=\textwidth]{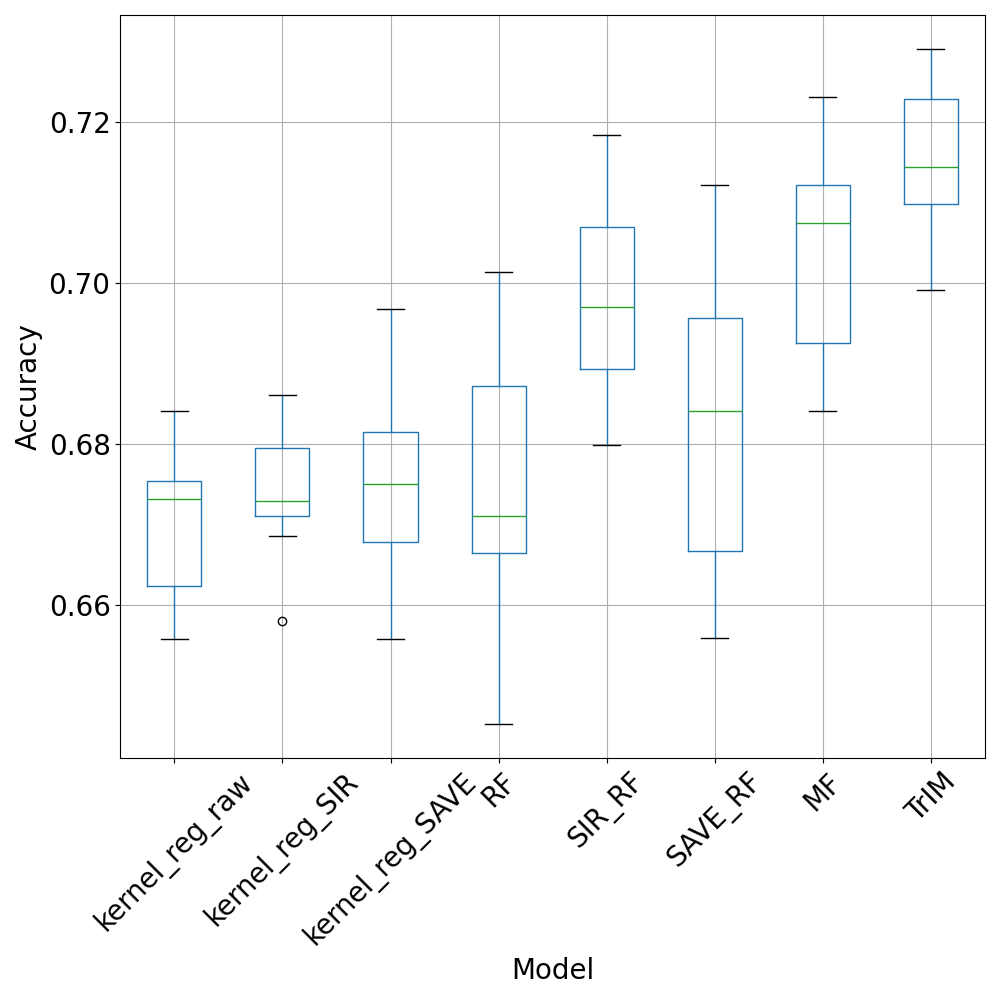}
        \caption{SA-heart}
    \end{subfigure}
    \hfill %
    \begin{subfigure}[b]{0.48\textwidth}
        \includegraphics[width=\textwidth]{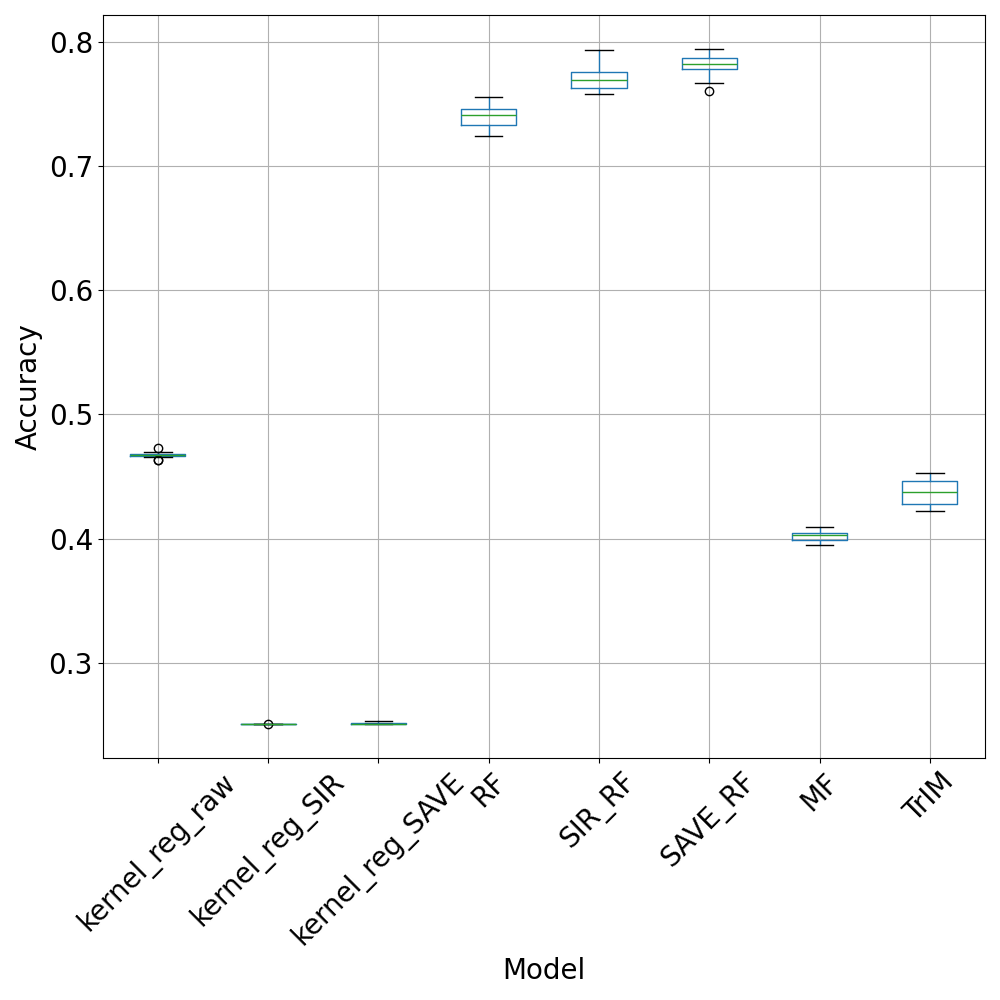}
        \caption{Vehicle}
    \end{subfigure}
    \begin{subfigure}[b]{0.48\textwidth}
        \includegraphics[width=\textwidth]{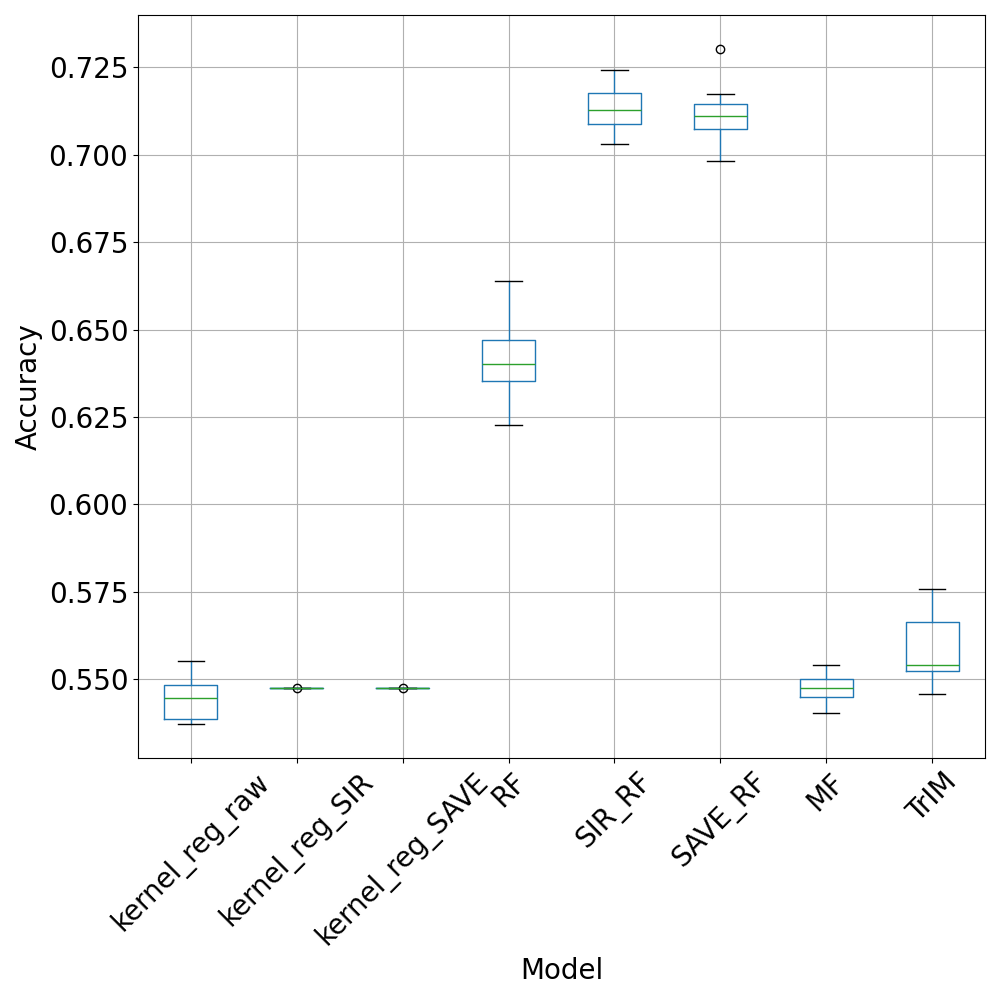}
        \caption{Forex}
    \end{subfigure}
    \hfill %
    \begin{subfigure}[b]{0.48\textwidth}
        \includegraphics[width=\textwidth]{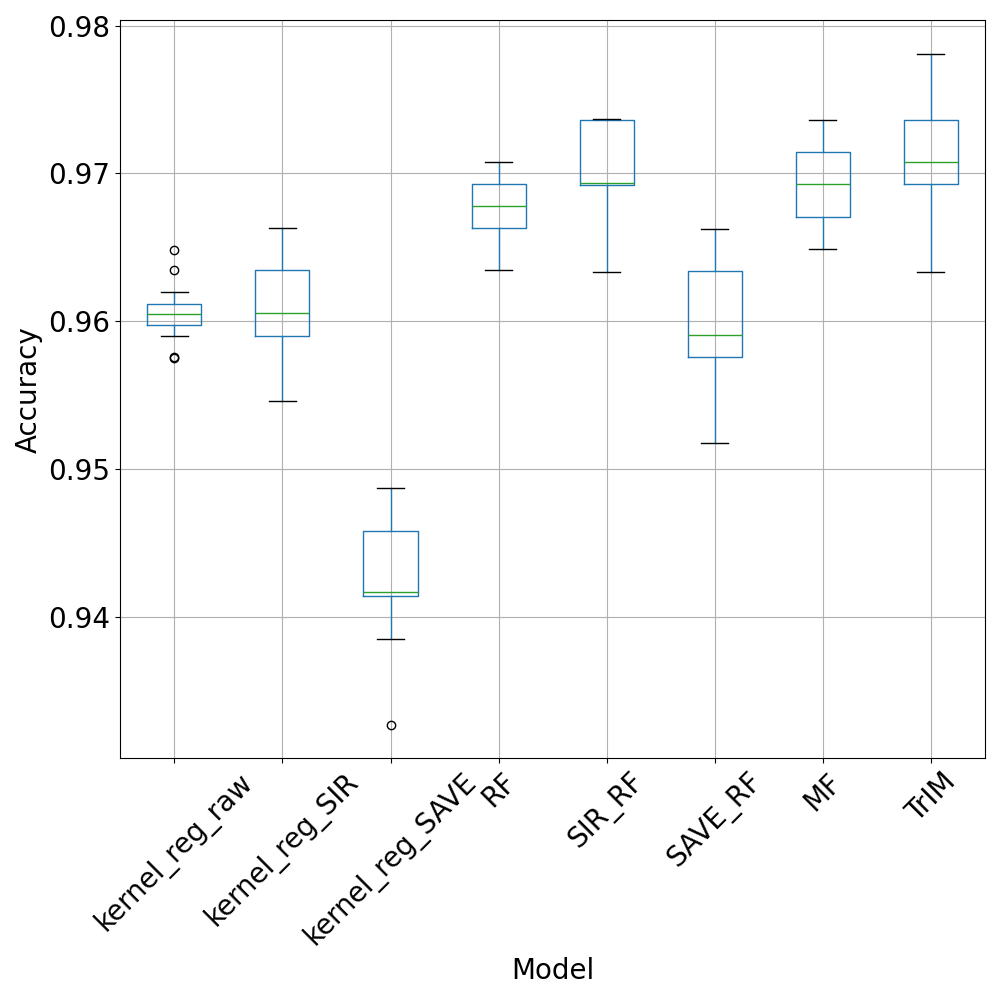}
        \caption{Breast}
    \end{subfigure}
    \caption{Comparison of the test accuracy of different methods for real data applications, where the box plot displays the variation across 15 trials.}
    \label{fig:real test accuracy comparison}
\end{figure}

\end{document}